\newcommand{\R}{{\mathbb R}}
\newcommand{\C}{{\mathbb C}}
\newcommand{\be}{\begin{eqnarray}}
\newcommand{\ben}{\begin{eqnarray*}}
\newcommand{\en}{\end{eqnarray}}
\newcommand{\enn}{\end{eqnarray*}}
\newcommand{\ba}{\backslash}
\newcommand{\pa}{\partial}
\newcommand{\ov}{\overline}
\newcommand{\g}{\gamma}
\newcommand{\eps}{\epsilon}
\newcommand{\hx}{\hat{x}}
\newtheorem{theorem}{Theorem}[section]
\newtheorem{lemma}[theorem]{Lemma}
\newtheorem{remark}[theorem]{Remark}
\newtheorem{algorithm}{Data Completion Algorithm}
\newtheorem{notation}[theorem]{Notation}
\begin{document}
\title{\bf Data completion algorithms and their applications in inverse acoustic scattering with limited-aperture backscattering data}
\author{Fangfang Dou\thanks{School of Mathematical Sciences, University of Electronic Science and Technology of China, Chengdu 611731, China. Email: fangfdou@uestc.edu.cn }
\and
Xiaodong Liu\thanks{Academy of Mathematics and Systems Science,
Chinese Academy of Sciences, Beijing 100190, China. Email: xdliu@amt.ac.cn }
\and
Shixu Meng\thanks{Academy of Mathematics and Systems Science, Chinese Academy of Sciences,
Beijing 100190, China. Email: shixumeng@amss.ac.cn }
\and
Bo Zhang\thanks{Academy of Mathematics and Systems Science, Chinese Academy of Sciences,
Beijing 100190, China and School of Mathematical Sciences, University of Chinese Academy of Sciences,
Beijing 100049, China. Email: b.zhang@amt.ac.cn }}
\date{}
\maketitle

\begin{abstract}
We introduce two data completion algorithms for the limited-aperture problems in inverse acoustic scattering. Both completion algorithms are independent of the topological and physical properties of the unknown scatterers. The main idea is to relate the limited-aperture data to the full-aperture data via the prolate matrix. The data completion algorithms are simple and fast since only the approximate inversion of the prolate matrix is involved. We then combine the data completion algorithms with imaging methods such as factorization method and direct sampling method for the object reconstructions.
A variety of numerical examples are presented to illustrate the effectiveness and robustness of the proposed algorithms.

\vspace{.2in}
{\bf Keywords:} inverse acoustic scattering; limited aperture; backscattering data; data completion; direct sampling method; factorization method;

\vspace{.2in} {\bf AMS subject classifications:}
35P25, 45Q05, 78A46, 74B05

\end{abstract}

\section{Introduction}
The inverse scattering problems aim to determine the unknown objects from the measured wave fields. In many cases of practical interest, it is difficult or even impossible to obtain the {\em full-aperture} measurements all around the unknown objects, this motivates us to consider the so-called {\em limited-aperture} problems. The {  limited-aperture} problems are fundamental in applications such as radar, sonar, geophysical exploration, medical imaging and nondestructive testing. Various reconstruction algorithms have been developed
\cite{AhnJeonMaPark, BaoLiu,ChengPengYamamoto2005IP, ColtonMonk06, IkehataNiemiSiltanen, KirschGrinberg,L4,LuXuXu2012AA,MagerBleistein,MagerBleistein1978,Robert1987, Zinn1989} using the {limited-aperture} data directly.
{Although uniqueness of the inverse scattering problems can be proved in some cases \cite{CK}}, the performance of the reconstruction algorithms is not entirely satisfactory.
A typical feature is that the ``shadow region'' is elongated in down range \cite{ColtonMonk06,L4}.
Physically, the information from the ``shadow region'' is very weak, especially for high frequency waves \cite{MagerBleistein}.
For the two-dimensional problems, the numerical
experiments of the decomposition methods  in \cite{Robert1987,Zinn1989} indicate that satisfactory reconstructions need an aperture not smaller than $180$ degrees.

Other than directly using the {limited-aperture} data, one may first recover the {full-aperture} data  or process the {limited-aperture} data in appropriate ways.
From the perspective of recovering the data, it is numerically difficult to recover the {full-aperture} data since analytic continuation problems are severely ill-posed in general \cite{Atkinson}. It is possible to design stable regularization methods for some specific problems.
We refer to \cite{FuDouFengQian,FuDengFengDou} for  stable regularization methods on analytic continuation to a strip domain with data given only on a line.
We also refer to  \cite{ChengYamamoto1998IP, LuXuXu2012AA} for a conditional stability estimate on a line or an analytic curve.
By considering integral equation methods for solving the inverse scattering problems, one may also look for density functions of layer potentials that generate the measured data approximately by regularization, and then apply the density to obtain the {full-aperture} data \cite{LiuSun19, LuXuXu2012AA}.
From the perspective of processing the data in appropriate ways, matched filters and related filters have been shown to be practical and powerful when dealing with {limited-aperture} data \cite{BORCEA2019556,Borcea_2009,CB2009}. These filters are usually applied to the data before applying imaging methods. It is possible to see that processing the data in appropriate ways  is  the same as  recovering the {full-aperture} data.
Based on propagating modal formulation in a waveguide, \cite{BORCEA2019556} showed how to recover full-aperture data from limited-aperture data and applied such data in the implementation of the linear sampling method, where the idea is to relate the limited-aperture data to the full-aperture data via an ill-conditioned matrix.

In this paper we introduce two data completion algorithms in two dimensions.
In the first algorithm, we represent the full-aperture data in the form of double Fourier series, and find that the corresponding Fourier coefficients are related to the limited-aperture data via two prolate matrices \cite{Slepian78}. In the second algorithm, for each incident direction, we represent the far field data by the single layer potential, and find that the  Fourier coefficients of the density are related to the  limited-aperture data via a prolate matrix (which turns out to be same for every incident direction).
 The data completion algorithms are simple and fast since only the approximate inversion of the prolate matrix is involved. As an application, we then combine the data completion algorithms with imaging methods such as factorization method and direct sampling methods for the object reconstructions.
We remark that both the data completion techniques and the sampling methods are independent of the physical and geometrical properties of the unknown objects. As a final remark, we restrict ourselves to the two dimensional case though extension to three dimensional case is similar yet to be done.

The paper is further organized as follows. In the next section, we introduce the mathematical model for the  inverse acoustic scattering with limited-aperture backscattering data. In Section \ref{section prolate MSR}, we collect properties of the prolate matrix and discuss the backscattering multi-static response matrix to facilitate the data completion algorithms later on.
Section \ref{Data completion} is devoted  to the data completion algorithms.
In Section \ref{DSMs}, we introduce the object reconstruction algorithm by combining the proposed data completion algorithms and the well known sampling methods.
Finally, a variety of numerical examples are presented to illustrate the effectiveness and robustness of the proposed algorithms in Section \ref{NumExamples}.



\section{Mathematical model}
Let $D\subset\R^2$ be an open and bounded domain with Lipschitz boundary $\pa D$ such that the exterior $\R^2\ba\ov{D}$ of $\ov{D}$ is connected. Here and throughout the paper we denote by $\ov{D}$ the closure of the set $D$ of points in $\R^2$. A confusion with the complex conjugate $\ov{z}$ of $z\in\C$ is not expected. Furthermore, let $k>0$ be the wavenumber and $\mathbb{S}:=\{(\cos\theta, \sin\theta)^{\rm T}:\,\theta\in [-\pi, \pi]\}$ denote the unit circle in $\R^2$. The  incident field of our particular interest is the plane wave
\ben
u^{in}(x,\theta_d):=e^{ikx\cdot d},\quad x\in\R^2
\enn
with incident direction $d=(\cos\theta_d, \sin\theta_d)^{\rm T}\in\mathbb{S}$.
The scatterer $D$ gives rise to a scattered field $u^{sc}(x)$ satisfying
\be
\label{HemEquobstacle}\Delta u^{sc} + k^2 u^{sc} = 0\quad \mbox{in }\R^2\ba\ov{D},\\
\label{Dbc} u^{sc} = -u^{in}\quad\mbox{on }\pa D,\\
\label{Srcobstacle}\lim_{r:=|x|\rightarrow\infty}r^{\frac{1}{2}}\left(\frac{\pa u^{sc}}{\pa r}-iku^{sc}\right) =\,0.
\en
The well-posedness of the direct scattering problems \eqref{HemEquobstacle}-\eqref{Srcobstacle} can be found in the standard monograph \cite{CK}.

Note that we have restrict ourselves to the simplest case with the Dirichlet boundary condition \eqref{Dbc} corresponding to a sound-soft obstacle. Boundary conditions other than \eqref{Dbc} can also be considered, for example the Neumann boundary condition
or the impedance boundary condition. The scatterer can also be an inhomogeneous medium. These boundary conditions represent the properties of the underlying scatterer, which are often not known in advance in practical situations. Our analyses and methods in the subsequent sections will be seen to be independent of these physical  properties.

Every radiating solution of the Helmholtz equation has the following asymptotic
behavior at infinity \cite{KirschGrinberg}
\be\label{0asyrep}
u^{sc}(x)
=\frac{e^{i\frac{\pi}{4}}}{\sqrt{8k\pi}}\frac{e^{ikr}}{\sqrt{r}}\left\{u^{\infty}(\theta_{\hat{x}})+\mathcal{O}\left(\frac{1}{r}\right)\right\}\quad\mbox{as }\,r:=|x|\rightarrow\infty,
\en
uniformly with respect to all directions $\hx:=x/|x|\in\mathbb{S}$. Here, we identify $\hx=(\cos\theta_{\hx}, \sin\theta_{\hx})^{\rm T}$ with $\theta_{\hx}\in [-\pi, \pi]$.
The complex valued function $u^{\infty}(\hx)$ defined on $\mathbb{S}$ is known as the scattering amplitude or the far field pattern with $\hx\in\mathbb{S}$ denoting the observation direction.

For the incident plane waves $u^{in}(x,\theta_d)$ we denote the scattered field by $u^{sc}(x,\theta_d)$ and its far field pattern by $u^{\infty}(\theta_{\hx},\theta_d)$. Then the inverse scattering problem we consider in this paper is to identify the obstacle $D$ from the following limited-aperture "backscattering" data

$$
\{u^{\infty}(\theta_{\hx},\theta_d): \quad \theta_{\hx}\in [-\alpha, \alpha],\, \theta_d\in [\pi-\alpha, \pi+\alpha],\, \alpha\in (0, \pi]\}.
$$

\section{Prolate matrix and backscattering multi-static response matrix}\label{section prolate MSR}

In this section, we collect properties of the prolate matrix  which  play an important role in the data completion algorithms. We also discuss the backscattering multi-static response matrix to facilitate the data completion algorithms later on.

\begin{notation}
We are going to work with matrices and vectors with negative indices for notational convenience. For $-N_1\le n\le N_2$, we denote by
$$
b:=\Big(b_n\Big), \quad -N_1\le n\le N_2
$$
as a $N_1+N_2+1$ dimensional vector.

For $-M_1\le m\le M_2$ and $-N_1\le n\le N_2$, we denote by
$$
A:=\Big(A_{mn}\Big), \quad -M_1\le m\le M_2, -N_1\le n\le N_2
$$
as a $(M_1+M_2+1) \times (N_1+N_2+1)$ dimensional matrix.

\end{notation}

\subsection{Prolate Matrix}
\setcounter{equation}{0}
Let us denote
\ben
\phi_m(\theta) := \frac{1}{\sqrt{2\pi}}e^{im\theta}, \quad m=0, \pm1, \pm2,\cdots
\enn
which form a complete orthonormal basis in $L^2(-\pi, \pi)$.

For fixed $\alpha\in (0, \pi]$, we first introduce a $N\times N$ matrix $\mathbb{P}=\mathbb{P}(\alpha)$ whose $mn$-th entry is given by
\be \label{def Prolate Matrix}
\mathbbm{p}_{mn}
&:=& \int_{-\alpha}^\alpha \phi_m(\theta) \overline{\phi_n(\theta)} d \theta \cr
&=& \frac{1}{2\pi}\int_{-\alpha}^\alpha e^{i(m-n)\theta} d \theta \cr
&=&
\bigg\{
\begin{array}{cc}
\frac{\alpha}{\pi},  &  m=n    \\
\frac{\sin((m-n)\alpha)}{\pi (m-n)}, &     m\not=n
\end{array}
.
\en
{In particular, $\mathbb{P}(\pi)$ is the identity matrix.}
Following  \cite{Varah1993}, we conveniently set $\mathbb{P}^\infty$ as the infinite matrix when $N=\infty$. This is the well known prolate matrix which also appears in signal processing \cite{Slepian78,Varah1993}.
We collect some  properties \cite{Slepian78,Varah1993} of the prolate matrix $\mathbb{P}(\alpha)$ in the following lemma.
\begin{lemma}\label{Mproperties}
For $\alpha\in(0,\pi)$, let $\mathbb{P}(\alpha)$ be the  $(2J+1)\times(2J+1)$ prolate matrix, and $\{\sigma_j=\sigma_j(\alpha)\}_{j=-J}^J$ be the eigenvalues of $\mathbb{P}(\alpha)$ numbered in decreasing order with corresponding normalized eigenfunctions $\{\xi_j\}_{j=-J}^J$.
\begin{itemize}
\item $1>\sigma_{-J}>\sigma_{-J+1}>\cdots>\sigma_{J}>0$;
\item Let $\psi(a, b)$ be the number of eigenvalues of $\mathbb{P}(\alpha)$ contained in $[a,b]\subset[0,1]$. Then
     \ben
     \lim_{J\rightarrow \infty}\frac{\psi(a,b)}{2J+1} = \frac{\varphi(a,b)}{2\pi},
     \enn
     where $\varphi(a,b)$ is the measure of the set of $\theta's$ with $a\leq g_{\alpha}(\theta)\leq b$, and
\ben
g_{\alpha}(\theta) := \left\{
                        \begin{array}{ll}
                          1, & \hbox{$0\leq|\theta|\leq \alpha$;} \\
                          0, & \hbox{$\pi-\alpha\leq|\theta|\leq \pi$.}
                        \end{array}
                      \right.
\enn
 In particular, for any $0<a<b<1$, $\varphi(a,b)=0$, i.e., the eigenvalues $\{\sigma_j\}$ cluster near $1$ and $0$;
\item The eigenvalues satisfy the following symmetric property:
            \ben
      \sigma_j(\alpha) = 1-\sigma_{-j}(\pi-\alpha),\quad j=-J,-J+1,\cdots, J;
      \enn
\item Let $N=2J+1$, then as $J$ becomes large,
     \ben
     \sigma_{N}(\alpha) \cong C(N,\alpha)e^{-\g N},
     \enn
     where
     \ben
     \g := \log\left(\frac{\sqrt{2}+\sqrt{1-\cos\alpha}}{\sqrt{2}-\sqrt{1-\cos\alpha}}\right), \quad
     C(N,\alpha) := \sqrt{\pi}2^{9/4}(1-\cos\alpha)^{1/4}(1+\cos\alpha)^{-1/2}N^{1/2}.
     \enn
      Correspondingly, the spectral condition number is
      \ben
      \kappa(\mathbb{P}(\alpha)) \cong \frac{e^{\g N}}{C(N,\alpha)}.
      \enn
\end{itemize}
\end{lemma}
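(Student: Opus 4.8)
The plan is to read off all four items from a single structural identification of the matrix. First I would recognize $\mathbb{P}(\alpha)$ as the compression to the space $V_J=\mathrm{span}\{\phi_{-J},\dots,\phi_J\}$ of the truncation operator $Q_\alpha$ on $L^2(-\pi,\pi)$ that multiplies by the indicator $\mathbbm{1}_{[-\alpha,\alpha]}$: by the computation \eqref{def Prolate Matrix} its $(m,n)$ entry is $\langle Q_\alpha\phi_n,\phi_m\rangle$, so if $P_J$ denotes the orthogonal projection onto $V_J$ then $\mathbb{P}(\alpha)$ is the matrix of $P_JQ_\alpha P_J|_{V_J}$ in the orthonormal basis $\{\phi_m\}$. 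Equivalently, $\mathbb{P}(\alpha)$ is the real symmetric Toeplitz matrix whose entries depend only on $m-n$ and reproduce the Fourier coefficients of the symbol $\mathbbm{1}_{[-\alpha,\alpha]}$.

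For the first item, the identification $\mathbb{P}(\alpha)=P_JQ_\alpha P_J|_{V_J}$ with $0\le Q_\alpha\le I$ gives at once $0\le\mathbb{P}(\alpha)\le I$, hence every $\sigma_j\in[0,1]$. To exclude the endpoints I would use that any eigenvector, being a trigonometric polynomial of degree at most $J$, cannot vanish on a set of positive measure: $\sigma_j=1$ would force the eigenvector to vanish on $\{\alpha<|\theta|\le\pi\}$ and $\sigma_j=0$ would force it to vanish on $(-\alpha,\alpha)$, both impossible for a nonzero polynomial. The simplicity of the spectrum is the one genuinely delicate point at this stage; I would obtain it, following \cite{Slepian78}, from the existence of a symmetric tridiagonal matrix with nonzero off-diagonal entries that commutes with $\mathbb{P}(\alpha)$, together with a Sturm-type sign-change count on the shared eigenvectors that strictly orders the corresponding eigenvalues.

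The second item is a direct application of Szeg\H{o}'s distribution theorem for Hermitian Toeplitz matrices to the symbol $\mathbbm{1}_{[-\alpha,\alpha]}$: for continuous $F$ one has $\frac{1}{2J+1}\sum_j F(\sigma_j)\to\frac{1}{2\pi}\int_{-\pi}^\pi F(g_\alpha(\theta))\,d\theta$ with $g_\alpha$ the symbol, and approximating the indicator of $[a,b]$ by such $F$ yields the stated limit; since the symbol takes only the values $0$ and $1$, the set $\{a\le g_\alpha\le b\}$ is null for $0<a<b<1$, which is the clustering. The third item I would prove by an explicit conjugation: with $E=\mathrm{diag}\big((-1)^m\big)$, the identity $\sin\big((m-n)(\pi-\alpha)\big)=-(-1)^{m-n}\sin\big((m-n)\alpha\big)$ together with $\tfrac{\pi-\alpha}{\pi}=1-\tfrac{\alpha}{\pi}$ gives, entrywise, $E\,\mathbb{P}(\pi-\alpha)\,E=I-\mathbb{P}(\alpha)$. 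Since $E$ is unitary, the spectrum of $\mathbb{P}(\pi-\alpha)$ is $\{1-\sigma_j(\alpha)\}$, and matching the decreasing orderings (the largest eigenvalue of $I-\mathbb{P}(\alpha)$ corresponds to the smallest $\sigma_j(\alpha)$) produces exactly $\sigma_j(\alpha)=1-\sigma_{-j}(\pi-\alpha)$.

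The last item is where the real work lies and is the main obstacle: the soft arguments above give clustering but not rates. The exponential decay constant $\gamma$ and the precise prefactor $C(N,\alpha)$ of the smallest eigenvalue are sharp spectral asymptotics of the truncated Toeplitz (prolate) operator, whose derivation requires the fine analysis of the associated prolate spheroidal functions, or equivalently Widom-type asymptotics for Toeplitz matrices with a jump-discontinuous $\{0,1\}$ symbol. I would import this estimate from \cite{Slepian78,Varah1993}; the condition-number statement $\kappa(\mathbb{P}(\alpha))\cong e^{\gamma N}/C(N,\alpha)$ then follows immediately by dividing the largest eigenvalue (which tends to $1$ by the clustering item) by $\sigma_N(\alpha)$.
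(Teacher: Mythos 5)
Your proposal is correct, but it is worth noting that the paper itself offers no argument at all for this lemma: its entire ``proof'' is the citation to Grenander--Szeg\H{o}, Slepian, and Varah. You, by contrast, actually prove most of the statement. Your identification $\mathbb{P}(\alpha)=P_JQ_\alpha P_J|_{V_J}$ is the right structural observation and cleanly yields $0\le\sigma_j\le 1$, and your endpoint-exclusion argument is sound: if $\sigma_j=1$ the eigenvector (a nonzero trigonometric polynomial of degree $\le J$) would vanish a.e.\ on $\{\alpha<|\theta|\le\pi\}$, and if $\sigma_j=0$ it would vanish a.e.\ on $(-\alpha,\alpha)$, both impossible since such a polynomial has only finitely many zeros. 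Your conjugation identity $E\,\mathbb{P}(\pi-\alpha)\,E=I-\mathbb{P}(\alpha)$ with $E=\mathrm{diag}((-1)^m)$ checks out entrywise (including the diagonal, via $\frac{\pi-\alpha}{\pi}=1-\frac{\alpha}{\pi}$) and gives the symmetry property with the correct index reversal. Your application of Szeg\H{o}'s distribution theorem to the Toeplitz symbol $\mathbbm{1}_{[-\alpha,\alpha]}$ is exactly the mechanism behind the second item, and in fact your formulation quietly repairs the lemma's somewhat garbled definition of $g_\alpha$ (which as stated leaves a gap for $\alpha<\pi/2$ and an overlap for $\alpha>\pi/2$). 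The two points you defer to the references --- simplicity of the spectrum via Slepian's commuting tridiagonal matrix plus a Sturm-type sign-change count, and the sharp exponential asymptotics of $\sigma_N(\alpha)$ with the precise prefactor $C(N,\alpha)$ --- are precisely the two genuinely hard ingredients, and deferring them is consistent with what the paper does for the entire lemma; you are right that the commuting tridiagonal alone does not give simplicity of $\mathbb{P}$'s eigenvalues and that the sign-change ordering is the needed supplement. In short, your route is more self-contained than the paper's (which buys the whole lemma from the literature), at the modest cost of importing the same two deep facts everyone imports.
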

The proof can be found in the classical references, e.g., \cite{GrenanderSzego, Slepian78,Varah1993}. As can be seen from Lemma \ref{Mproperties}, the prolate matrix $\mathbb{P}$ is very ill-conditioned for $\alpha\in (0, \pi)$. This brings a great challenge to compute its inverse, which will be a key step of the subsequent data completion algorithms.

To compute the inverse of $\mathbb{P}$, regularization techniques have to be used. Let the singular value decomposition of the symmetric real-valued matrix $\mathbb{P}$ be
\begin{equation}
\mathbb{P} = \mathbb{U} \Sigma \mathbb{U}^T,
\end{equation}
with $\mathbb{U}\mathbb{U}^T=I$ (where $I$ is the identity matrix) and $\Sigma=\mbox{diag}(\sigma_{-J},\sigma_{-J+1},\cdots,\sigma_{J})$.

We seek to find an approximate inverse of $\mathbb{P}$, denoted by $\mathbb{P}^\dagger:=\mathbb{U} \Sigma^\dagger \mathbb{U}^*$, where $\Sigma^\dagger$ depends on the regularization technique.

\textbf{Regularization I}: we may first propose an approximation of $\mathbb{P}^{-1}$ based on truncated SVD as follows. Let $\sigma_{-J+J_\alpha},\cdots,\sigma_{J}$ be the eigenvalues that are clustered near $0$ (where the positive integer $J_\alpha$ is determined by $J$ and the aperture $(-\alpha,\alpha)$), we set
\begin{equation} \label{data completion TSVD sigma}
\Sigma^\dagger = \mbox{diag} (\sigma^{-1}_{-J},\sigma^{-1}_{-J+1},\cdots,\sigma^{-1}_{-J+J_\alpha-1},0,\cdots,0),
\end{equation}
so that
\begin{equation} \label{data completion TSVD}
\mathbb{P}^\dagger = \mathbb{U} \Sigma^\dagger \mathbb{U}^*.
\end{equation}
Later on, we use $\mathbb{P}^\dagger$ to approximate $\mathbb{P}^{-1}$ in the data completion procedure.

The physical intuition is related to Slepian's spheroidal wave functions: By introducing $\xi_m:=\sum_{j=-J}^JU_{jm} \phi_j$ we can find prolate spheroidal wave functions $\{\xi_m\}_{m=-J}^{J}$ such that
\begin{equation*}
\int_{-\alpha}^\alpha \xi_m(\theta) \overline{\xi_n} (\theta) = \delta_{mn} \sigma_m, \qquad \int_{-\pi}^\pi \xi_m(\theta) \overline{\xi_n} (\theta) = \delta_{mn}, \quad -J\le m,n\le J,
\end{equation*}
where $\delta_{mn}$ denotes the Kronecker delta.
This means that, when $\sigma_m$ is close to $1$, the ``principal energy'' of $\xi_m$ is on the interval $(-\alpha,\alpha)$; and when $\sigma_m$ is close to $0$, the ``principal energy'' of $\xi_m$ is on the interval $(-\pi,\pi)\backslash [-\alpha,\alpha]$. The data completion based on truncated SVD \eqref{data completion TSVD} means that we only use the prolate spheroidal wave functions $\{\xi_m\}_{m=-J}^{-J+J_\alpha-1}$ (corresponding to eigenvalues that are not close to $0$) which have ``enough energies'' over the aperture $(-\alpha,\alpha)$ where we have access to the {limited-aperture} measurements.

\textbf{Regularization II}: The second choice is to consider the regularization such that
\begin{equation} \label{data completion Tikhonov 1}
\mathbb{P}^\dagger = \mathbb{U} \left(\frac{1}{ \sigma_j + \eps}\right) \mathbb{U}^*,
\end{equation}
or the Tikhonov regularization
\begin{equation} \label{data completion Tikhonov 2}
\mathbb{P}^\dagger  = \mathbb{U} \left(\frac{\sigma_j}{\sigma_j^2 +\epsilon}\right) \mathbb{U}^*,
\end{equation}
where $\eps>0$ is a regularization parameter. In this case, we take $\mathbb{P}^\dagger$ to replace $\mathbb{P}^{-1}$. From the point of view of Slepian's spheroidal wave functions, this method attempts to use some information of the spheroidal wave functions with ``small energy'' on the interval $[-\alpha,\alpha]$.

\subsection{Backscattering multi-static response matrix}
The following symmetric property of the backscattering multi-static response (MSR) matrix is helpful in the data completion algorithms. To begin with, let
$$
\theta_{\hat{x}_j}:=(j-1)2\pi/M -\alpha, \qquad \theta_{d_j}:=  (j-1)2\pi/M-\alpha+\pi, \qquad j=1,2,\ldots,M.
$$

The MSR matrix $\mathbb{F}_{full}\in \C^{M\times M}$ is defined as
\be\label{MSR}
\mathbb{F}_{full}
:= \left(
    \begin{array}{cccc}
      u_{11}^\infty\quad u_{12}^\infty\quad \cdots\quad u_{1M}^\infty \\
      u_{21}^\infty\quad u_{22}^\infty\quad \cdots\quad u_{2M}^\infty \\
      \vdots\,\qquad \vdots\,\quad \ddots\,\qquad \vdots \\
      u_{M1}^\infty\,\, u_{M2}^\infty\,\, \cdots\quad u_{MM}^\infty \\
      \end{array}
  \right),
\en
where $u^{\infty}_{ij}=u^\infty(\theta_{\hat{x}_j};\theta_{d_i})$ for $1\leq i, j\leq M$.
\begin{theorem}\label{blocksymmetric}
\be\label{MSR-SYM}
\mathbb{F}_{full} = \mathbb{F}_{full}^{\rm T}.
\en
\end{theorem}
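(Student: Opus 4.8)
The plan is to reduce the claimed matrix symmetry to the classical reciprocity relation for far field patterns. Recall from \cite{CK} that for the scattering problem \eqref{HemEquobstacle}--\eqref{Srcobstacle} the far field pattern, viewed as a function of the observation direction $\hx$ and the incident direction $d$, satisfies the reciprocity relation
\[
u^\infty(\hx, d) = u^\infty(-d, -\hx) \quad \text{for all } \hx, d\in\mathbb{S}.
\]
First I would record this relation and note that it is exactly the tool that interchanges the roles of observation and incident directions. The remaining work is then purely a matter of checking that the backscattering sampling geometry is arranged so that this interchange maps the grid back onto itself.

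The second step is to exploit the specific backscattering configuration. From the definitions
\[
\theta_{\hat{x}_j} = (j-1)2\pi/M - \alpha, \qquad \theta_{d_j} = (j-1)2\pi/M - \alpha + \pi,
\]
one reads off immediately that $\theta_{d_j} = \theta_{\hat{x}_j} + \pi$ for every $j$, so that the incident and observation directions sharing the same index are antipodal:
\[
d_j = -\hx_j, \qquad j = 1,2,\ldots, M.
\]
This antipodal coupling, built into the choice of the sampling angles, is the structural feature of the backscattering setup that will make reciprocity close up into matrix symmetry.

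Finally I would combine the two ingredients. Fixing indices $i$ and $j$, the $(i,j)$ entry is $u^\infty_{ij} = u^\infty(\hx_j, d_i)$. Applying reciprocity gives $u^\infty(\hx_j, d_i) = u^\infty(-d_i, -\hx_j)$, and substituting $-d_i = \hx_i$ and $-\hx_j = d_j$, both from the antipodal relation, yields
\[
u^\infty_{ij} = u^\infty(\hx_j, d_i) = u^\infty(\hx_i, d_j) = u^\infty_{ji}.
\]
Since this holds for arbitrary $i,j$, every entry coincides with its transpose and hence $\mathbb{F}_{full} = \mathbb{F}_{full}^{\rm T}$.

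I do not anticipate a genuine obstacle: once the antipodal relation $d_j = -\hx_j$ is noticed, the statement is a bookkeeping consequence of reciprocity. The only point demanding care is tracking the sign conventions and the $+\pi$ shift accurately, so as to verify that $-d_i$ and $-\hx_j$ land back on the sampling grid precisely as $\hx_i$ and $d_j$; it is exactly this alignment, encoded in the definitions of $\theta_{\hat{x}_j}$ and $\theta_{d_j}$, that produces the symmetry rather than a mere permutation of entries.
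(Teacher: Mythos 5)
Your proof is correct and is essentially the paper's own argument: the paper likewise reduces the claim to the reciprocity relation, quoted there in angular form as $u^\infty(\theta_{\hat{x}_j};\theta_{d_i})= u^\infty(\theta_{d_i}+\pi;\theta_{\hat{x}_j}+\pi)$, which is exactly your $u^\infty(\hx,d)=u^\infty(-d,-\hx)$ combined with the backscattering grid alignment $\theta_{d_j}=\theta_{\hat{x}_j}+\pi$. Your write-up just makes explicit the antipodal bookkeeping $-d_i=\hx_i$, $-\hx_j=d_j$ that the paper's one-line proof leaves implicit.
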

\begin{proof}
This follows directly by the well known reciprocity relation \cite{CK}
\ben
u^\infty(\theta_{\hat{x}_j};\theta_{d_i})= u^\infty(\theta_{d_i}+\pi;\theta_{\hat{x}_j}+\pi).
\enn
This completes the proof.
\end{proof}

Assume that only {limited-aperture} far field pattern can be measured. The measured data corresponds to a sub-matrix of $\mathbb{F}_{full}$
\be\label{MSR-l}
\mathbb{F}^{(L)}_{limit}
:= \left(
    \begin{array}{cccc}
      u_{11}^\infty\, u_{12}^\infty\, \cdots\, u_{1L}^\infty \\
      u_{21}^\infty\, u_{22}^\infty\, \cdots\, u_{2L}^\infty \\
      \vdots\,\quad \vdots\,\quad \ddots\,\quad \vdots \\
      u_{L1}^\infty\, u_{L2}^\infty\, \cdots\, u_{LL}^\infty \\
      \end{array}
  \right), \quad 1\leq L<M.
\en
By partitioning the $M$-by-$M$ MSR matrix $\mathbb{F}_{full}$ to a $2$-by-$2$ block matrix
\be\label{Fpartition}
\mathbb{F}_{full}=\left(
             \begin{array}{cc}
               \mathbb{F}_{11} & \mathbb{F}_{12} \\
               \mathbb{F}_{21} & \mathbb{F}_{22} \\
             \end{array}
           \right),
\en
where $\mathbb{F}_{11}=\mathbb{F}^{(L)}_{limit}$. We shall discuss how to recover $\mathbb{F}_{full}$ from $\mathbb{F}^{(L)}_{limit}$ in the next section.

\section{Data completion}\label{Data completion}

\subsection{Data completion based on Fourier series}
In this subsection, we introduce the first data completion algorithm. The idea is to represent the full-aperture data in the form of double Fourier series, and to relate the corresponding Fourier coefficients to the limited-aperture data via two prolate matrices.

\vspace{1\baselineskip}

\noindent\textit{Limited-aperture data using Fourier basis}: For the limited-aperture backscattering far field measurements $u^{\infty}(\theta_{\hx},\theta_d)$ with
$\theta_{\hx}\in [-\alpha, \alpha],\, \theta_d\in [\pi-\alpha, \pi+\alpha]$, we introduce the the infinite dimensional matrix $B^\alpha_\infty$ with $pq-$entry given by $b^\alpha_{pq}$
\begin{equation} \label{def B alpha}
b_{pq}^{\alpha}:=\int_{-\alpha}^\alpha\int_{\pi-\alpha}^{\pi+\alpha} u^{\infty} (\theta_{\hx},\theta_d) \ov{\phi_p(\theta_{\hx}) \phi_q(\theta_d)}ds(\theta_{\hx})ds(\theta_{d}),\qquad p,q = 0, \pm 1,\cdots,
\end{equation}
here we recall that the Fourier basis is given by $\phi_m(\theta) = \frac{1}{\sqrt{2\pi}}e^{im\theta}, \quad m=0, \pm1, \pm2,\cdots$.

\vspace{1\baselineskip}

\noindent\textit{Full-aperture data using Fourier basis}: For the full-aperture backscattering far field measurements $u^{\infty}(\theta_{\hx},\theta_d)$, we introduce  the infinite dimensional matrix $B_\infty$ with $pq-$entry given by $b_{pq}$
\begin{equation} \label{def B}
b_{pq} :=\int_{-\pi}^{\pi}\int_{-\pi}^{\pi} u^{\infty}(\theta_{\hx},\theta_d)\ov{\phi_p(\theta_{\hx}) \phi_q(\theta_d)}ds(\theta_{\hx})ds(\theta_{d}), \qquad p,q = 0, \pm 1,\cdots.
\end{equation}
The full-aperture backscattering far field measurements in the  Fourier basis correspond to the infinite dimensional matrix $B_\infty$.

Furthermore, given the knowledge of $B_\infty$, we can write down $u^{\infty}$ in (double) Fourier series as
\begin{equation}  \label{B to far field infinite}
u^{\infty}(\theta_{\hx},\theta_d) = \sum_{m=-\infty}^{\infty} \sum_{n=-\infty}^{\infty}b_{mn}\phi_m(\theta_{\hx}) \phi_n(\theta_d), \quad \theta_{\hat{x}},\theta_d \in [-\pi,\pi],
\end{equation}
and approximate $u^{\infty}$ using a truncated Fourier series as
\begin{equation} \label{B to far field finite}
u^{\infty}(\theta_{\hx},\theta_d) \approx \sum_{m=-J}^{J}\sum_{n=-J}^{J}b_{mn}\phi_m(\theta_{\hx}) \phi_n(\theta_d), \quad \theta_{\hx},\theta_d  \in [-\pi,\pi],
\end{equation}
for $J$ large enough so that the approximation error is sufficiently small in the $L^2$ sense.

\vspace{1\baselineskip}

\noindent\textit{Relation between limited-aperture data and full-aperture data}:
We first derive a relation between the limited-aperture data and full-aperture data as follows.
\begin{lemma} \label{thm far field case partial to full infinite}
Let $B^\alpha_\infty$ and $B_\infty$ be given by \eqref{def B alpha} and \eqref{def B} respectively. It holds that
\begin{equation} \label{far field case partial to full infinite}
B^{\alpha}_\infty = \mathbb{P}_{\hx,\infty }B_\infty \mathbb{P}_{d,\infty },
\end{equation}
where the infinite dimensional matrix $\mathbb{P}_{\hx,\infty }$ is the prolate matrix (with dimension infinity)  given by \eqref{def Prolate Matrix}, and the infinite dimensional matrix $\mathbb{P}_{d,\infty }$ is given by $\mathbb{P}_{d,\infty }:=\Big((-1)^{m-n}\mathbbm{p}_{mn}\Big)$.
\end{lemma}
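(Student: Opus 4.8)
The plan is to establish \eqref{far field case partial to full infinite} entrywise, by substituting the full-aperture Fourier expansion \eqref{B to far field infinite} into the defining integral \eqref{def B alpha} of $b_{pq}^\alpha$ and then recognizing the resulting double sum as a matrix product. First I would write
\begin{equation*}
b_{pq}^{\alpha} = \int_{-\alpha}^\alpha\int_{\pi-\alpha}^{\pi+\alpha} \Big(\sum_{m=-\infty}^\infty\sum_{n=-\infty}^\infty b_{mn}\,\phi_m(\theta_{\hx})\phi_n(\theta_d)\Big)\ov{\phi_p(\theta_{\hx})\phi_q(\theta_d)}\,ds(\theta_{\hx})ds(\theta_d),
\end{equation*}
interchange the summation with the integration, and use the product structure of the integrand to factor the double integral into a product of two one-dimensional integrals, one in $\theta_{\hx}$ and one in $\theta_d$.

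Second, I would evaluate the two one-dimensional integrals separately. The $\theta_{\hx}$-integral is, by the definition \eqref{def Prolate Matrix}, exactly $\int_{-\alpha}^\alpha \phi_m(\theta_{\hx})\ov{\phi_p(\theta_{\hx})}\,d\theta_{\hx} = \mathbbm{p}_{mp}$, which by the symmetry of the real prolate matrix equals $\mathbbm{p}_{pm}$, the $(p,m)$-entry of $\mathbb{P}_{\hx,\infty}$. For the $\theta_d$-integral I would apply the shift $\theta_d = \psi + \pi$ together with the elementary identity $\phi_m(\psi+\pi) = (-1)^m \phi_m(\psi)$, which converts $\int_{\pi-\alpha}^{\pi+\alpha}\phi_n(\theta_d)\ov{\phi_q(\theta_d)}\,d\theta_d$ into $(-1)^{n+q}\int_{-\alpha}^\alpha \phi_n(\psi)\ov{\phi_q(\psi)}\,d\psi = (-1)^{n-q}\mathbbm{p}_{nq}$, recognizing that $(-1)^{n+q}=(-1)^{n-q}$. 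This is precisely the $(n,q)$-entry of $\mathbb{P}_{d,\infty}$.

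Third, collecting these evaluations yields
\begin{equation*}
b_{pq}^{\alpha} = \sum_{m=-\infty}^\infty\sum_{n=-\infty}^\infty \mathbbm{p}_{pm}\, b_{mn}\,(-1)^{n-q}\mathbbm{p}_{nq},
\end{equation*}
and I would identify the right-hand side as the $(p,q)$-entry of the triple product $\mathbb{P}_{\hx,\infty} B_\infty \mathbb{P}_{d,\infty}$, which is the claimed identity.

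The main obstacle is the interchange of the infinite double summation with the integration in the first step; everything after it is routine bookkeeping. I would justify the interchange by observing that the far field pattern $u^\infty$ is real-analytic and hence lies in $L^2([-\pi,\pi]^2)$, so its double Fourier series converges to it in $L^2$, while the map $f \mapsto \int_{-\alpha}^\alpha\int_{\pi-\alpha}^{\pi+\alpha} f\,\ov{\phi_p\phi_q}\,ds(\theta_{\hx})ds(\theta_d)$ is a bounded linear functional on $L^2([-\pi,\pi]^2)$, being integration against the $L^2$ function obtained by restricting $\ov{\phi_p\phi_q}$ to the limited-aperture rectangle. Passing this bounded functional through the $L^2$-limit of the partial sums then legitimizes the term-by-term integration and completes the argument.
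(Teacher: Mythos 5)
Your proof is correct and takes essentially the same route as the paper's: substitute the double Fourier expansion \eqref{B to far field infinite} into \eqref{def B alpha}, factor the double integral, evaluate the $\theta_{\hx}$-integral as $\mathbbm{p}_{pm}$ and the shifted $\theta_d$-integral as $(-1)^{n-q}\mathbbm{p}_{nq}$, and read off $b^\alpha_{pq}=\sum_{m,n}\mathbbm{p}_{pm}\,b_{mn}\,(-1)^{n-q}\mathbbm{p}_{nq}$ as the $(p,q)$-entry of $\mathbb{P}_{\hx,\infty}B_\infty\mathbb{P}_{d,\infty}$, exactly as in the paper's one-line computation. Your explicit $L^2$ justification of the sum--integral interchange is a welcome addition that the paper leaves implicit.
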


\begin{proof}
Assume that the full-aperture measurements are given, then there is the double Fourier series expansion \eqref{B to far field infinite}. Plugging this expression into the definition $\eqref{def B alpha}$  yields
$$
b^\alpha_{pq} = \mathbbm{p}_{mp} b_{mn} \mathbbm{p}_{nq} (-1)^{(n-q)}.
$$
This proves \eqref{far field case partial to full infinite} and completes the proof.
\end{proof}

%
%
%
%

\vspace{1\baselineskip}

\noindent\textit{Finite dimensional case}:
In practice, the measurements are discrete data. This motivates us to consider a finite dimensional space consisting of   $\phi_m(\theta) = \frac{1}{\sqrt{2\pi}}e^{im\theta}, \, m=0, \pm1, \cdots,\pm J$ for a sufficiently large $J$. We shall discuss more details on the choice of $J$ in the foregoing analysis and numerical examples.

The following theorem in the finite dimensional case follows immediately from Lemma \ref{thm far field case partial to full infinite}.
\begin{theorem} \label{thm far field case partial to full finite}
Let $B^\alpha:=\Big(b^\alpha_{pq}\Big)_{-J \le p,q\le J}$ and $B:=\Big(b_{pq}\Big)_{-J \le p,q\le J}$ with $b^\alpha_{pq}$ and $b_{pq}$ given by \eqref{def B alpha} and \eqref{def B} respectively. It holds that
\begin{equation} \label{far field case partial to full finite}
B^{\alpha} = \mathbb{P}_{\hx}B\mathbb{P}_{d},
\end{equation}
where $\mathbb{P}_{\hx}$ is the $(2J+1)\times (2J+1)$ prolate matrix given by \eqref{def Prolate Matrix}, and $\mathbb{P}_{d}$ is the $(2J+1)\times (2J+1)$ matrix   given by $\mathbb{P}_{d}:=\Big((-1)^{m-n}\mathbbm{p}_{mn}\Big)$.
\end{theorem}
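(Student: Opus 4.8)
The plan is to mirror, at the truncated level, the entrywise computation already carried out for the infinite-dimensional identity in Lemma \ref{thm far field case partial to full infinite}. Concretely, I would start from the degree-$J$ truncated double Fourier representation \eqref{B to far field finite} of the far field pattern and substitute it directly into the defining integral \eqref{def B alpha} for $b^\alpha_{pq}$ with $-J \le p,q \le J$. Since the basis functions factorize as $\phi_m(\theta_{\hx})\phi_n(\theta_d)$, the resulting double integral splits into a product of two one-dimensional integrals, one over $\theta_{\hx}\in[-\alpha,\alpha]$ and one over $\theta_d\in[\pi-\alpha,\pi+\alpha]$.

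I would then evaluate the two integrals using the definition \eqref{def Prolate Matrix} of the prolate entries. The observation-direction integral $\int_{-\alpha}^{\alpha}\phi_m(\theta_{\hx})\overline{\phi_p(\theta_{\hx})}\,ds(\theta_{\hx})$ is exactly $\mathbbm{p}_{mp}$. For the incident-direction integral I would perform the shift $\theta_d = \pi + \psi$ with $\psi\in[-\alpha,\alpha]$, so that the factor $e^{i(n-q)\pi}=(-1)^{n-q}$ comes out of the integral and leaves $(-1)^{n-q}\mathbbm{p}_{nq}$. Summing over $-J \le m,n \le J$ yields $b^\alpha_{pq} = \sum_{m,n}\mathbbm{p}_{mp}\,b_{mn}\,(-1)^{n-q}\mathbbm{p}_{nq}$, which, using the symmetry $\mathbbm{p}_{mp}=\mathbbm{p}_{pm}$ and the definition $(\mathbb{P}_d)_{nq}=(-1)^{n-q}\mathbbm{p}_{nq}$, is precisely the $(p,q)$ entry of the matrix product $\mathbb{P}_{\hx}B\mathbb{P}_d$. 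This gives \eqref{far field case partial to full finite}.

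The point that needs care — and the reason the result cannot simply be read off by restricting the infinite identity \eqref{far field case partial to full infinite} to the central $(2J+1)\times(2J+1)$ block — is that the infinite prolate matrices $\mathbb{P}_{\hx,\infty}$ and $\mathbb{P}_{d,\infty}$ are not block-diagonal, so the truncation of the triple product is not the product of the truncations; the modes with $|m|>J$ or $|n|>J$ genuinely couple into each finite entry. The finite identity is therefore exact only to the extent that the truncated series \eqref{B to far field finite} represents $u^\infty$, i.e. under the band-limited approximation $b_{mn}\approx 0$ for $|m|>J$ or $|n|>J$. I would justify this by invoking the analyticity of the far field pattern, which forces its double Fourier coefficients to decay super-algebraically, so that choosing $J$ large makes the truncation error negligible in the $L^2$ sense, as already anticipated in the discussion following \eqref{B to far field finite}. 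Under this truncation the computation above is exact and the stated matrix identity follows; quantifying the $J$-dependent truncation error is the only genuinely nontrivial ingredient, and it is the part I expect to be the main obstacle, to be controlled in the subsequent analysis and numerics.
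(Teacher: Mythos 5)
Your computation is correct and is essentially the paper's own argument: the paper proves Lemma \ref{thm far field case partial to full infinite} by exactly the entrywise substitution you perform (including the shift $\theta_d=\pi+\psi$ that extracts the factor $(-1)^{n-q}$ from the incident-direction integral), and then simply declares that the finite-dimensional statement ``follows immediately.'' If anything you are more scrupulous than the paper: your observation that the identity \eqref{far field case partial to full finite} is exact only under the band-limited truncation \eqref{B to far field finite} --- since the infinite prolate matrices couple the modes with $|m|>J$ or $|n|>J$ into the central $(2J+1)\times(2J+1)$ block, so the truncation of the triple product is not the product of the truncations --- is a genuine caveat that the paper's unqualified equality glosses over, and invoking the rapid decay of the double Fourier coefficients of the analytic far field pattern is the right way to control the resulting truncation error.
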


\vspace{1\baselineskip}

\noindent\textit{From limited-aperture data to full-aperture data}:
Now it is clear that the limited-aperture data is related to the full-aperture data via  \eqref{far field case partial to full finite}. Our goal is then to find $B$ or its approximation from $B^\alpha $ via \eqref{far field case partial to full finite}. From the properties of the prolate matrix in Lemma \ref{Mproperties}, we have that the eigenvalues of $\mathbb{P}_{\hx}$ (and $\mathbb{P}_{d}$) are all positive, but they are clustered near $1$ and $0$, and hence the matrix $\mathbb{P}_{\hx}$ (and $\mathbb{P}_{d}$) is severely ill-conditioned. In fact the eigenvalues decay exponentially to $0$ when $J$ becomes large. Therefore we can only hope to invert $\mathbb{P}_{\hx}$ (and $\mathbb{P}_{d}$) using regularization techniques in order to find $B$ from $B^\alpha$. We shall apply
Regularizations I-II \eqref{data completion TSVD}-\eqref{data completion Tikhonov 2}
to find approximate inverses of $\mathbb{P}_{\hx}$ and $\mathbb{P}_{d}$.


Now we are ready to summarize the first data completion algorithm named by {\bf DC-FS}, which is based on the Fourier series.

\begin{algorithm}\label{algorithm 1}
{\bf (DC-FS)}:
\begin{itemize}
\item Step I: Compute $B^{\alpha}=\Big(b_{pq}^{\alpha}\Big)$ from the measurements $\{u^{\infty}(\theta_{\hx},\theta_d): \, \theta_{\hx}\in [-\alpha, \alpha],\, \theta_d\in [\pi-\alpha, \pi+\alpha]\}$ by \eqref{def B alpha}.
\item Step II: Approximate $B$ by $ \mathbb{P}_{\hx}^{\dagger} B^\alpha \mathbb{P}_{d}^{\dagger}$, where $\mathbb{P}^\dagger_{\hx}$ (resp. $\mathbb{P}_{d}^{\dagger}$) is the approximate inverse of $\mathbb{P}$ (resp. $\mathbb{P}_{d}$) using
Regularizations I-II \eqref{data completion TSVD}-\eqref{data completion Tikhonov 2}.
\item Step III: Recover the full-aperture data by \eqref{B to far field finite}.
\end{itemize}
\end{algorithm}

\subsection{Data completion by solving integral equations}
This subsection is devoted to a different data completion algorithm by solving an integral equation and the symmetric structure of the MSR matrix. Precisely, for each incident direction, we represent the far field data by the single layer potential, and find that the  Fourier coefficients of the density are related to the  limited-aperture data via a prolate matrix (which turns out to be same for every incident direction).

In a recent paper \cite{LiuSun19}, the authors introduce a data completion algorithm based on solving the following integral equation
\be\label{uinf-partial}
u^{\infty}(\theta_{\hx}) = \int_{\pa B_r}e^{-ik \hx\cdot y}\bbphi(y)ds(y),\quad  \theta_{\hx}\in [-\alpha,\alpha]\subset [-\pi,\pi].
\en
with a density $\bbphi\in L^2(\pa B_r)$. Here, $\hx=(\cos\theta_{\hx}, \sin\theta_{\hx})^{\rm T}$, $B_r$ is a ball large enough such that the unknown object $\ov{D}\subset B_r$.
The idea is first to compute a regularized solution $\bbphi$ of the equation \eqref{uinf-partial} with limited-aperture data on $[-\alpha,\alpha] $, and to insert the regularization $\bbphi$  into \eqref{uinf-partial} to obtain the full aperture data on $[-\pi,\pi]$. We introduce here a novel technique to obtain an approximate solution $\bbphi$ of the boundary integral equation \eqref{uinf-partial}. To begin with, we set the polar coordinates $y=r(\cos\theta_y,\sin\theta_y)^{\rm T}$ for any $y \in \partial B_r$.

\begin{lemma} \label{lemma DC-IE}
Assume that
\be \label{def bbphi}
\bbphi(y)
= \sum_{m=-\infty}^{\infty}\gamma_m(r)\phi_{m}(\theta_y), \quad y\in\pa B_r
\en
solves equation \eqref{uinf-partial}. Let $\mathfrak{B}_m$ be the Bessel functions of order $m$.  Then the infinite dimensional vector $\Gamma_\infty=2\pi((-i)^{m}\mathfrak{B}_{m}(kr)\gamma_m(r))$ is related to the limited-aperture data via

\be\label{CG}
C_\infty = \mathbb{P}_\infty \Gamma_\infty,
\en
where $\mathbb{P}_\infty=\Big(\mathbbm{p}_{mn}\Big)$ is the infinite dimensional prolate matrix and $C_\infty$ is given by the limited-aperture data
\be \label{cgamma}
C_\infty=\Big(c_{n} \Big) :=\left(\int_{-\alpha}^\alpha u^{\infty}(\theta_{\hx})\phi_{-n}(\theta_{\hx})d \theta_{\hx}  \right).
\en
\end{lemma}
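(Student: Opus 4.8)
The plan is to expand the kernel $e^{-ik\hx\cdot y}$ in \eqref{uinf-partial} as a Fourier series in the boundary angle $\theta_y$ via the Jacobi--Anger identity, substitute the ansatz \eqref{def bbphi} for the density, and use orthogonality over the full circle to reduce the integral to a single Fourier series for $u^{\infty}(\theta_{\hx})$ whose coefficients are exactly the entries of $\Gamma_\infty$. Projecting that series against $\phi_{-n}$ over the limited aperture $[-\al,\al]$ will then produce the prolate-matrix entries \eqref{def Prolate Matrix}, giving \eqref{CG}.

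Concretely, I would first parametrize $\partial B_r$ by $y=r(\cos\theta_y,\sin\theta_y)^{\rm T}$, so that $\hx\cdot y=r\cos(\theta_{\hx}-\theta_y)$ and $ds(y)=r\,d\theta_y$. The Jacobi--Anger expansion then gives
\begin{equation*}
e^{-ik\hx\cdot y}=e^{-ikr\cos(\theta_{\hx}-\theta_y)}=\sum_{m=-\infty}^{\infty}(-i)^{m}\,\mathfrak{B}_{m}(kr)\,e^{im\theta_{\hx}}e^{-im\theta_y},
\end{equation*}
where the sign $(-i)^{m}$ (rather than $i^{m}$) comes from $\mathfrak{B}_m(-z)=(-1)^m\mathfrak{B}_m(z)$. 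Inserting this expansion together with \eqref{def bbphi} into \eqref{uinf-partial} and integrating term by term, the relation $\int_{-\pi}^{\pi}e^{i(n-m)\theta_y}\,d\theta_y=2\pi\delta_{mn}$ collapses the double sum to a single one, so that
\begin{equation*}
u^{\infty}(\theta_{\hx})=\sum_{m=-\infty}^{\infty}\big(\Gamma_\infty\big)_m\,\phi_m(\theta_{\hx}),\qquad \big(\Gamma_\infty\big)_m=2\pi(-i)^{m}\mathfrak{B}_{m}(kr)\gamma_m(r);
\end{equation*}
here the factor of $r$ arising from $ds(y)$ is absorbed into the normalization of $\Gamma_\infty$.

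It then remains to compute $c_n$ from \eqref{cgamma}. Using $\phi_{-n}=\overline{\phi_n}$, substituting the single series above, and interchanging summation with the finite-interval integral yields
\begin{equation*}
c_n=\sum_{m=-\infty}^{\infty}\Big(\int_{-\al}^{\al}\phi_m(\theta_{\hx})\overline{\phi_n(\theta_{\hx})}\,d\theta_{\hx}\Big)\big(\Gamma_\infty\big)_m=\sum_{m=-\infty}^{\infty}\mathbbm{p}_{mn}\big(\Gamma_\infty\big)_m .
\end{equation*}
Because the prolate entries are symmetric, $\mathbbm{p}_{mn}=\mathbbm{p}_{nm}$, the right-hand side is precisely the $n$-th component of $\mathbb{P}_\infty\Gamma_\infty$, which establishes \eqref{CG}.

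The routine parts are the constant bookkeeping (the factors $2\pi$, $r$, and $(-i)^m$) and justifying the term-by-term integration, which follows from the uniform convergence of the Jacobi--Anger series on the compact circle together with the $L^2$-convergence of \eqref{def bbphi}. The one conceptual point --- and the real content of the lemma --- is that restricting the projection to the aperture $[-\al,\al]$ turns the trivial diagonal relation valid over the full circle into multiplication by the prolate matrix; this is exactly where the severe ill-conditioning recorded in Lemma \ref{Mproperties} enters the subsequent data-completion procedure.
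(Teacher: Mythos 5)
Your proof is correct and follows essentially the same route as the paper's: the Jacobi--Anger expansion of $e^{-ik\hx\cdot y}$, insertion of the Fourier ansatz \eqref{def bbphi} with orthogonality over the full circle to collapse the double sum into $u^{\infty}(\theta_{\hx})=\sum_m (\Gamma_\infty)_m\,\phi_m(\theta_{\hx})$ on $[-\alpha,\alpha]$, and projection against $\phi_{-n}$ over the limited aperture to produce the prolate entries $\mathbbm{p}_{mn}$. The only differences are cosmetic: you explicitly track the factor $r$ from $ds(y)=r\,d\theta_y$ (which the paper's proof silently drops, consistent with its stated definition of $\Gamma_\infty$), note the symmetry $\mathbbm{p}_{mn}=\mathbbm{p}_{nm}$ needed to write the result as $\mathbb{P}_\infty\Gamma_\infty$, and justify the term-by-term integration, none of which changes the substance.
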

\begin{proof}
Recall the Jacobi-Anger expansion \cite{CK}
\ben
e^{-ik\hx\cdot y} = 2\pi\sum_{m=-\infty}^{\infty}(-i)^{m}\mathfrak{B}_{m}(kr)\phi_{m}(\theta_{\hx})\phi_{-m}(\theta_y),\quad y\in\R^2.
\enn
Inserting the above   expansion and \eqref{def bbphi} into \eqref{uinf-partial}, by the orthogonality of $\phi_{m}(\theta)$,  we have
\be\label{unif-partial2}
u^{\infty}(\theta_{\hx}) = 2\pi\sum_{m=-\infty}^{\infty}(-i)^{m}\mathfrak{B}_{m}(kr)\gamma_m(r)\phi_{m}(\theta_{\hx}),\quad \theta_{\hx}\in [-\alpha,\alpha].
\en
Multiplying \eqref{unif-partial2} by $\phi_{-n}(\theta_{\hx})$ and integrating over $[-\alpha,\alpha]$, we obtain from the definition of $c_n$ \eqref{cgamma} that
\ben
c_{n}= \sum_{m=-\infty}^{\infty}(-i)^{m}\mathfrak{B}_{m}(kr)\gamma_m(r)\mathbbm{p}_{mn}, \quad n=0,\pm1,\cdots,\pm\infty
\enn
with $\Big(\mathbbm{p}_{mn}\Big)$ being the prolate matrix. This proves \eqref{CG} and completes the proof.
 \end{proof}

\vspace{1\baselineskip}

\noindent\textit{Finite dimensional case}:
When only using a finite dimensional space consisting of  $\phi_m(\theta) = \frac{1}{\sqrt{2\pi}}e^{im\theta}, \,m=0, \pm1, \cdots,\pm J$ for a sufficiently large $J$, we have the following truncated version immediately from Lemma \ref{lemma DC-IE}.

\begin{theorem} \label{theorem DC-IE}
Let $\bbphi_J(y)
= \sum_{m=-J}^{J}\gamma_m(r)\phi_{m}(\theta_y)$ be an approximation of the density $\bbphi(y)$ \eqref {def bbphi}. The coefficients $\gamma_m(r)$ satisfies the following truncated form of \eqref{CG}, i.e.
\be\label{CG finite}
C = \mathbb{P} \Gamma,
\en
where $C:=\Big(c_n\Big)_{-J\le n \le J}$, $\mathbb{P}=\Big(\mathbbm{p}_{mn}\Big)_{-J\le m,n \le J}$, and $\Gamma:=2\pi\Big((-i)^{m}\mathfrak{B}_{m}(kr)\gamma_m(r)\Big)_{-J\le m \le J}$.
\end{theorem}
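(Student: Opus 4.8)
The plan is to repeat, almost verbatim, the derivation of Lemma \ref{lemma DC-IE}, but now with the truncated density $\bbphi_J(y)=\sum_{m=-J}^{J}\gamma_m(r)\phi_m(\theta_y)$ in place of the full series. First I would substitute $\bbphi_J$ together with the Jacobi--Anger expansion of $e^{-ik\hx\cdot y}$ into the single-layer representation \eqref{uinf-partial}. Writing $ds(y)$ in the polar variable $\theta_y$ and carrying out the integration over the \emph{full} circle $\theta_y\in[-\pi,\pi]$, the orthonormality of $\{\phi_m\}$ selects, out of the infinite Jacobi--Anger sum indexed by some $m'$, only the modes $m'=m$ actually present in $\bbphi_J$. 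Since $\bbphi_J$ carries no mode with $|m'|>J$, the resulting sum is automatically finite, and I obtain the finite analogue of \eqref{unif-partial2},
\begin{equation*}
u^{\infty}(\theta_{\hx})=2\pi\sum_{m=-J}^{J}(-i)^{m}\mathfrak{B}_{m}(kr)\gamma_m(r)\phi_m(\theta_{\hx}),\qquad \theta_{\hx}\in[-\alpha,\alpha].
\end{equation*}

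Next I would test this identity against $\phi_{-n}$ for each $-J\le n\le J$ and integrate over the limited aperture $[-\alpha,\alpha]$. Using $\overline{\phi_n}=\phi_{-n}$ and the definition \eqref{def Prolate Matrix} of the prolate entries, the $\theta_{\hx}$-integral of $\phi_m\phi_{-n}$ over $[-\alpha,\alpha]$ is exactly $\mathbbm{p}_{mn}$, so that by the definition \eqref{cgamma} of $c_n$,
\begin{equation*}
c_n=\sum_{m=-J}^{J}\mathbbm{p}_{mn}\,\Gamma_m,\qquad \Gamma_m:=2\pi(-i)^{m}\mathfrak{B}_{m}(kr)\gamma_m(r),\qquad -J\le n\le J.
\end{equation*}
Because $\mathbbm{p}_{mn}=\sin((m-n)\alpha)/(\pi(m-n))$ is symmetric in $m$ and $n$, the right-hand side equals $(\mathbb{P}\Gamma)_n$, and collecting the equations for $-J\le n\le J$ gives precisely $C=\mathbb{P}\Gamma$, which is \eqref{CG finite}.

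There is no serious obstacle here; the statement is genuinely a corollary of Lemma \ref{lemma DC-IE}, and the only points requiring a word of care are bookkeeping. The first is the interchange of summation and integration when the infinite Jacobi--Anger series meets the finite density; this is justified by the rapid decay of the Bessel coefficients $\mathfrak{B}_{m}(kr)$ in $m$, so that dominated convergence applies and the two integrations (over $\theta_y$ on the full circle and over $\theta_{\hx}$ on the aperture) may be handled separately. The second is the transpose convention: the prolate matrix enters the $\theta_{\hx}$-integration as $\mathbbm{p}_{mn}$, and it is only the symmetry $\mathbbm{p}_{mn}=\mathbbm{p}_{nm}$ (the even sinc kernel) that lets one read the relation as the matrix--vector product $\mathbb{P}\Gamma$ rather than $\mathbb{P}^{\mathrm T}\Gamma$. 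Neither step involves any new estimate beyond those already invoked for the infinite-dimensional Lemma \ref{lemma DC-IE}.
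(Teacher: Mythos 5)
Your proposal is correct and follows essentially the same route as the paper, which states the theorem as an immediate truncation of Lemma \ref{lemma DC-IE}: you simply rerun the Jacobi--Anger/orthogonality argument with the truncated density $\bbphi_J$ in place of the full series, then test against $\phi_{-n}$ over $[-\alpha,\alpha]$ to produce the prolate entries $\mathbbm{p}_{mn}$, using the symmetry $\mathbbm{p}_{mn}=\mathbbm{p}_{nm}$ to read the result as $C=\mathbb{P}\Gamma$. Your added care about the summation--integration interchange and the transpose convention is sound bookkeeping (and you even restore the factor $2\pi$ that the paper's lemma proof drops in its intermediate display), with the same implicit convention as the paper that $u^{\infty}$ on the aperture is identified with the far field radiated by $\bbphi_J$.
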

After solving for $\Gamma$ (or equivalently $\bbphi_J$) using regularization techniques, then we can insert $\bbphi_J$ into \eqref{uinf-partial} to get an approximation of the full-aperture data.
Now we can summarize the second algorithm named by {\bf DC-IE}, which is obtained with the help of solving an integral equation.
\begin{algorithm} \label{algorithm 2}
{\bf (DC-IE):}
\begin{itemize}
\item Step I: For every incident direction $d \in [\pi-\alpha,\pi+\alpha]$, do:

\begin{itemize}
\item Compute the $2J+1$ dimentional vector $C=\Big(c_m\Big)$ from the measurements $\{u^{\infty}(\theta_{\hx}): \, \theta_{\hx}\in [-\alpha, \alpha]\}$ by \eqref{cgamma}.
\item Find an approximate inverse $\mathbb{P}^{\dagger}$   of $\mathbb{P}$ using Regularizations I-II \eqref{data completion TSVD}-\eqref{data completion Tikhonov 2}.
\item Approximate $2\pi\Gamma$ by $ 2\pi \widetilde{\Gamma}:=\mathbb{P}^{\dagger} C$.
\end{itemize}
Recover the full-aperture data (e.g. $(\mathbb{F}_{11},\mathbb{F}_{12})$) approximately by
$$
u^{\infty} (\theta_{\hx}) \approx \sum_{m=-J}^{J} (2\pi \widetilde{\Gamma})_m \phi_m(\theta_{\hx}),\quad \theta_{\hx} \in [-\pi,\pi].
$$
\item Step II: Take $\mathbb{F}_{21}:=\mathbb{F}_{12}^{\rm T}$.
\item Step III: Repeat Step I for incident directions $d \in \mathbb{S} \backslash [\pi-\alpha,\pi+\alpha]$ to get $\mathbb{F}_{22}$ to complete the algorithm.
\end{itemize}
\end{algorithm}

\begin{remark}
There is another perspective of Data Completion Algorithm \ref{algorithm 2}. Recall again the limited-aperture backscattering far field measurements $u^{\infty}(\theta_{\hx},\theta_d)$ with
$\theta_{\hx}\in [-\alpha, \alpha],\, \theta_d\in [\pi-\alpha, \pi+\alpha]$, we can define for each incident direction $d$ that
\ben
b_{p}^{\alpha}(d):=\int_{-\alpha}^\alpha u^{\infty}(\theta_{\hx},\theta_d) \ov{\phi_p(\theta_{\hx})  }d \theta_{\hx}.
\enn
Performing the same argument as in the proof of Lemma \ref{thm far field case partial to full infinite} and noting the definition of $b_{mn}$ in \eqref{def B}, it follows that
\be \label{CG another perspective}
B^{\alpha}(d) = \mathbb{P} B(d),
\en
where $
B^{\alpha}(d):=\Big(b_{m}^{\alpha}(d)\Big)_{-J\le m\le J}$ and $B(d):=\Big(\sum_{n=-J}^{J}b_{mn}\phi_n(\theta_d)\Big)_{-J\le m\le J}$
respectively.
The relation between the limited-aperture and full-aperture data in \eqref{CG another perspective} is equivalent (up to a termwise scaling) to \eqref{CG finite} derived from solving the integral equations.

\end{remark}

\section{Imaging method for object reconstructions}\label{DSMs}
\setcounter{equation}{0}
After the data completion, many numerical methods using full-aperture data can be applied for reconstructing the shape and location of the underlying objects.
In this paper, we recall two well known non-iterative methods: the factorization method \cite{Kirsch98,KirschGrinberg} and the direct sampling method \cite{LiuIP17}.
%


\vspace{1\baselineskip}

\noindent\textit{Factorization Method} (FM): We introduce the far-field operator $F: L^2(\mathbb{S}) \to L^2(\mathbb{S})$ by
\begin{equation} \label{far field operator}
(F g)({\hx}):= \int_{\mathbb{S}} u^{\infty}({\hx},d) g(d) d s(d), \quad {\hx} \in \mathbb{S}.
\end{equation}
For any sampling point $z$ in a sampling region, the factorization method yields the imaging function
\begin{equation}
I_{FM}(z):= \frac{1}{\|g_z\|^2},
\end{equation}
where $g_z$ is the (regularized) solution to
\be\label{FM}
(F_\sharp^{1/2} g_z)(\hx) = e^{-ik \hx\cdot z}, \quad {\hx} \in \mathbb{S}.
\en
Here $F_\sharp : = |\Re F| + |\Im F|$. Theoretically, apart from some possible wave numbers, the equation \eqref{FM} is solvable if and only if $z\in D$.
This implies that $I_{FM}(z)$ is small for $z\in \mathbb{R}^2\backslash \overline{D}$ and relatively large for $z\in D$ \cite{Kirsch98,KirschGrinberg}.

\vspace{1\baselineskip}

\noindent\textit{Direct Sampling Method} (DSM): For any sampling point $z$ in a sampling region, we consider the following imaging function
\begin{equation}
I_{DSM}(z):= \left| \int_{\mathbb{S}}\int_{\mathbb{S}} u^{\infty}({\hx},d) e^{-ikd\cdot z} e^{ik \hat{x} \cdot z} d s(d) d s(\hx)  \right|.
\end{equation}
The imaging function $I_{DSM}(z)$ is expected to peak when $z\in \partial D$ and decays like the Bessel functions for sampling points away from the boundary. We refer to \cite{LiuIP17} for the corresponding theoretical analysis and its connections with the other sampling methods.

Now we are ready to outline our imaging method for object reconstructions with limited-aperture data.

\medskip

\noindent\textbf{Imaging Algorithm.}
\
\begin{itemize}
\item Data completion: Recover the full-aperture data approximately via Data Completion Algorithm \textbf{DC-FS} or \textbf{DC-IE} from limited-aperture data.
\item Sampling method: Reconstruct the object by the imaging function $I_{FM}$ or $I_{DSM}$.
\end{itemize}

Finally, we remark that both the data completion algorithms proposed in this paper and the sampling methods we considered for the object reconstructions are independent of the a priori information of the unknown objects, which is quite important in many practical applications.

\section{Numerical examples and discussions}\label{NumExamples}
\setcounter{equation}{0}

In this section, we present some numerical examples to illustrate the performance of the data completion and imaging algorithms proposed in the previous sections.
The numerical examples are divided into two groups.
We first present some numerical examples to demonstrate how to use the data completion algorithms to recover the full-aperture data.
The second group of numerical examples is to use the recovered data in the classical factorization method and direct sampling method for imaging.


The boundaries of the objects in our numerical experiments are parameterized as follows (see Fig.~\ref{truedomains}):
\ben
\label{peanut}&\mbox{\rm Peanut:}&\quad x(t)\ =\sqrt{3\cos^2 t+1}(\cos t, \sin t),\quad 0\leq t\leq2\pi,\\
\label{disk}&\mbox{\rm Disk:}&\quad x(t)\ = 2(\cos t, \sin t),\quad 0\leq t\leq2\pi.
\enn
\begin{figure}[htbp]
  \centering
  \subfigure[\textbf{Peanut}]{
    \includegraphics[width=1.4in]{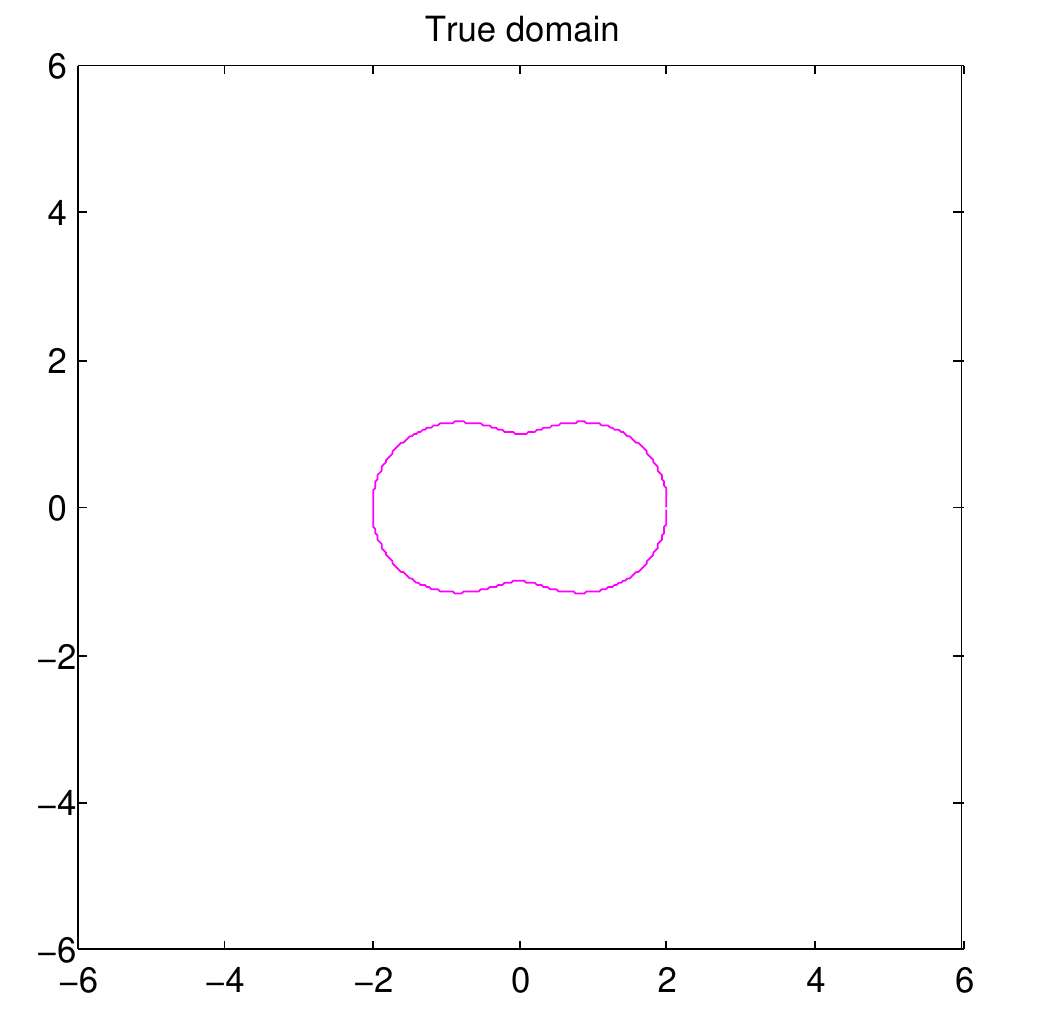}}\quad
  \subfigure[\textbf{Disk}]{
    \includegraphics[width=1.4in]{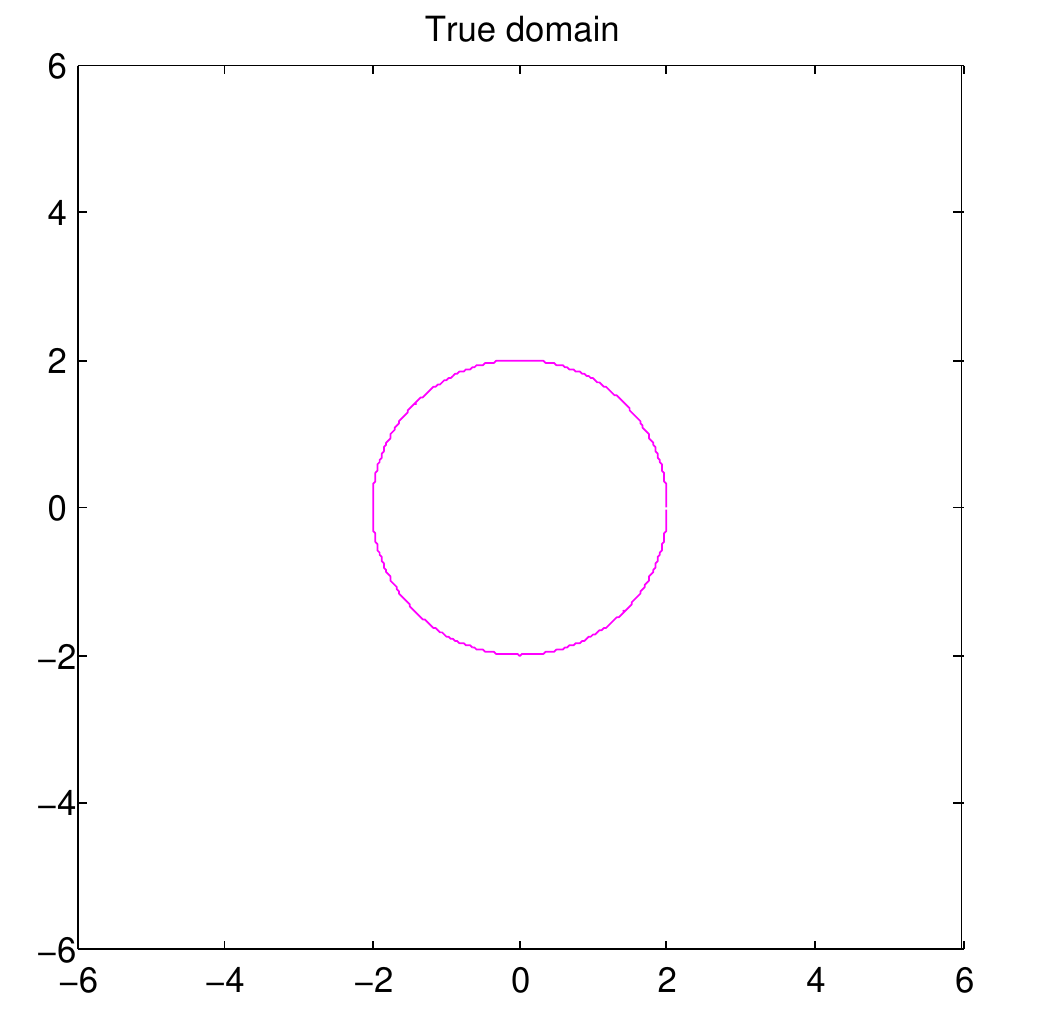}}
\caption{Domains considered. }
\label{truedomains}
\end{figure}

In our simulations, the boundary integral equation method is used to compute the
limited-aperture backscattering far field patterns $u^{\infty}(\theta_{\hx},\theta_d)$
for $L$ equidistantly distributed observation directions  and $L$ equidistantly distributed incident directions over the limited-aperture. This gives the limited-aperture backscattering multi-static response (MSR) matrix $\mathbb{F}^{(L)}_{limit}$ given by \eqref{MSR-l}.
We further perturb $\mathbb{F}^{(L)}_{limit}$ by random noise using
\ben
\mathbb{F}^{(L),\delta}_{limit}\ =\ \mathbb{F}^{(L)}_{limit} +\delta\|\mathbb{F}^{(L)}_{limit}\|\frac{R_1+R_2 i}{\|R_1+R_2 i\|},
\enn
where $R_1$ and $R_2$ are two $L \times L$ matrices containing pseudo-random values
drawn from a normal distribution with mean zero and standard deviation one. The
value of $\delta$ used in our code is $\delta:=\|\mathbb{F}^{(L),\delta}_{limit} -\mathbb{F}^{(L)}_{limit}\|/\|\mathbb{F}^{(L)}_{limit}\|$ which represents the relative error.

In all the subsequent examples, we set the wave number $k=5$ and consider $\delta=5\%$  error level. There are  $L=128$ equidistantly distributed observation directions over the upper half circle.
\\

\subsection{Data completion results and discussions}
This subsection is devoted to verifying the validity of the data completion algorithms {\bf DC-FS} and {\bf DC-IE} proposed in Section \ref{Data completion}.
We take the peanut shaped domain shown in Figure \ref{truedomains} as the unknown object.  To stablize the data completion algorithms, the reconstructed data will be manually set to zero if its magnitude exceeds an appropriately chosen threshold.

To begin with, we illustrate the eigensystem of the prolate matrix. Figure \ref{ProlateEIG} shows the corresponding eigenvalues and five prolate eigenfunctions with $J=39$. Obviously, the eigenvalues are located in $[0,1]$, the first half of them are close to $1$ while the second half are close to $0$. This makes it facile to choose the cut-off value when using TSVD.  The eigenvalues are symmetric with respect to $0.5$.
The prolate eigenfunction $\xi_{30}$, which corresponds to the eigenvalue close to $1$, is almost zero in $(\pi, 2\pi)$.
The prolate eigenfunction $\xi_{50}$, which corresponds to the eigenvalue close to $0$, is almost zero in $(0, \pi)$.
In particular, the prolate eigenfunctions look like dumbbell in their nontrivial parts.
These observations indicate that, to compute the prolate eigenfunction expansion from the partial measurements, it is well-conditioned (resp.  ill-conditioned) to obtain the coefficients of the prolate eigenfunctions with relatively large (resp.  almost zero) eigenvalues.
\\

\begin{figure}[htbp]
  \centering
  \subfigure[\textbf{Eigenvalues}]{
    \includegraphics[width=2in]{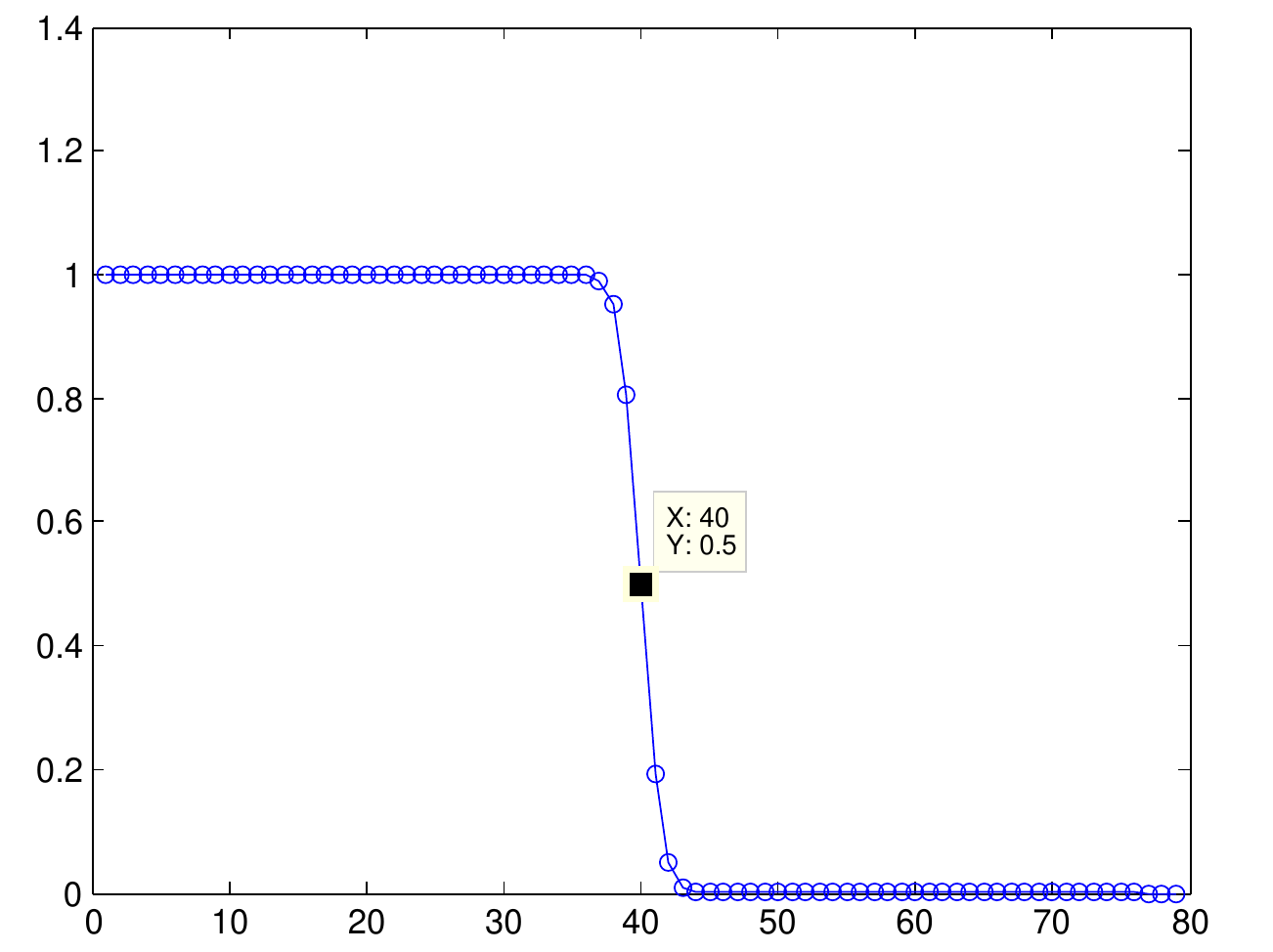}}
  \subfigure[\textbf{$\Re(\xi_{30})$}]{
    \includegraphics[width=2in]{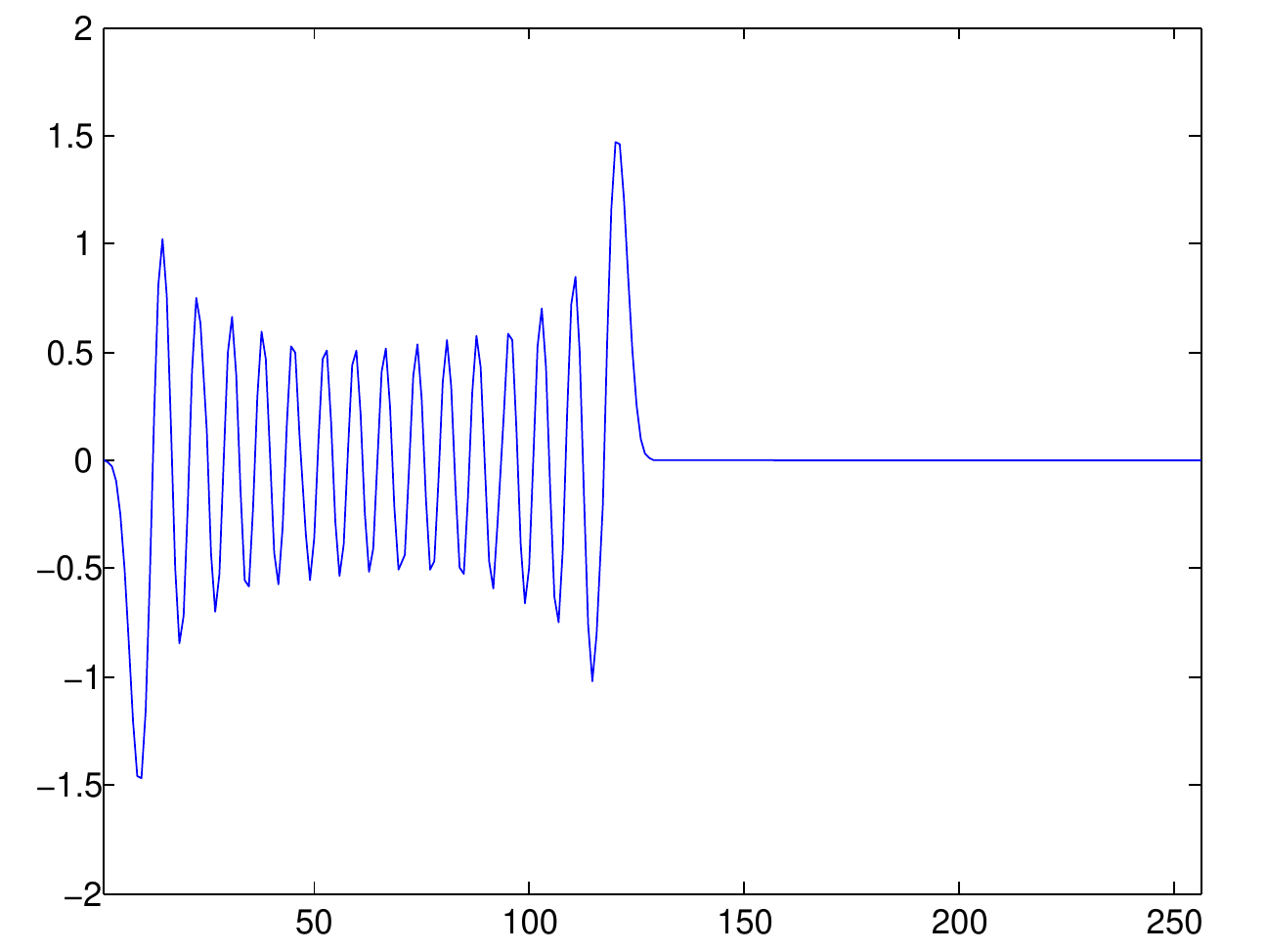}}
  \subfigure[\textbf{$\Re(\xi_{38})$}]{
  \includegraphics[width=2in]{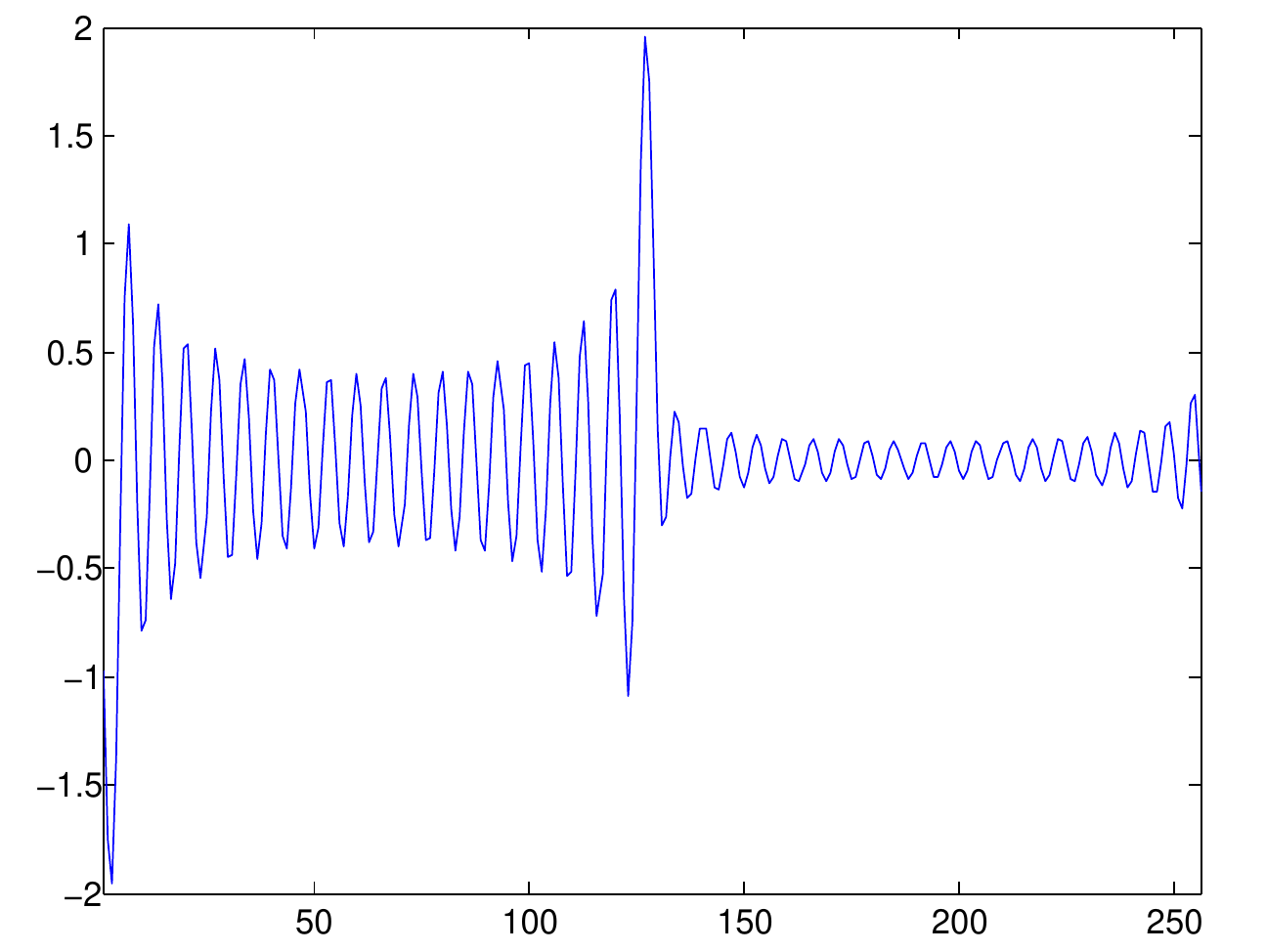}}\\
  \subfigure[\textbf{$\Re(\xi_{40})$}]{
    \includegraphics[width=2in]{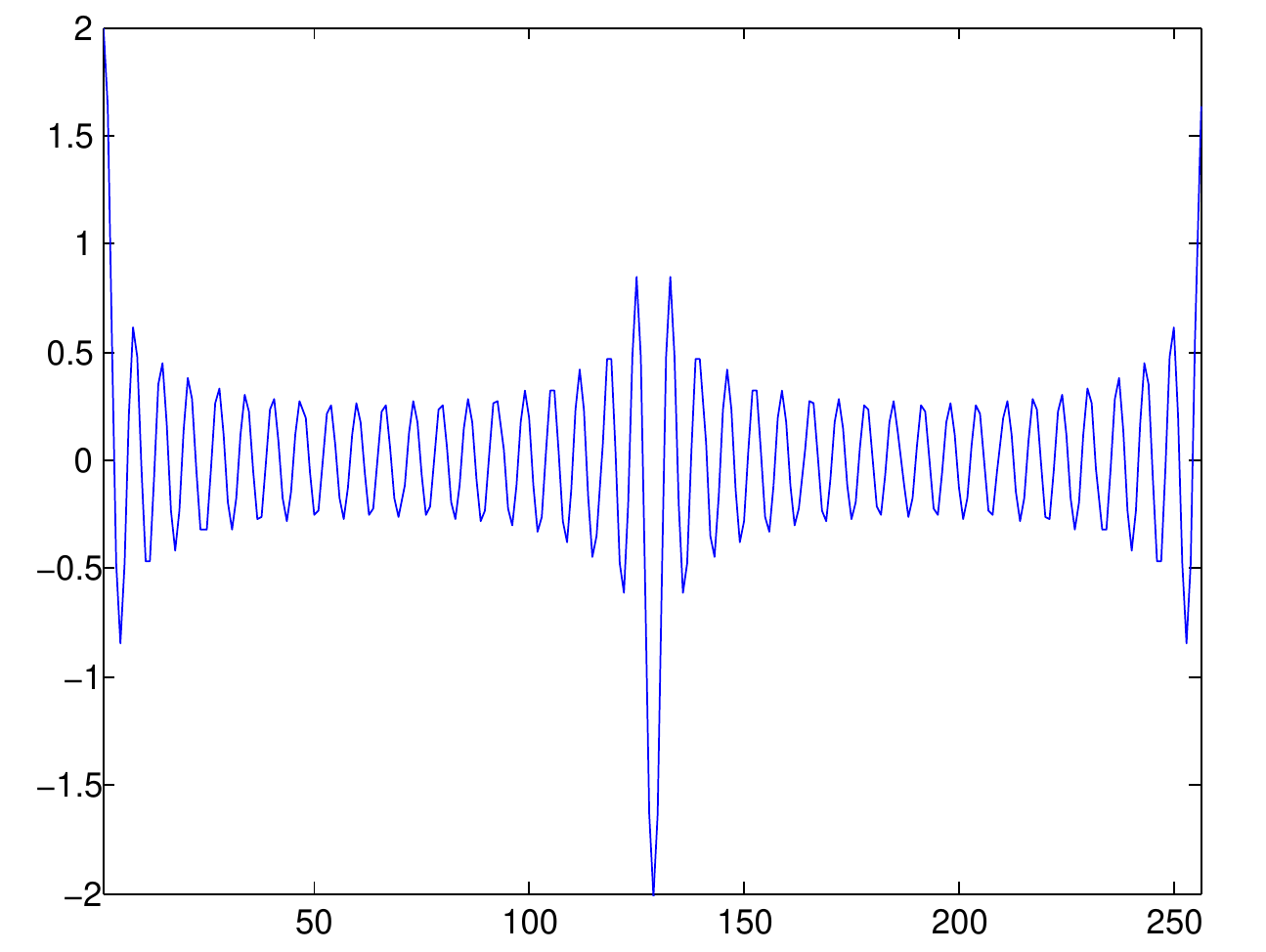}}
  \subfigure[\textbf{$\Re(\xi_{42})$}]{
    \includegraphics[width=2in]{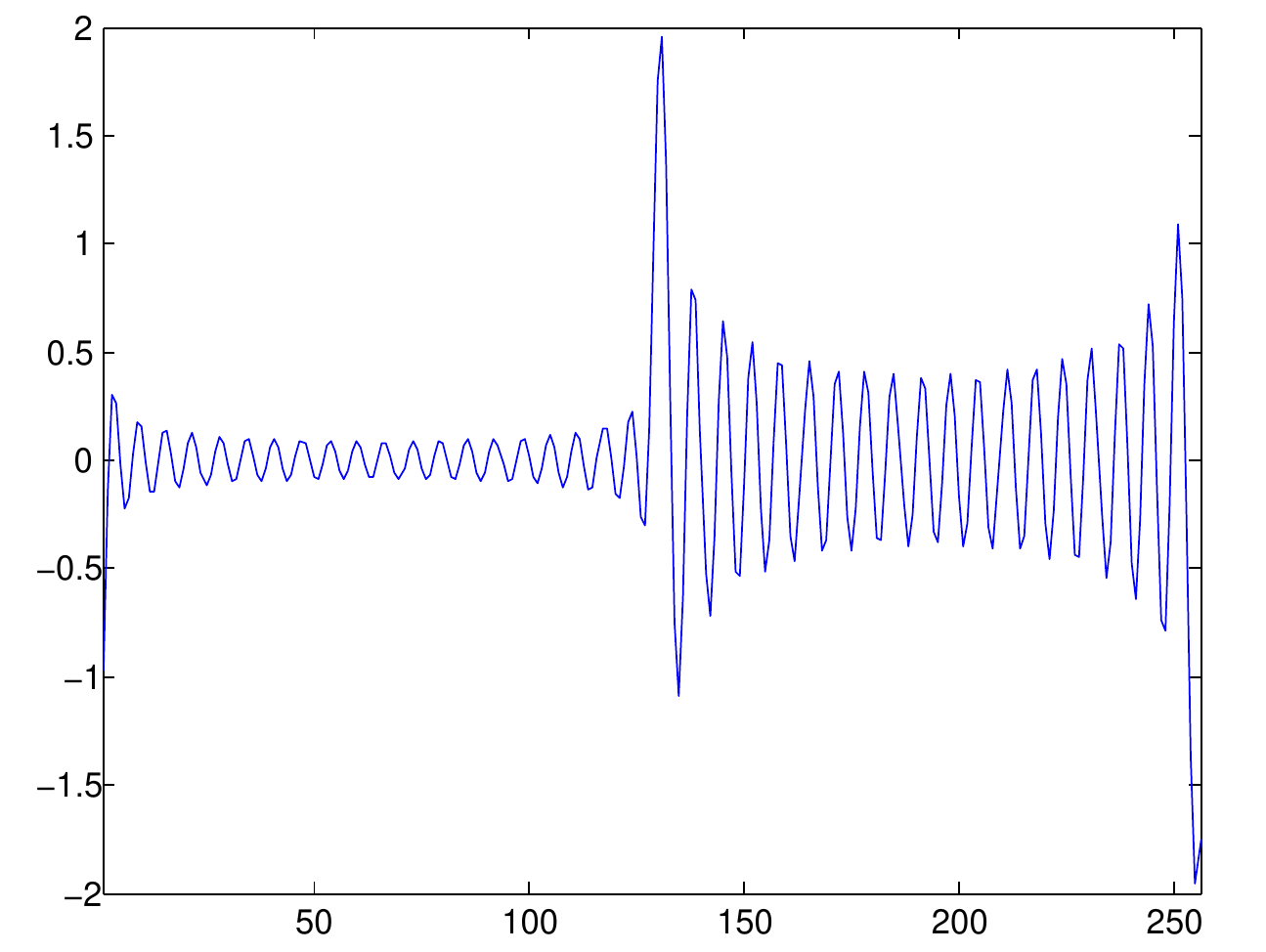}}
  \subfigure[\textbf{$\Re(\xi_{50})$}]{
  \includegraphics[width=2in]{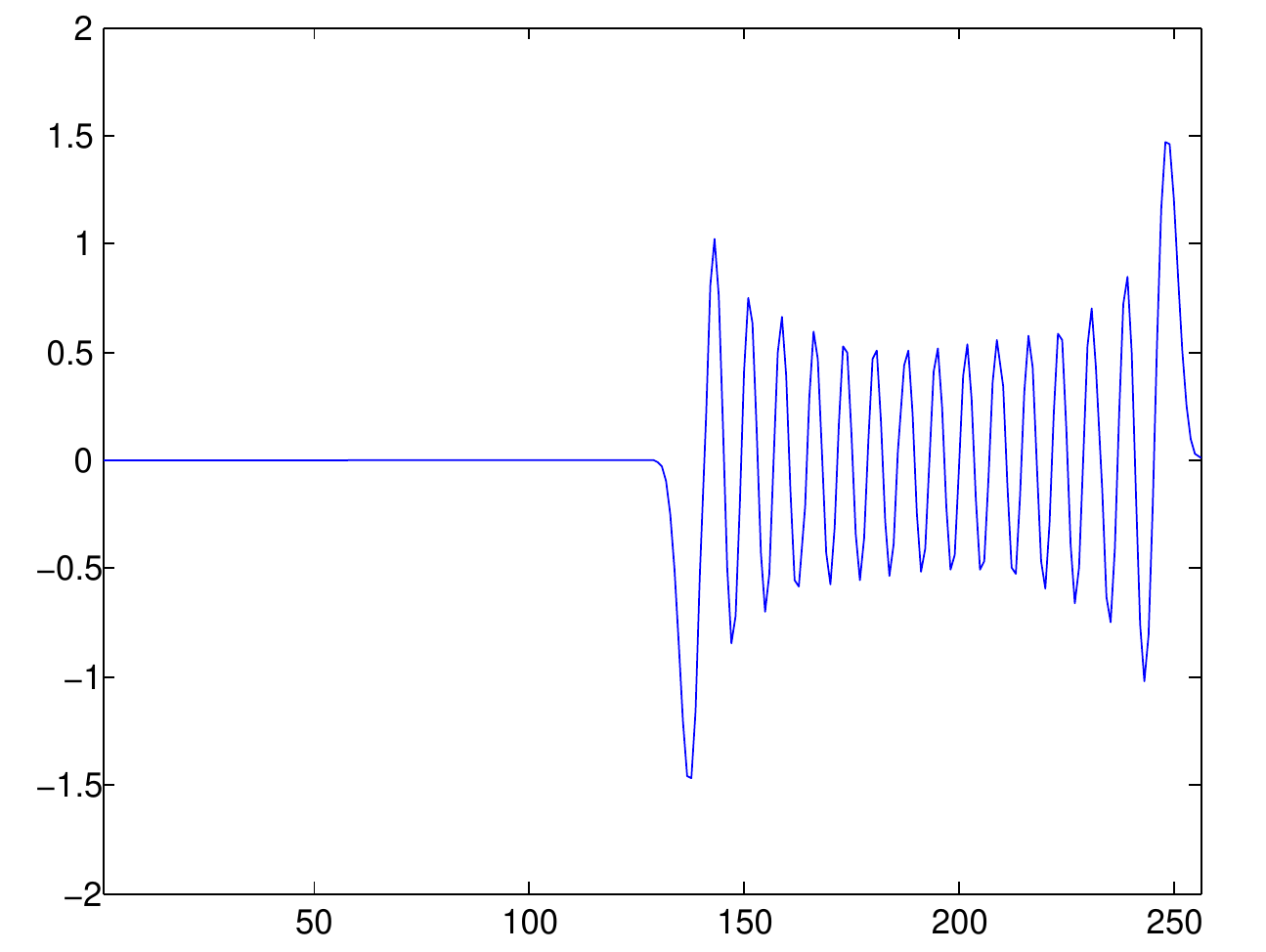}}
\caption{The eigenvalues and eigenfunctions of the prolate matrix with $J=39$: $\xi_m$ is the eigenfunction corresponding to the $m$-th   eigenvalue.}
\label{ProlateEIG}
\end{figure}

Figures \ref{DC-2}-\ref{TSVD-DC-1} show the data completion results using the two proposed data completion algorithms {\bf DC-FS} and {\bf DC-IE}.

Figures \ref{DC-2}-\ref{DC-1} present the results using Regularization II \eqref{data completion Tikhonov 1}   with parameter $\eps = 10^{-3}$. We find that $J=9$ is a sufficiently good choice since the reconstructed data has been matched to the measurements in $[0,\pi]$. However, in the unavailable part
$[\pi, 2\pi]$, the reconstructed data take the form of a dumbbell, which is the result of the dumbbell behavior of the prolate eigenfunctions corresponding to small eigenvalues.

Figures \ref{TSVD-DC-2}-\ref{TSVD-DC-1} present the results using Regularization I \eqref{data completion TSVD}   with cut-off value $\sigma = 0.1$. Due to the truncated SVD, the eigenfunctions corresponding to eigenvalues smaller than $\sigma = 0.1$ are not used. For large $J=39$, as shown in  Figures \ref{TSVD-DC-2}-\ref{TSVD-DC-1}(c)(f), the reconstructed data is small over $(\pi,2\pi)$. This is because that we mostly use the prolate eigenfunctions corresponding to large eigenvalues  and those prolate eigenfunctions are small over $(\pi,2\pi)$, see for example $\Re(\xi_{30})$ in Figure \ref{ProlateEIG}(b).

We also observe that the performance of data completion algorithm {\bf DC-IE} seems a little bit  better compared with the data completion algorithm
{\bf DC-FS}. This may be due to the fact that we have computed the inversion of only one prolate matrix in {\bf DC-IE}.

We make a remark on the choice of $J$. Since there are $L=128$ equidistantly distributed directions (which will be used as quadrature points in the computation of $B^\alpha$ \eqref{def B alpha} or $C$ \eqref{cgamma}) on the limited aperture with length $\pi$, this implies that the frequency of  $e^{iJ\theta}$ cannot be too large in order to ensure a good quadrature approximation of $B^\alpha$ \eqref{def B alpha} or $C$ \eqref{cgamma}. In particular, in the wavelength $\frac{2\pi}{J}$ of  $e^{iJ\theta}$,  the number of quadrature points is $\frac{2\pi}{J} \frac{L}{\pi}=2L/J$. This gives a way to finding a good $J$ in a heuristic way.
This is further illustrated by  Figure \ref{DC-1}.
When $J=39$, one observes that the limited-aperture data on $(0,\pi)$ were not well approximated in  Figure \ref{DC-1} (c), where the  number of quadrature points is approximately $\frac{2\pi}{J} \frac{L}{\pi}=2L/J\approx 6$ which seems not sufficiently enough.


\begin{figure}[htbp]
  \centering
\subfigure[\textbf{DC-FS with $J=4$}]{
    \includegraphics[width=2in]{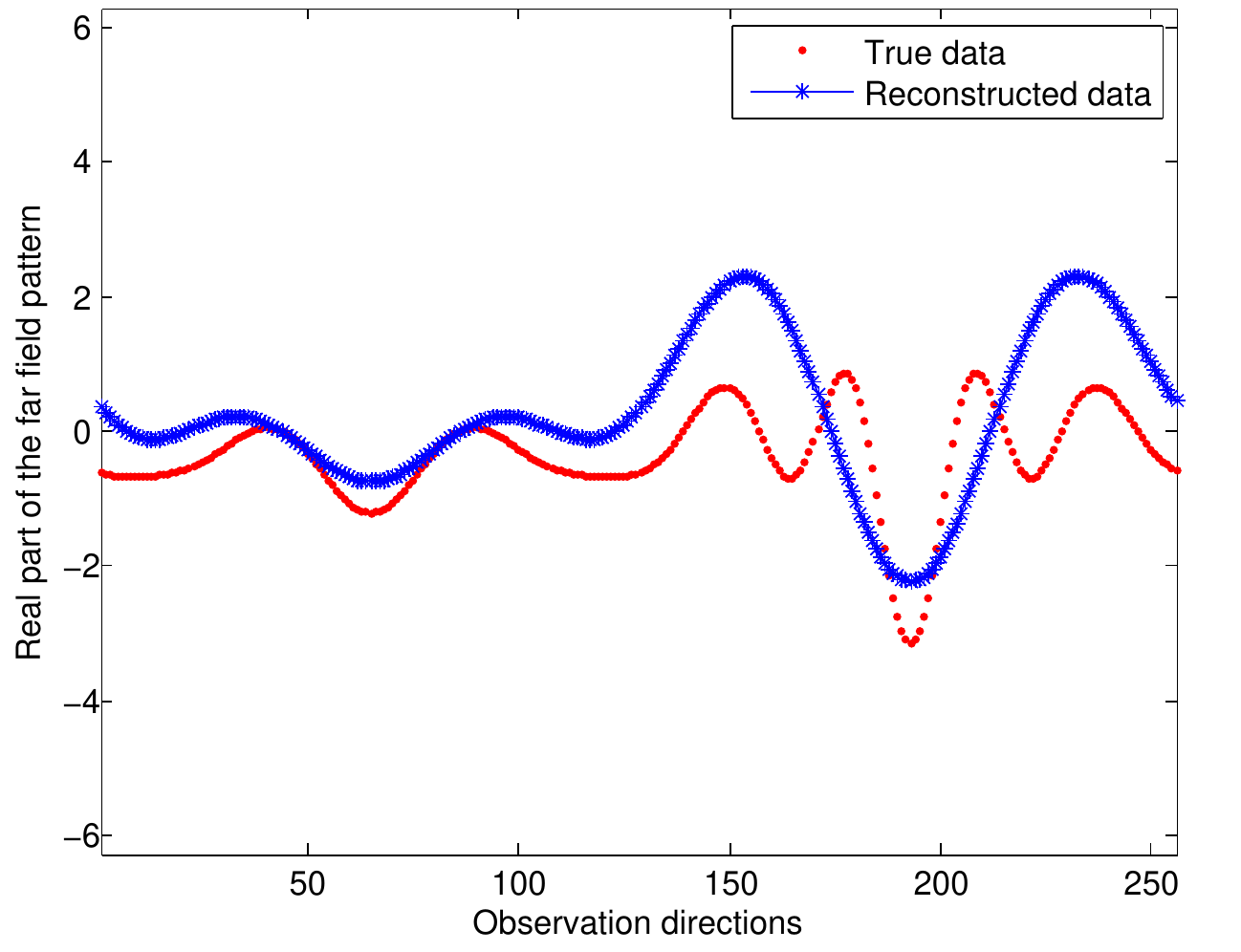}}
  \subfigure[\textbf{DC-FS with $J=9$}]{
    \includegraphics[width=2in]{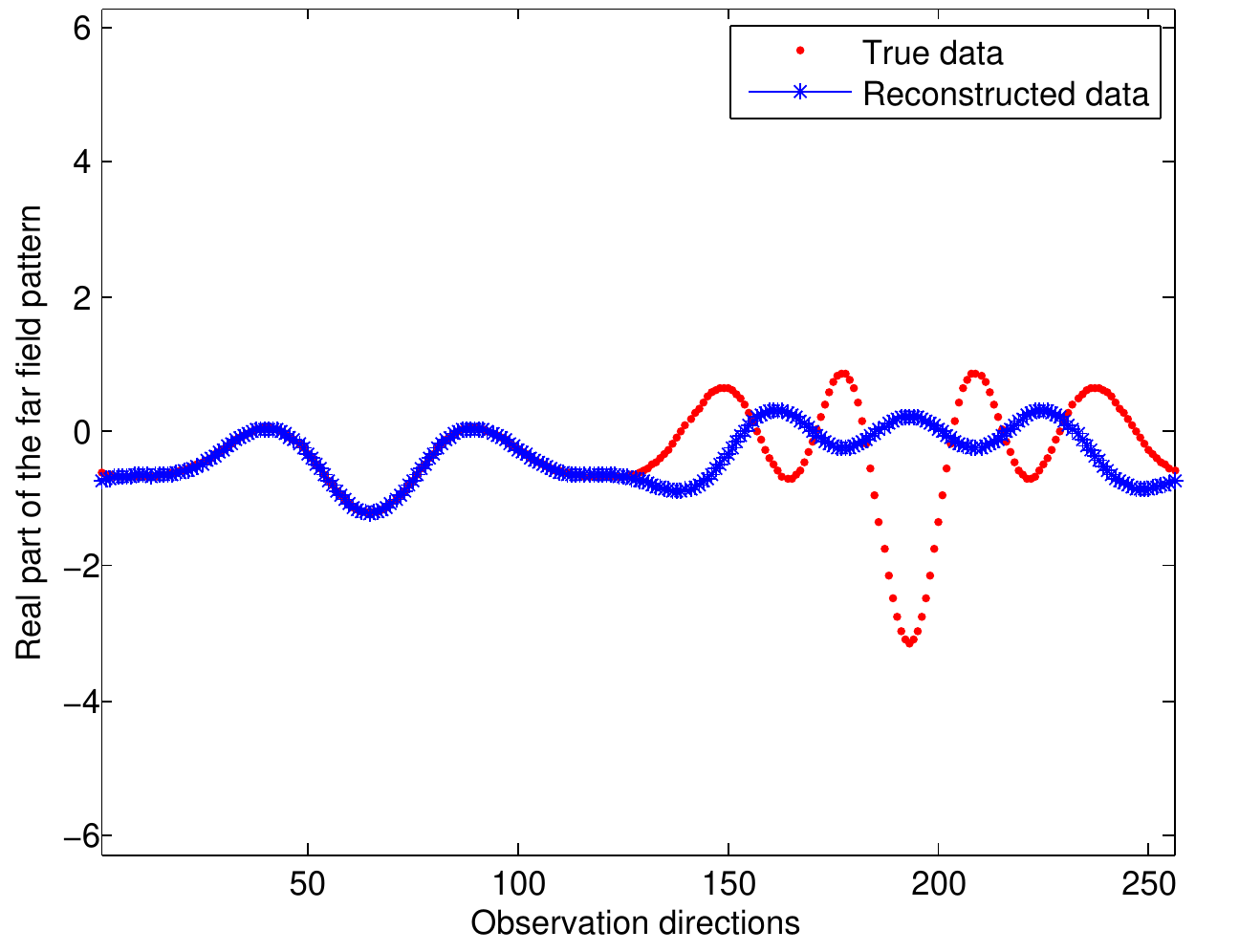}}
  \subfigure[\textbf{DC-FS with $J=39$}]{
    \includegraphics[width=2in]{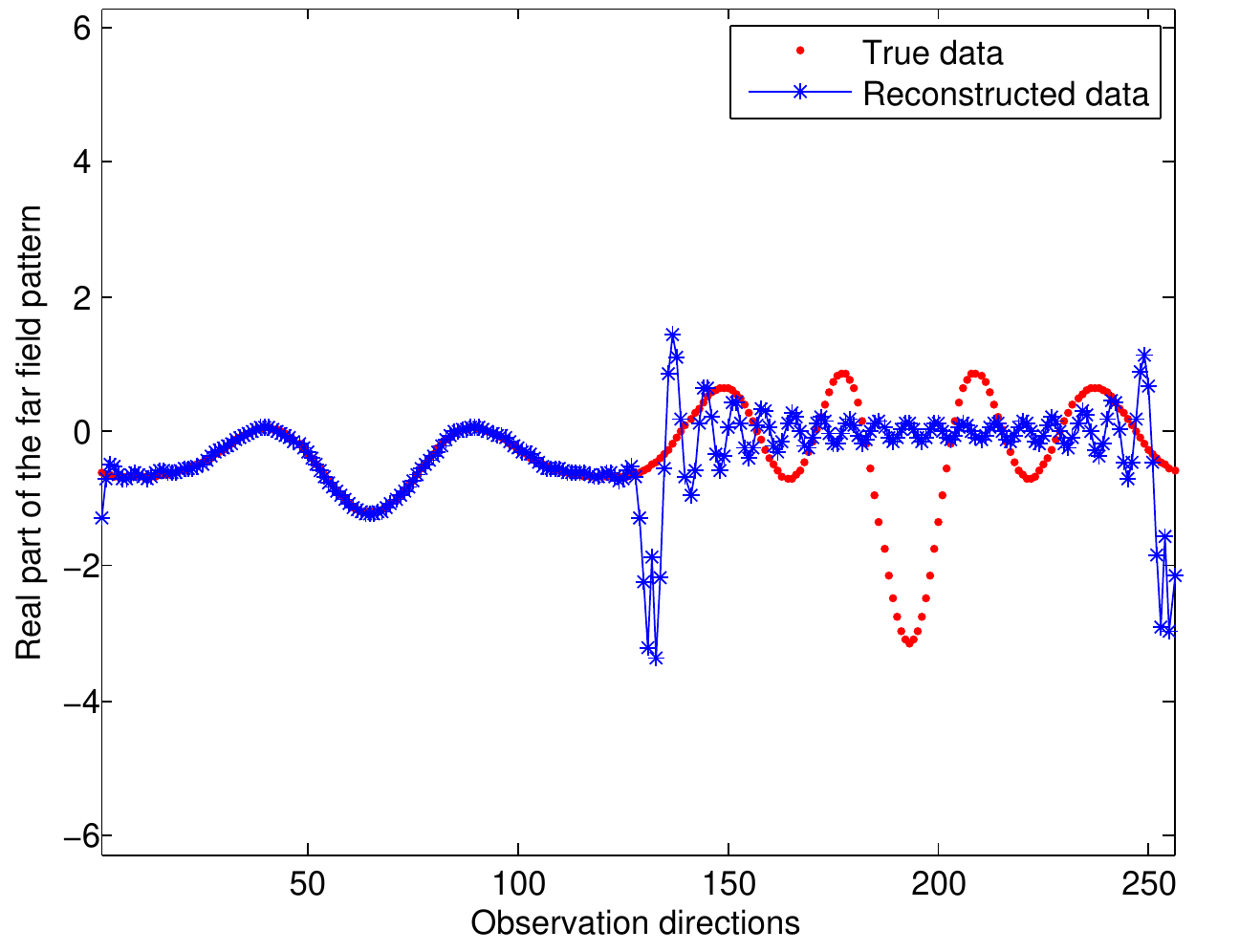}}\\
  \subfigure[\textbf{DC-IE with $J=4$}]{
    \includegraphics[width=2in]{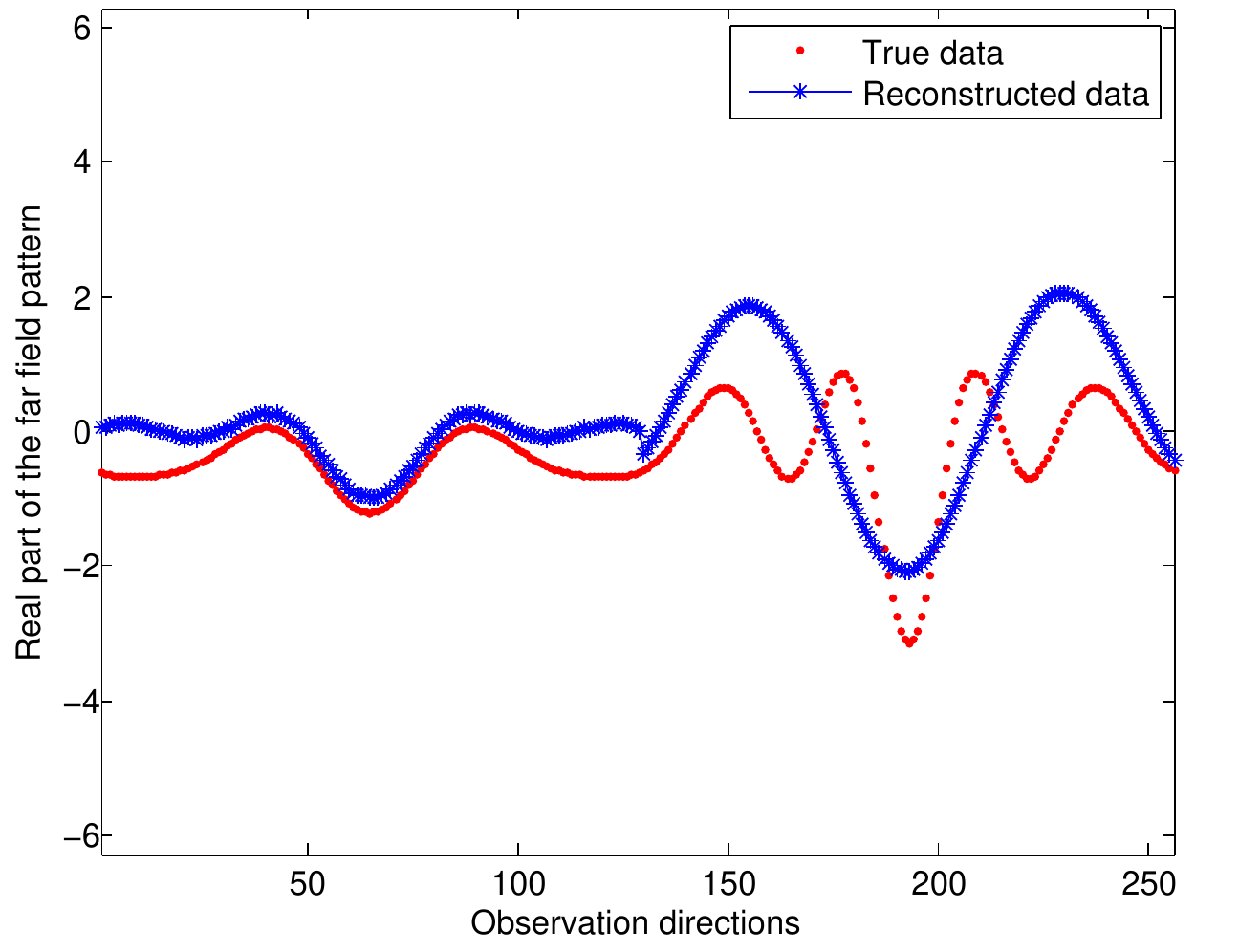}}
  \subfigure[\textbf{DC-IE with $J=9$}]{
    \includegraphics[width=2in]{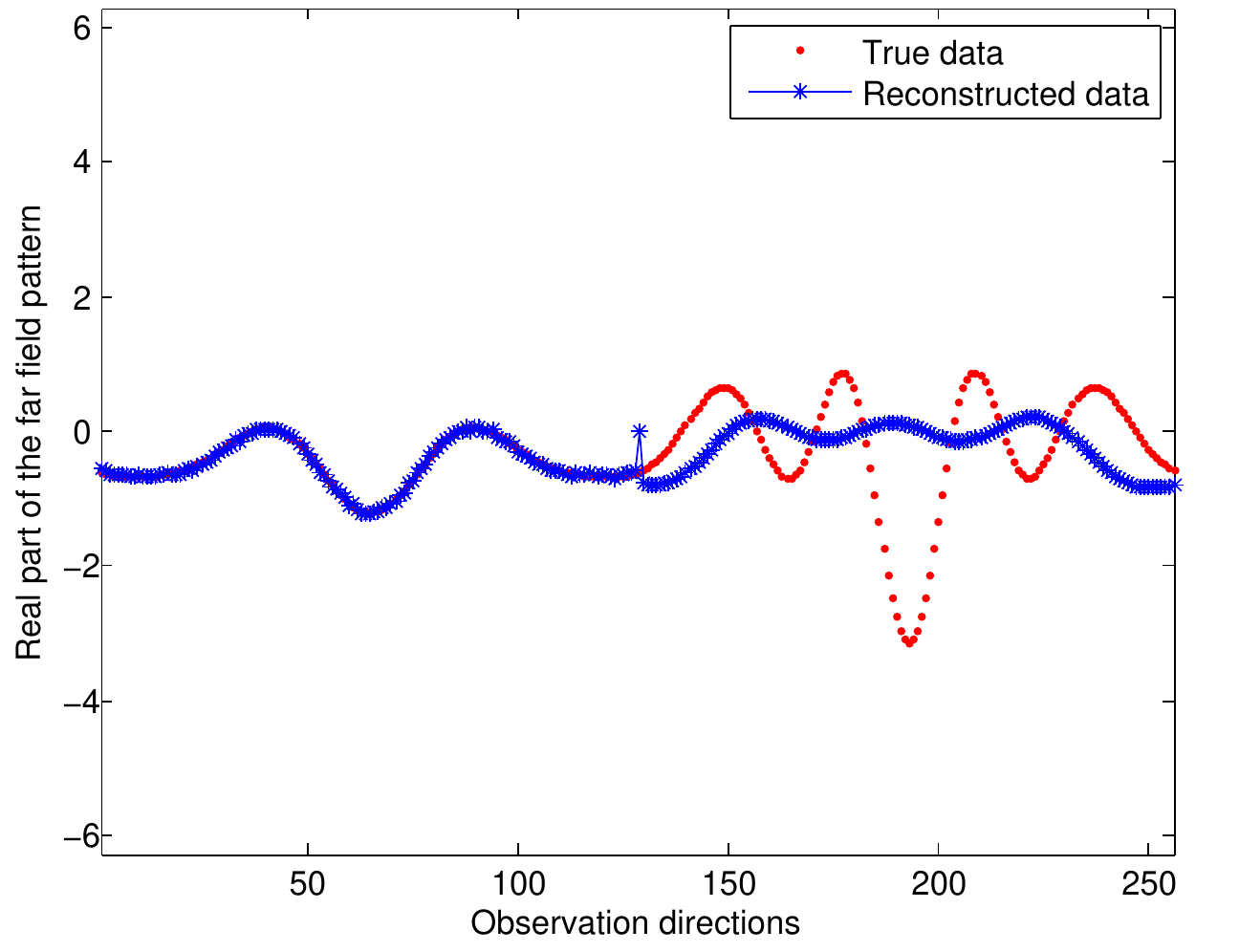}}
  \subfigure[\textbf{DC-IE with $J=39$}]{
    \includegraphics[width=2in]{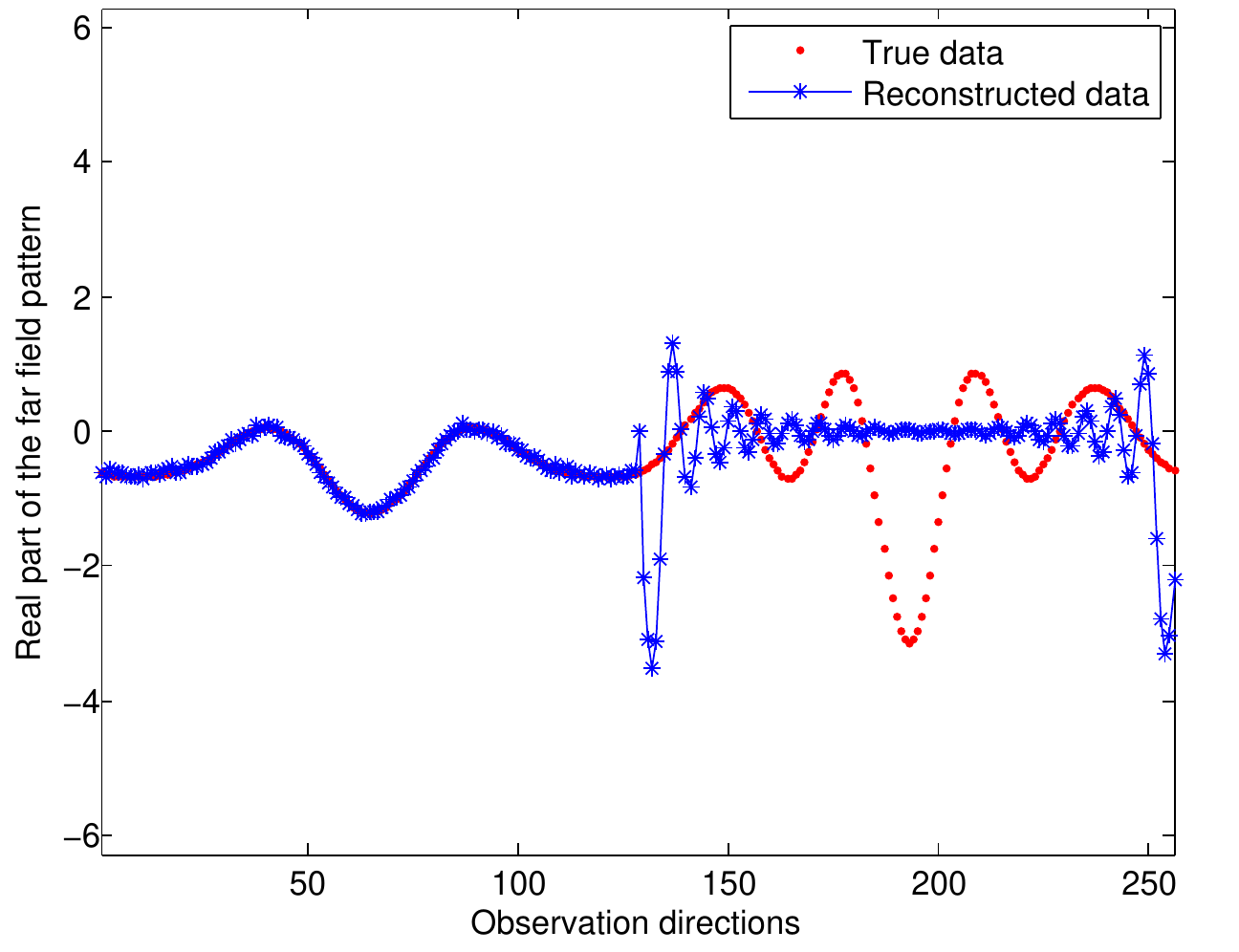}}
\caption{\bf Data completion for the far field pattern with incident direction $d=(0,-1)$. Regularization II \eqref{data completion Tikhonov 1} with parameter $\eps = 10^{-3}$.}
\label{DC-2}
\end{figure}

\begin{figure}[htbp]
  \centering
\subfigure[\textbf{DC-FS with $J=4$}]{
    \includegraphics[width=2in]{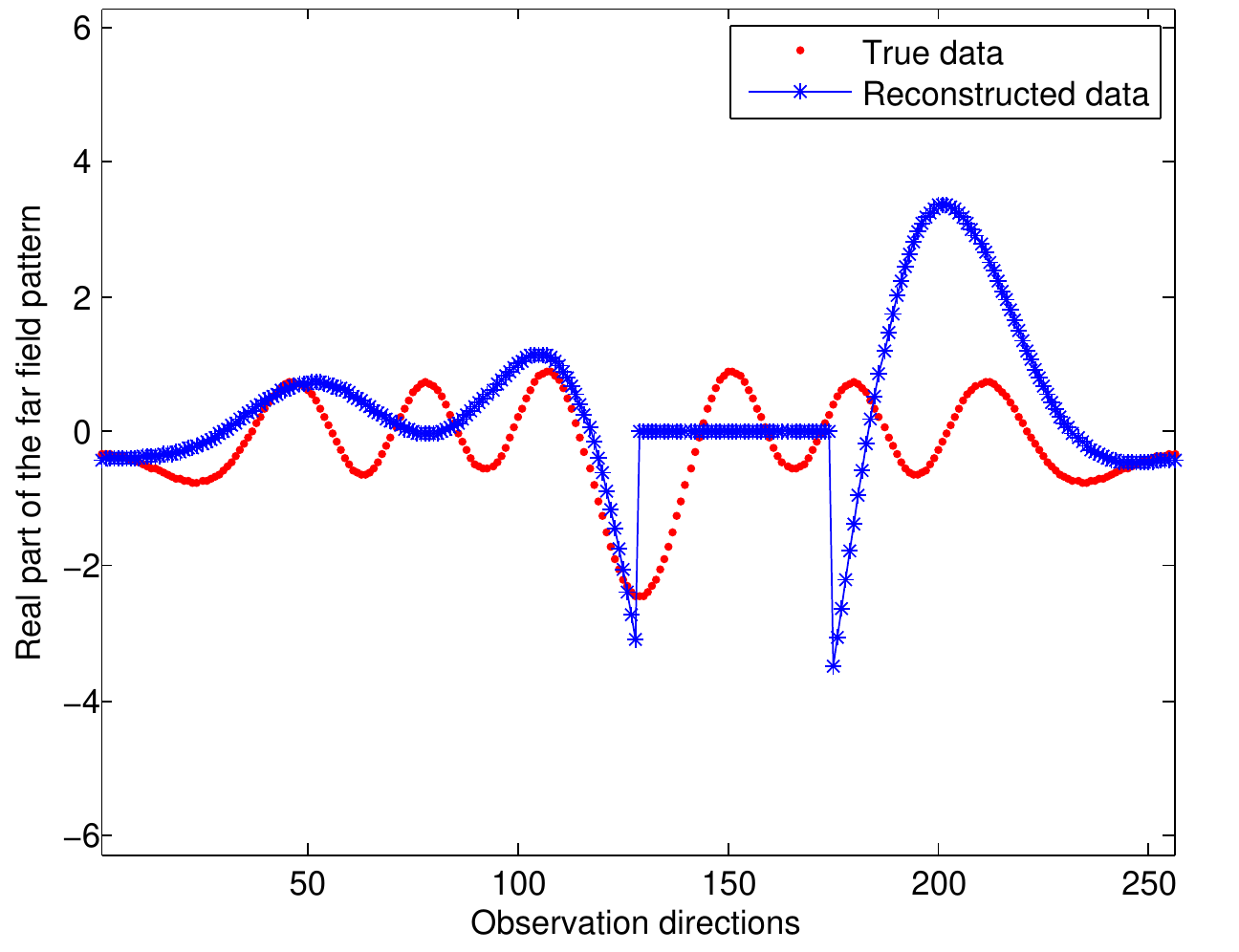}}
  \subfigure[\textbf{DC-FS with $J=9$}]{
    \includegraphics[width=2in]{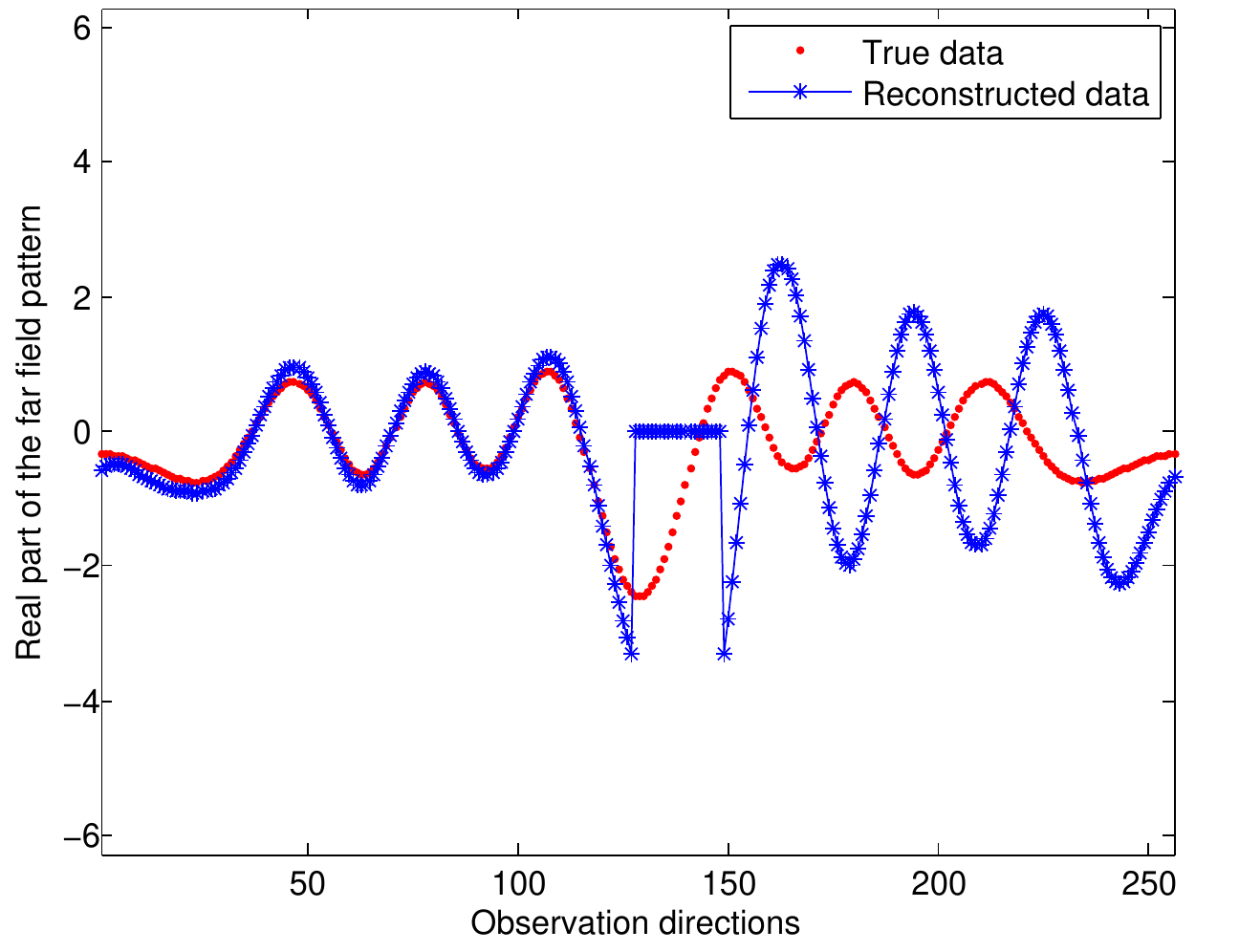}}
  \subfigure[\textbf{DC-FS with $J=39$}]{
    \includegraphics[width=2in]{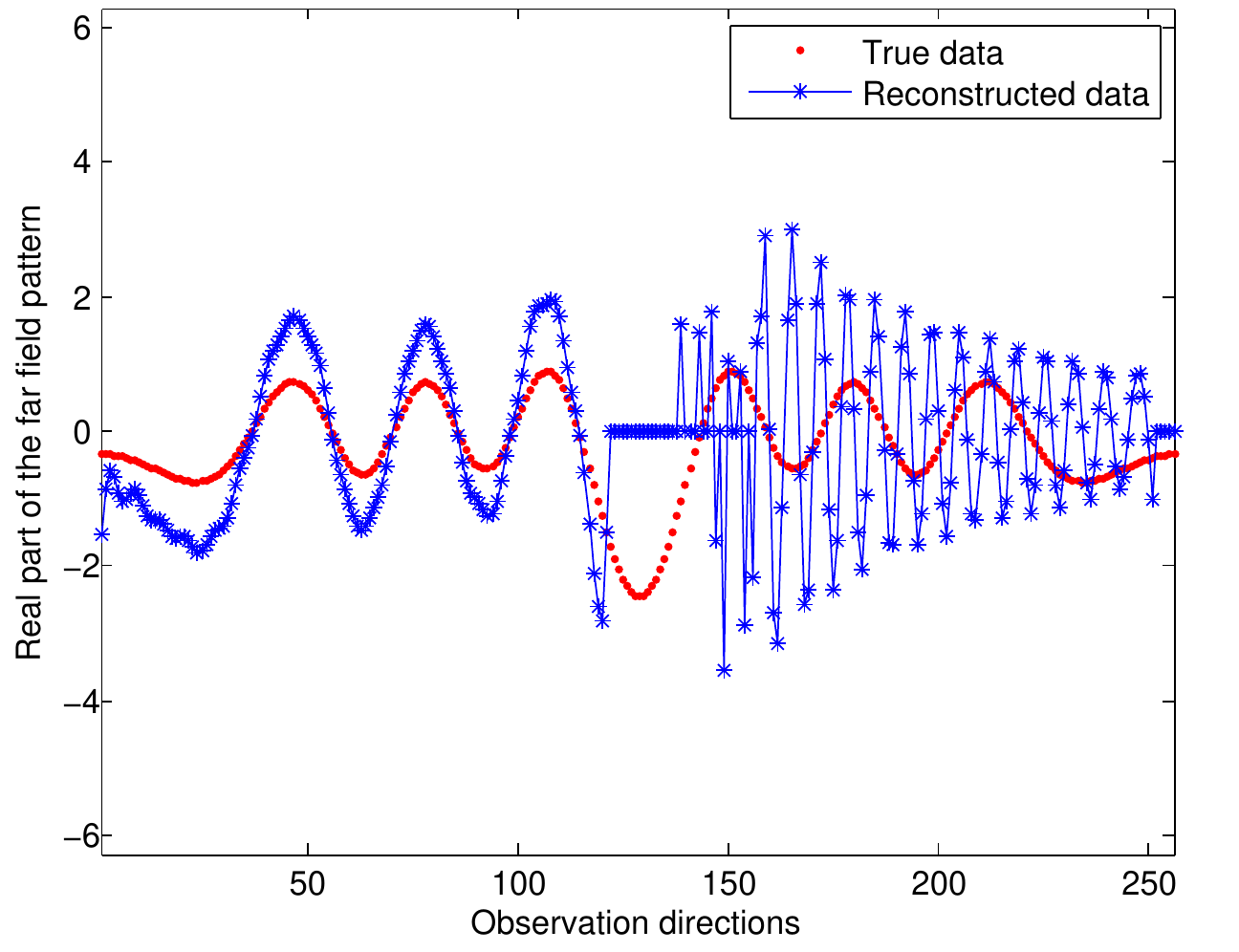}}\\
  \subfigure[\textbf{DC-IE with $J=4$}]{
    \includegraphics[width=2in]{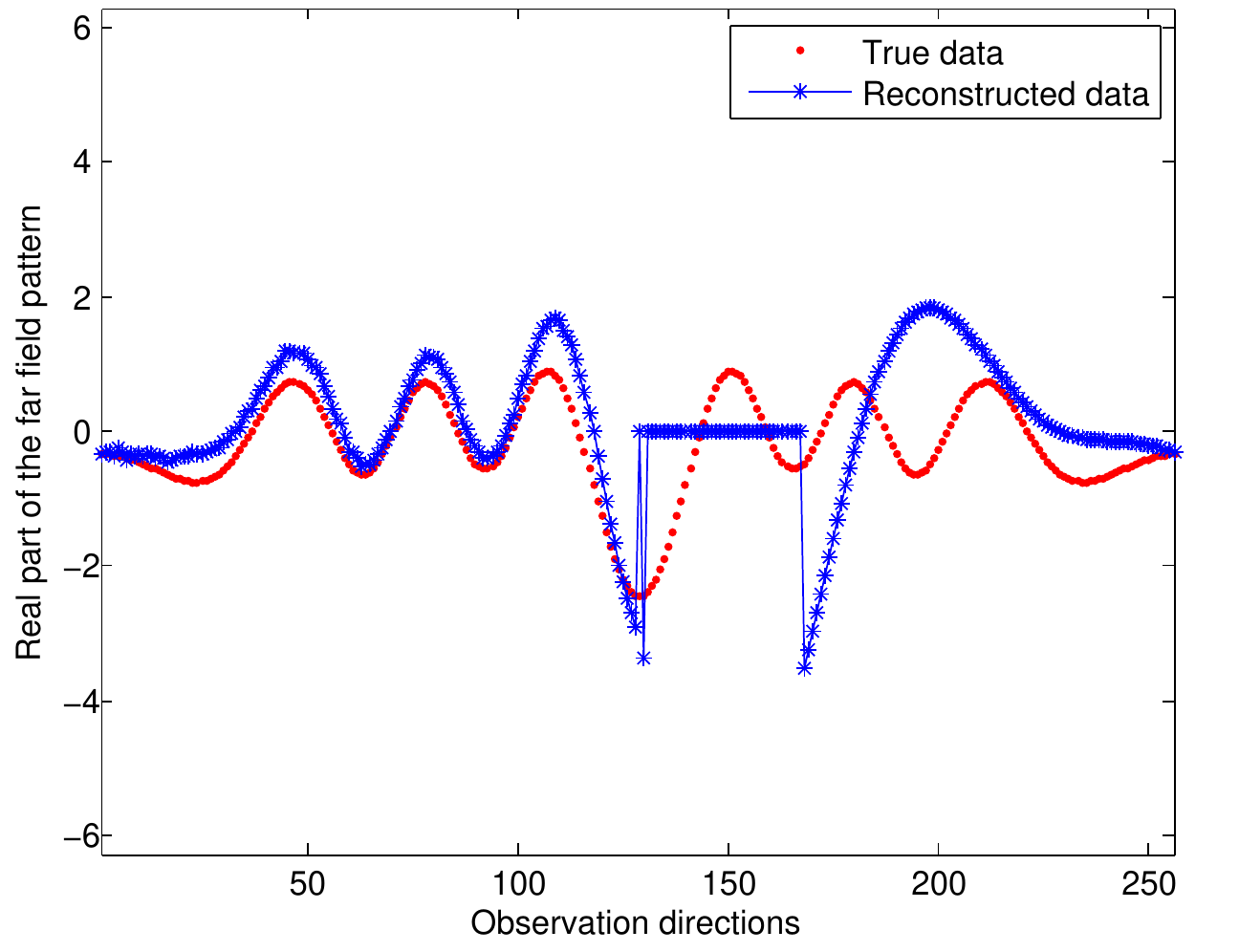}}
  \subfigure[\textbf{DC-IE with $J=9$}]{
    \includegraphics[width=2in]{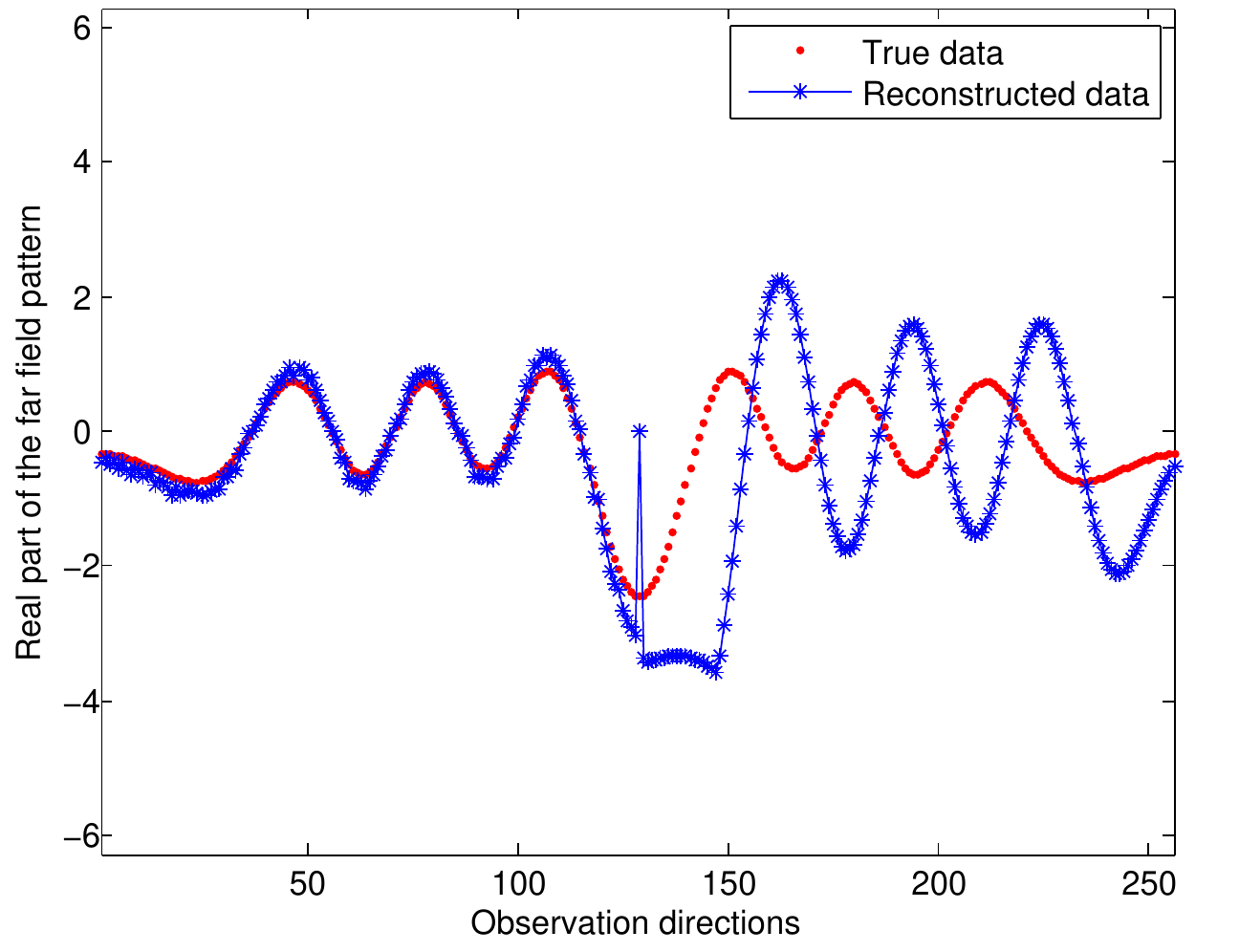}}
  \subfigure[\textbf{DC-IE with $J=39$}]{
    \includegraphics[width=2in]{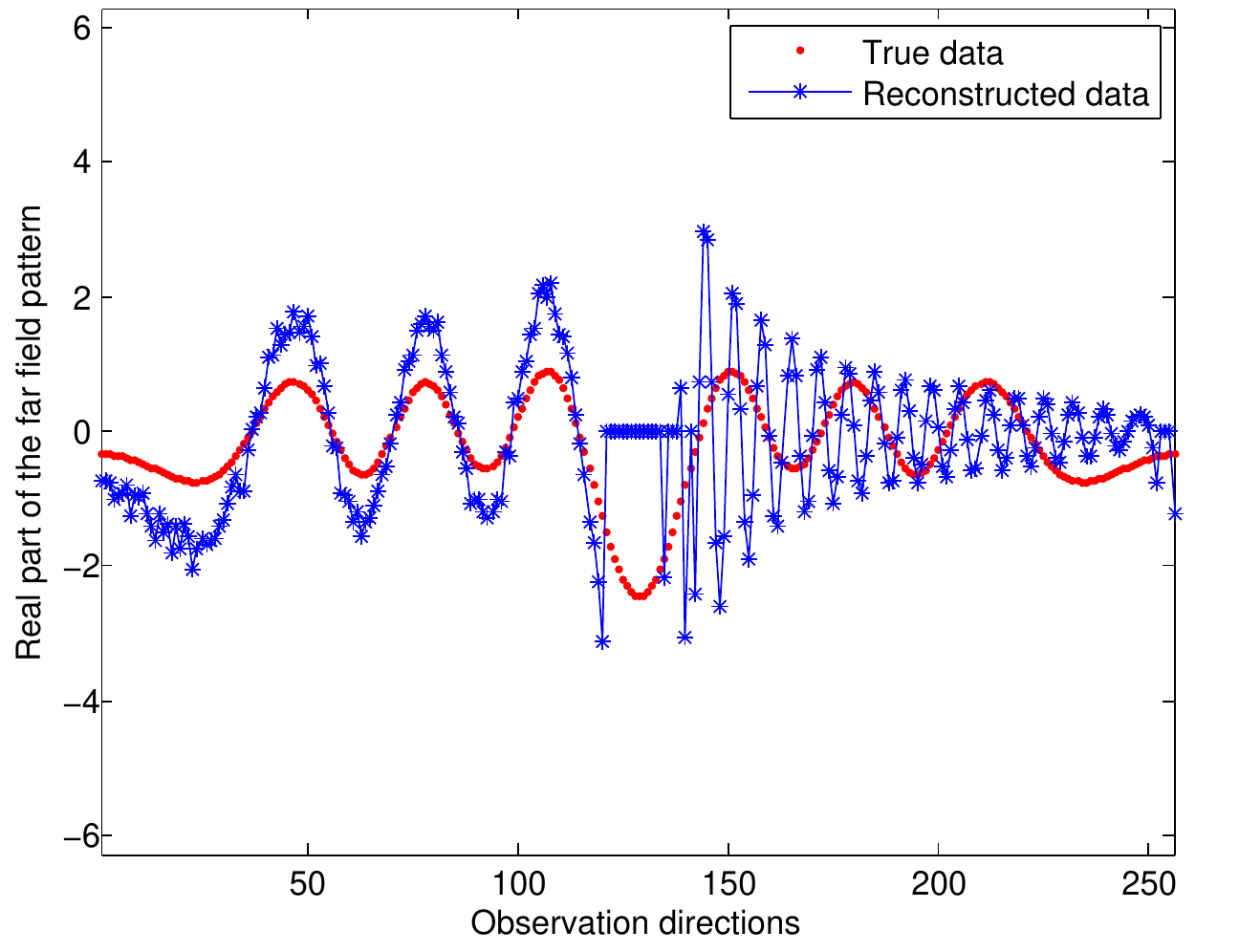}}
\caption{\bf Data completion for the far field pattern with incident direction $d=(-1,0)$. Regularization II \eqref{data completion Tikhonov 1}  with parameter $\eps = 10^{-3}$.}
\label{DC-1}
\end{figure}

\begin{figure}[htbp]
  \centering
\subfigure[\textbf{DC-FS with $J=4$}]{
    \includegraphics[width=2in]{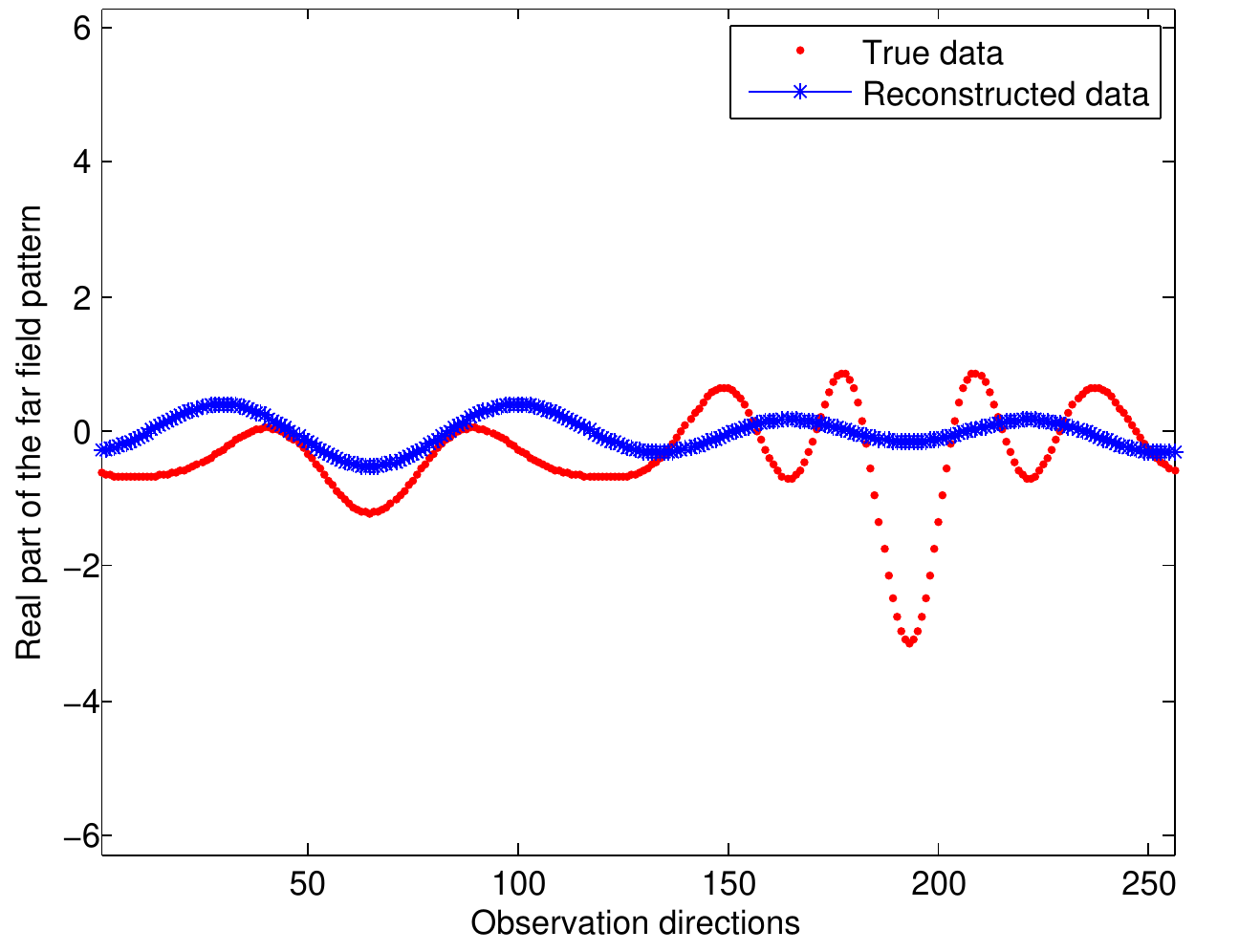}}
  \subfigure[\textbf{DC-FS with $J=9$}]{
    \includegraphics[width=2in]{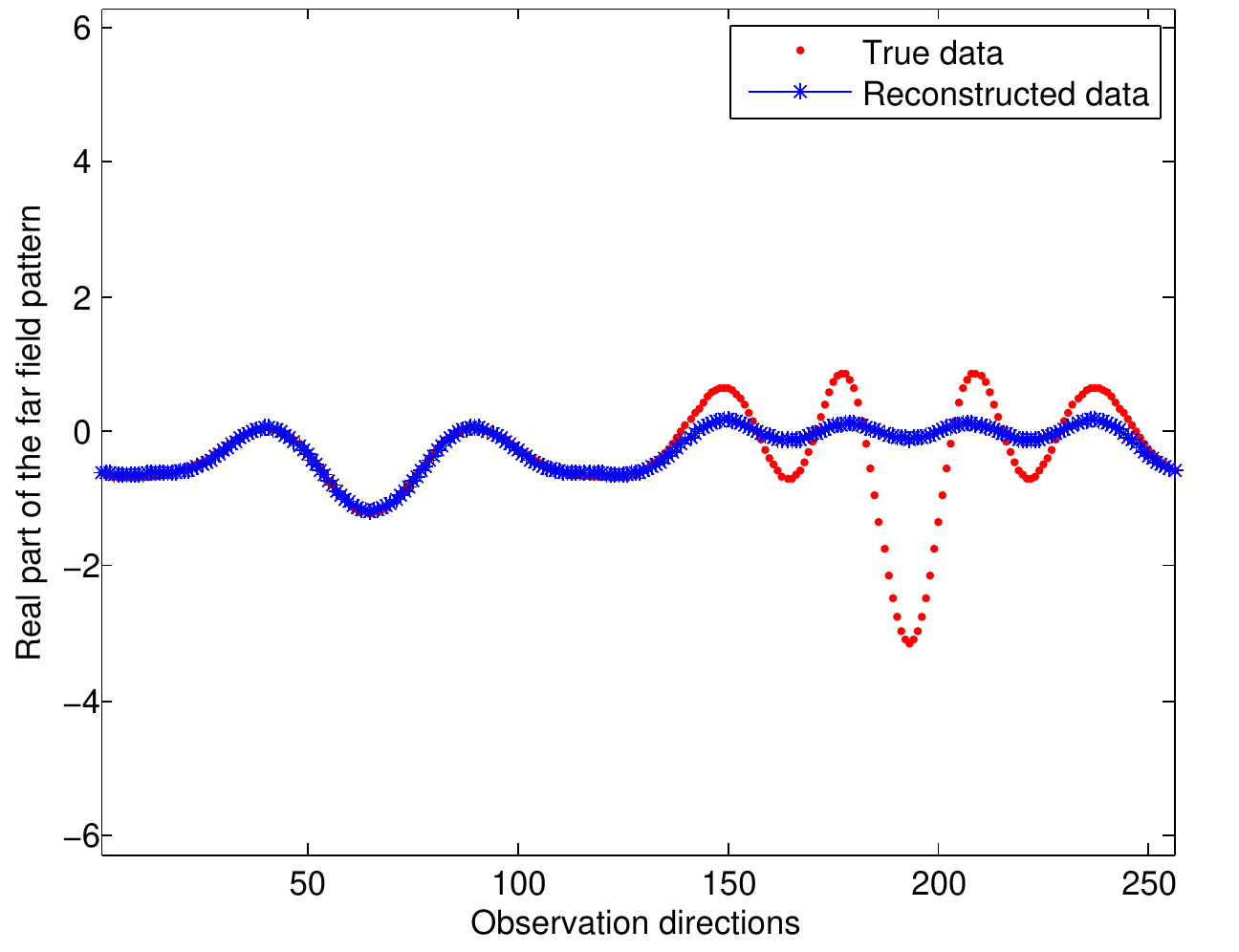}}
  \subfigure[\textbf{DC-FS with $J=39$}]{
    \includegraphics[width=2in]{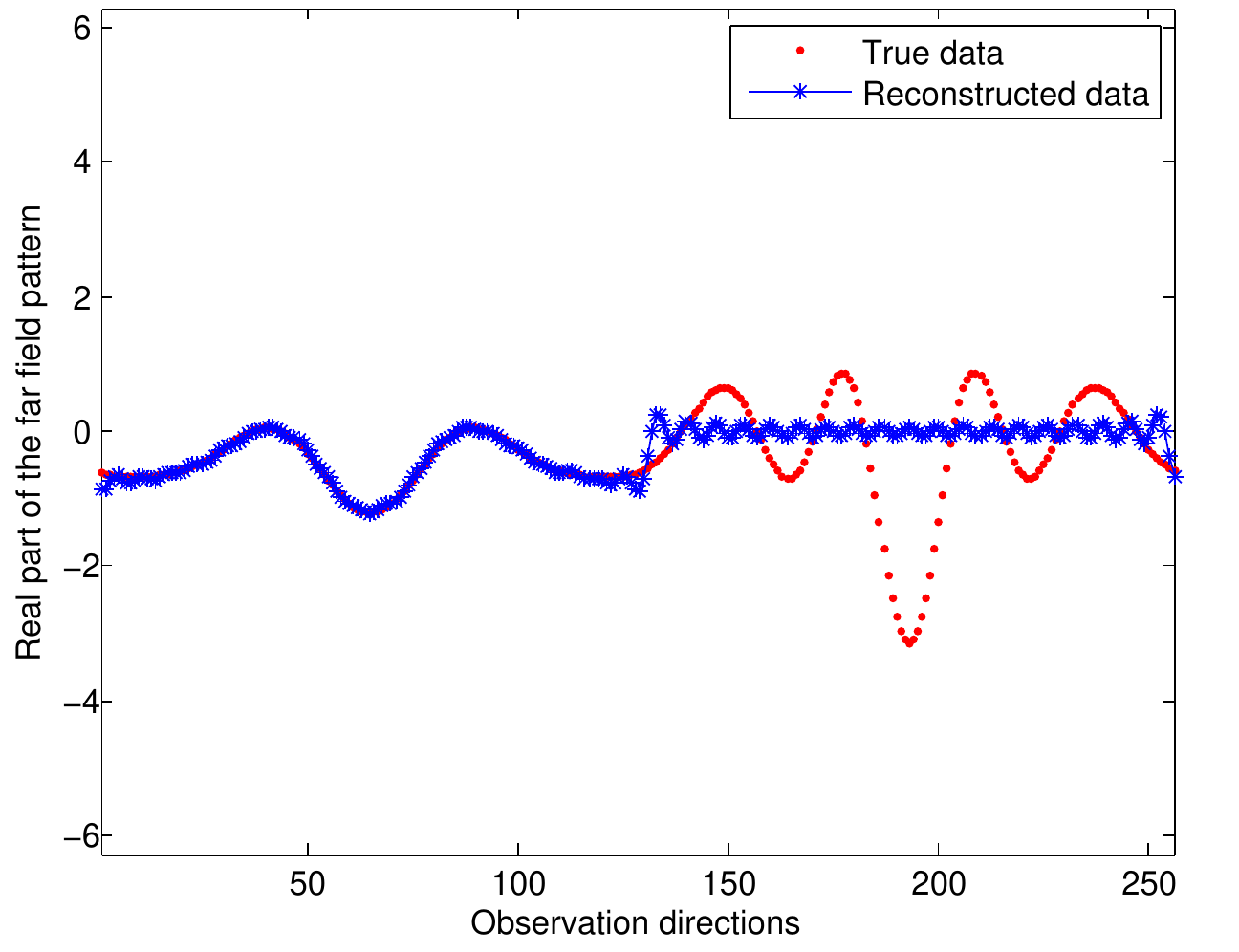}}\\
  \subfigure[\textbf{DC-IE with $J=4$}]{
    \includegraphics[width=2in]{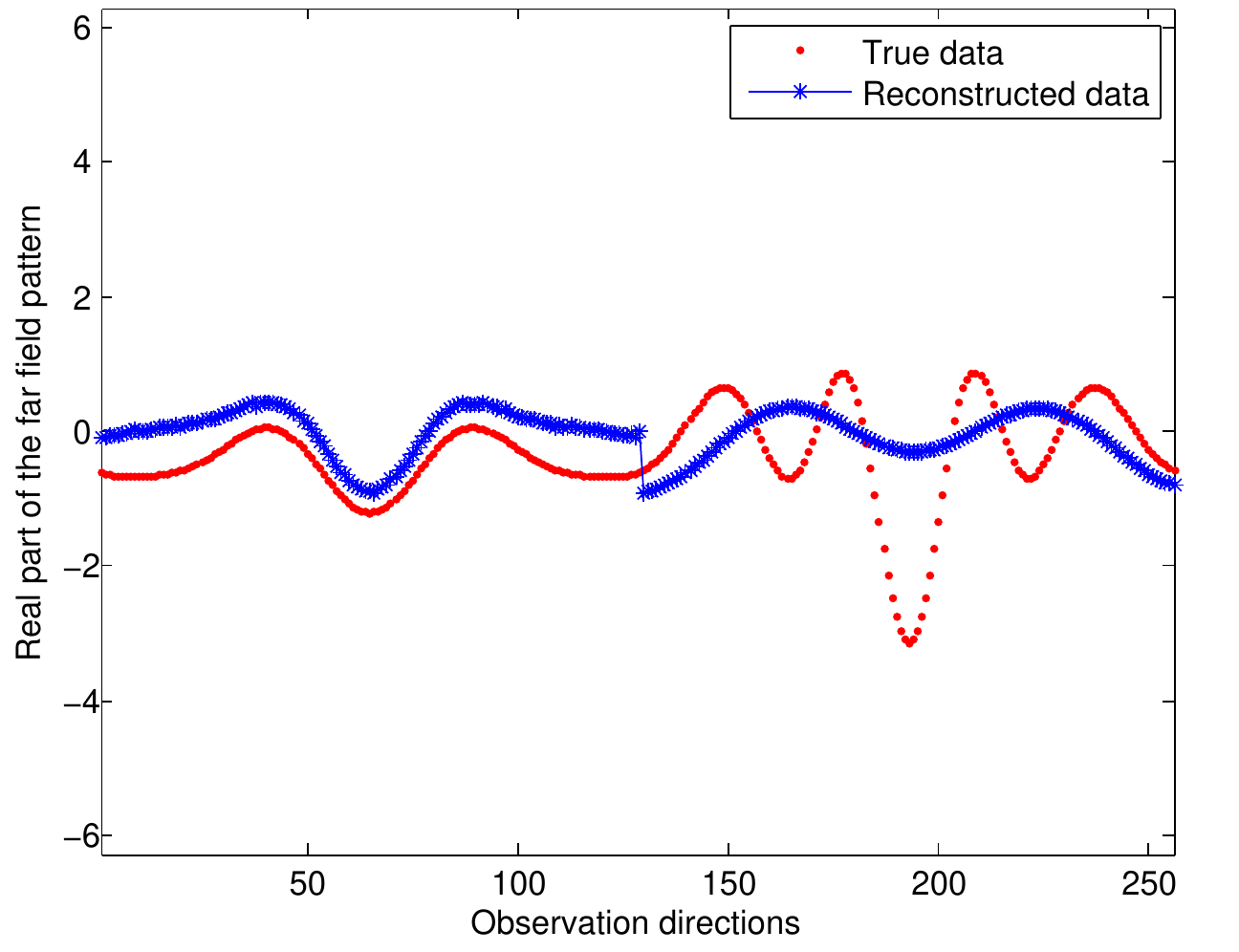}}
  \subfigure[\textbf{DC-IE with $J=9$}]{
    \includegraphics[width=2in]{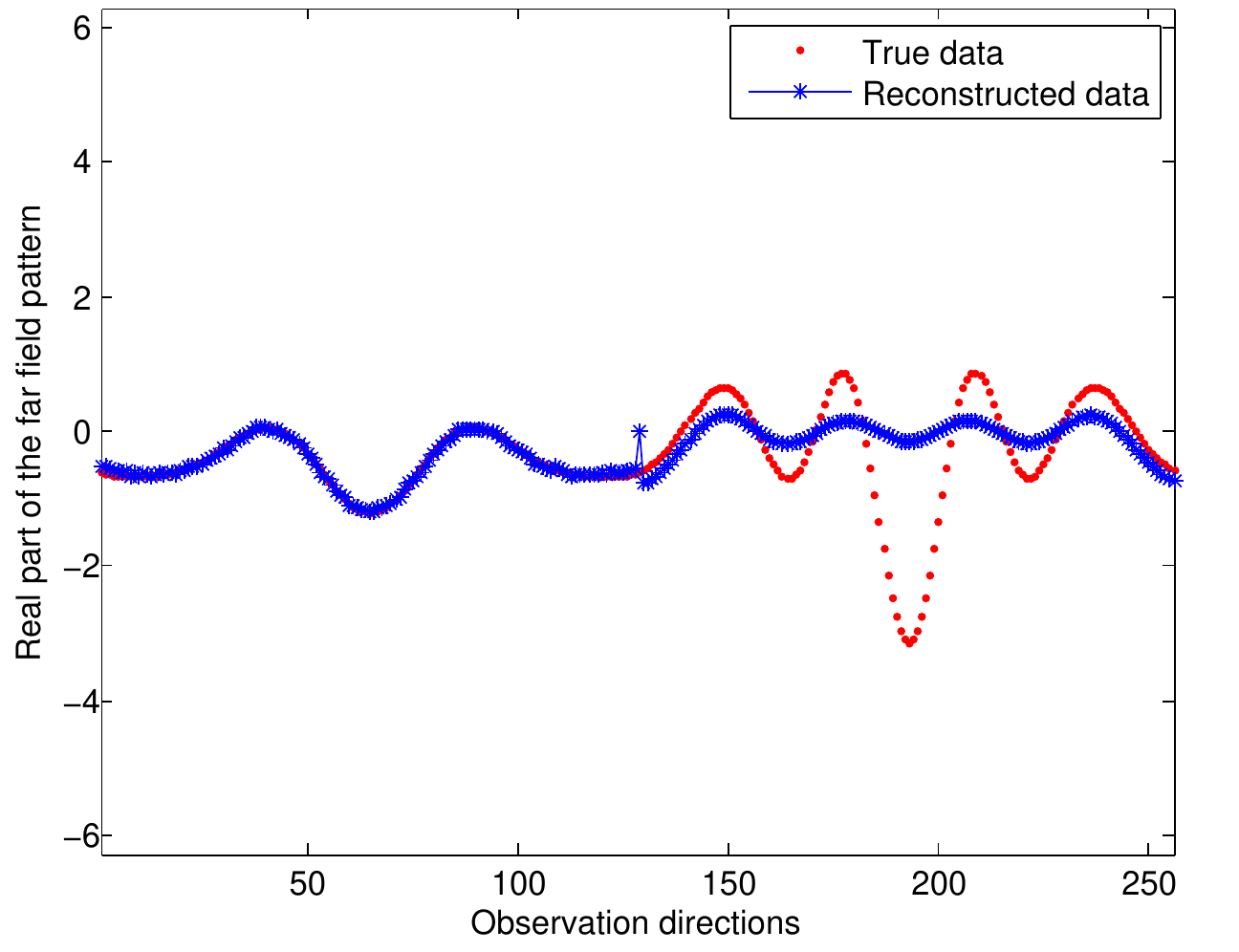}}
  \subfigure[\textbf{DC-IE with $J=39$}]{
    \includegraphics[width=2in]{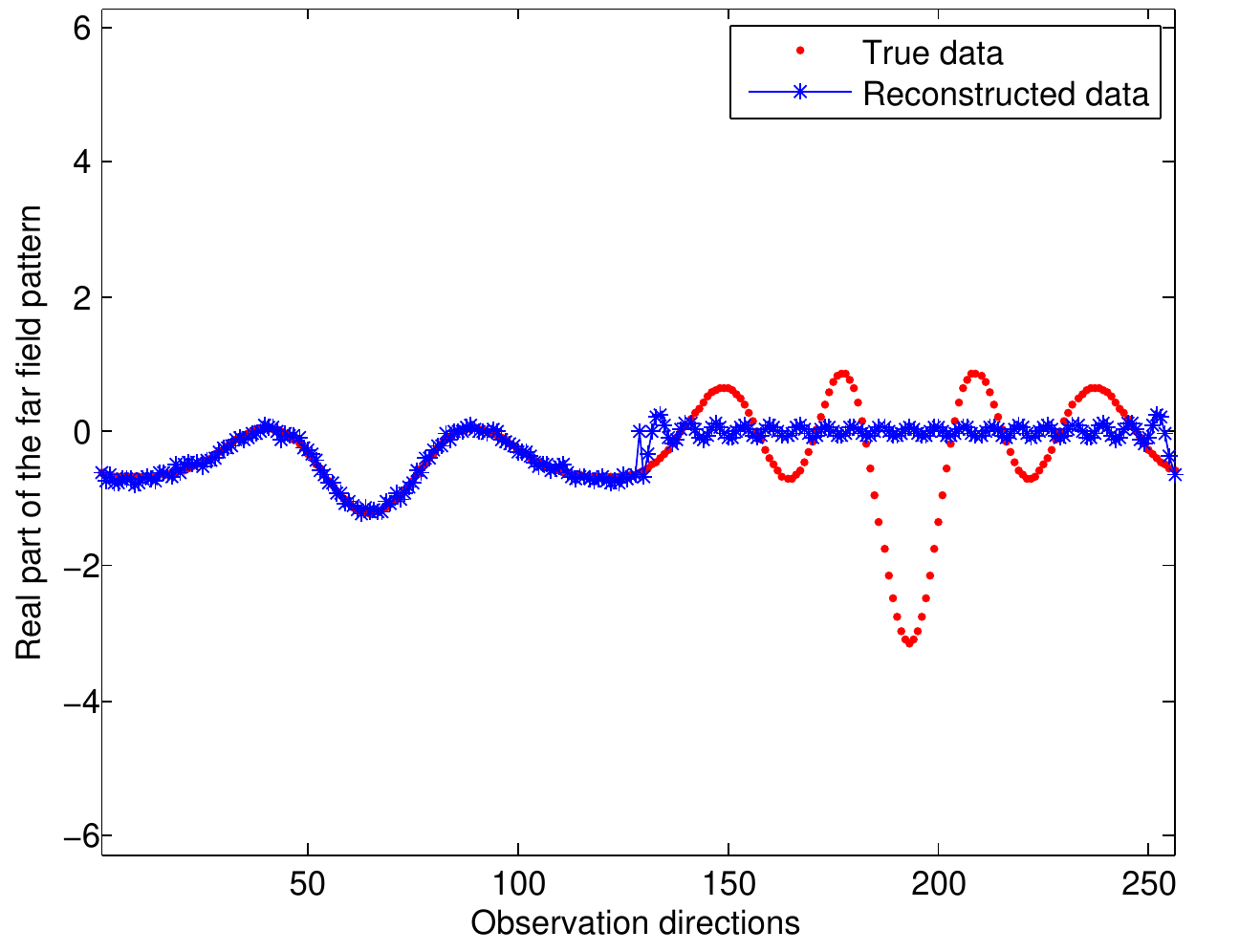}}
\caption{\bf Data completion for the far field pattern with incident direction $d=(0,-1)$. Regularization I \eqref{data completion TSVD} with   cut-off value $\sigma = 0.1$.}
\label{TSVD-DC-2}
\end{figure}
\begin{figure}[htbp]
  \centering
\subfigure[\textbf{DC-FS with $J=4$}]{
    \includegraphics[width=2in]{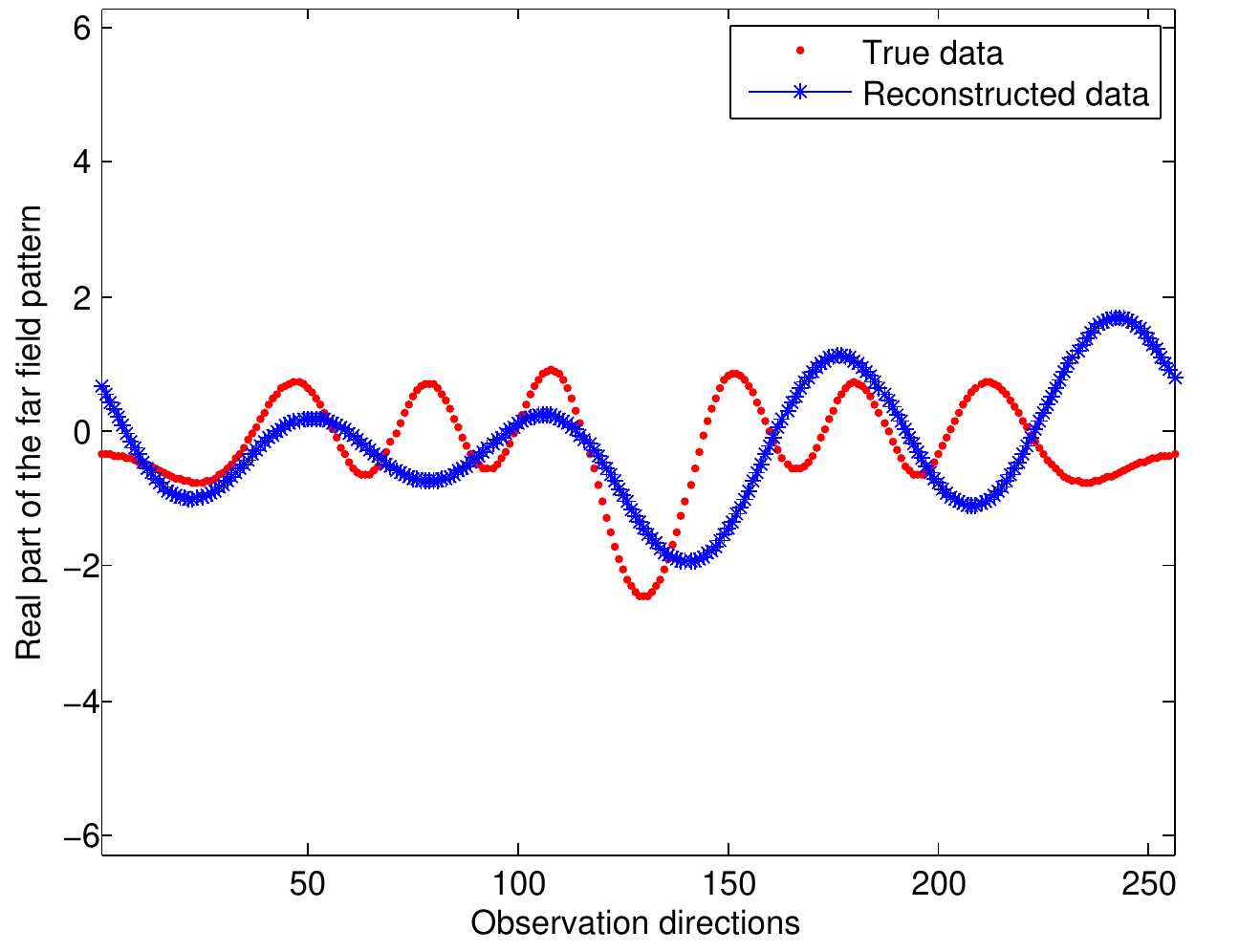}}
  \subfigure[\textbf{DC-FS with $J=9$}]{
    \includegraphics[width=2in]{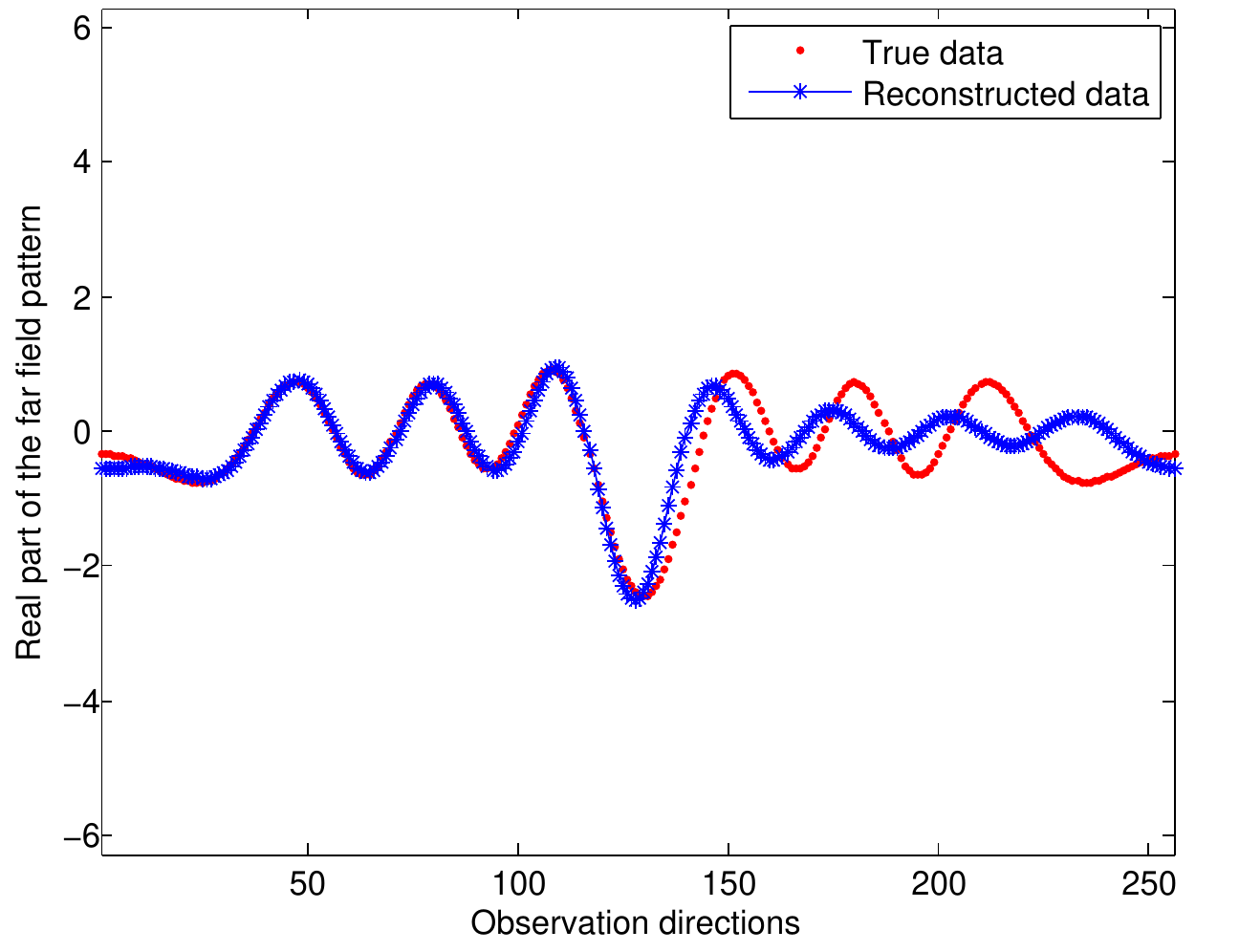}}
  \subfigure[\textbf{DC-FS with $J=39$}]{
    \includegraphics[width=2in]{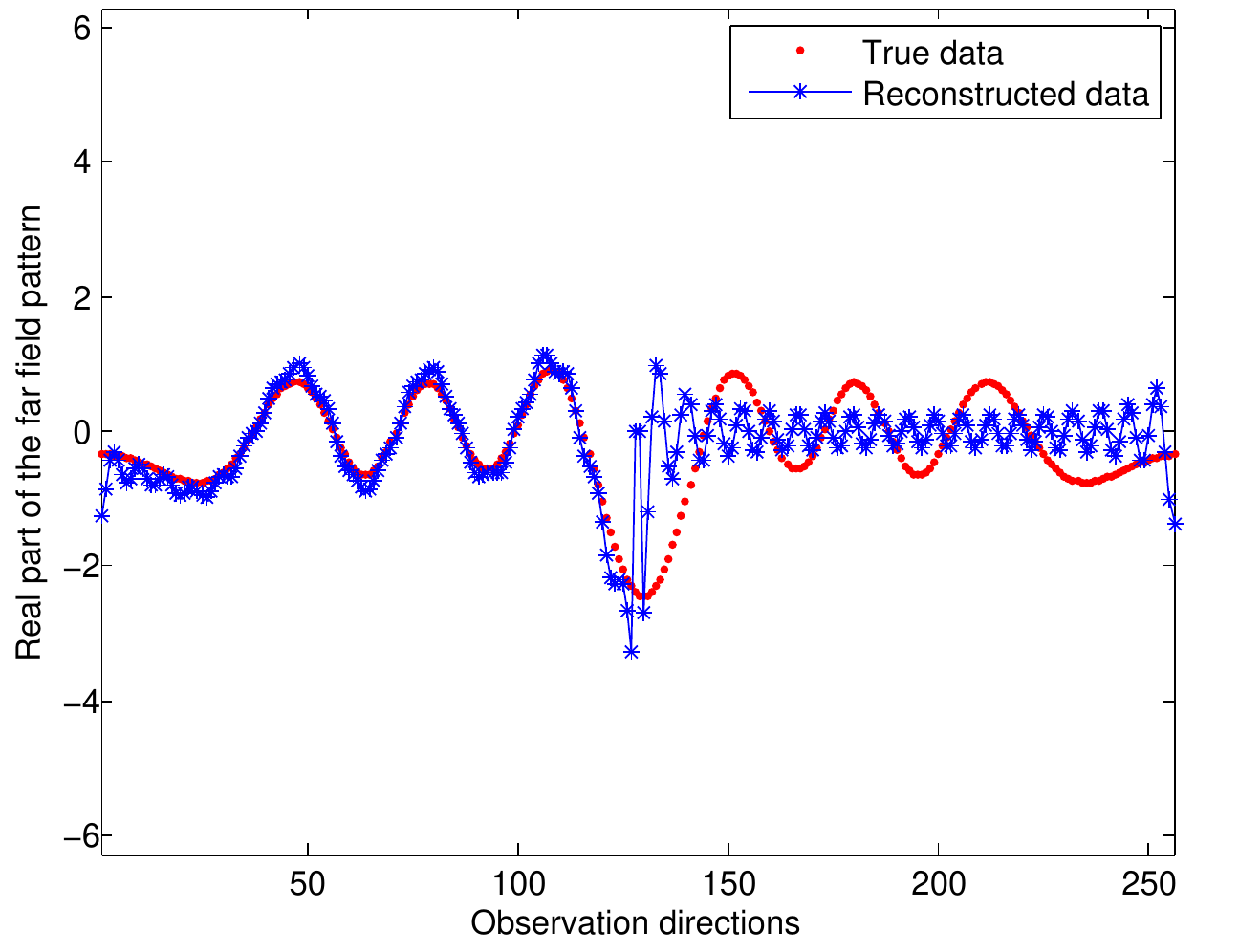}}\\
  \subfigure[\textbf{DC-IE with $J=4$}]{
    \includegraphics[width=2in]{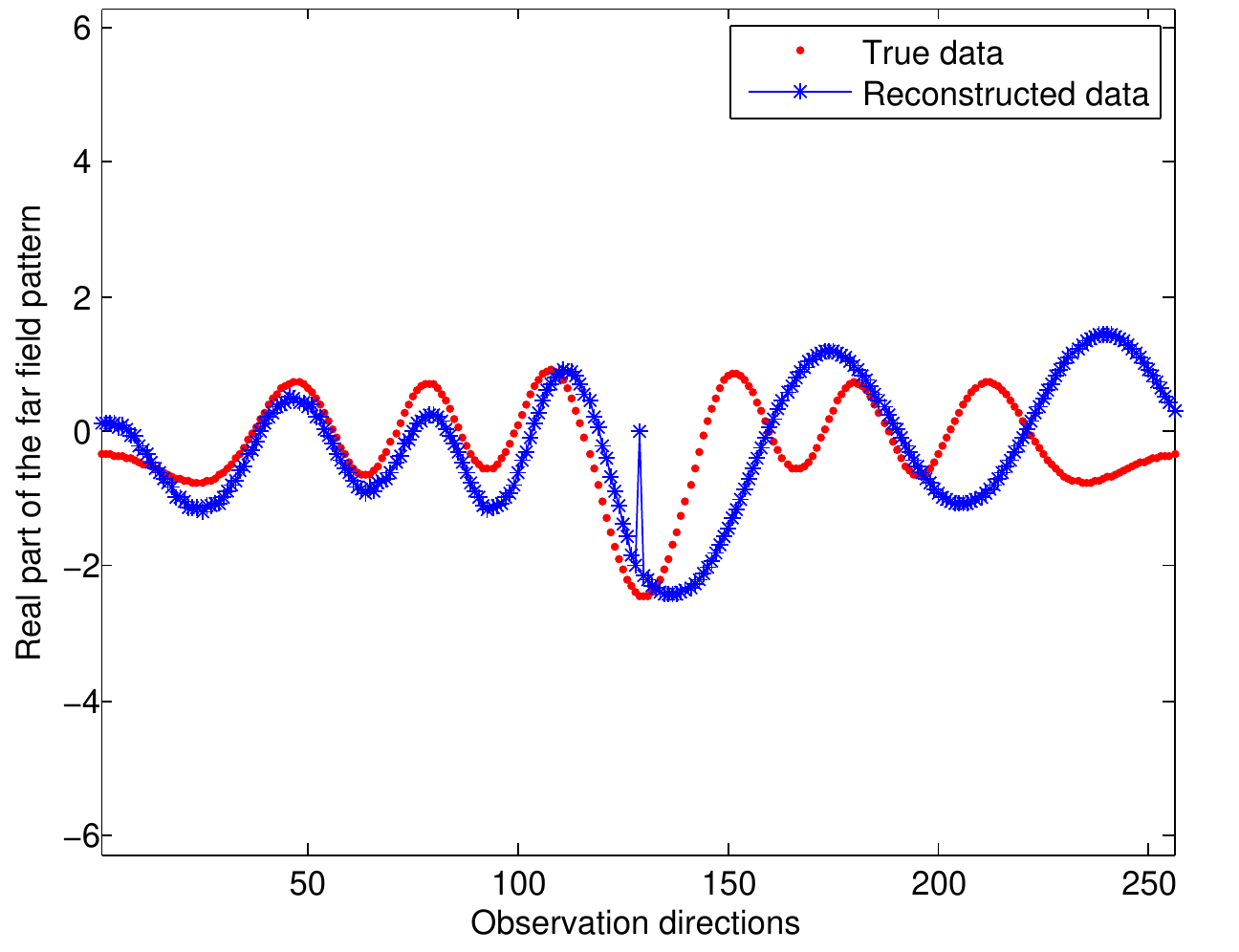}}
  \subfigure[\textbf{DC-IE with $J=9$}]{
    \includegraphics[width=2in]{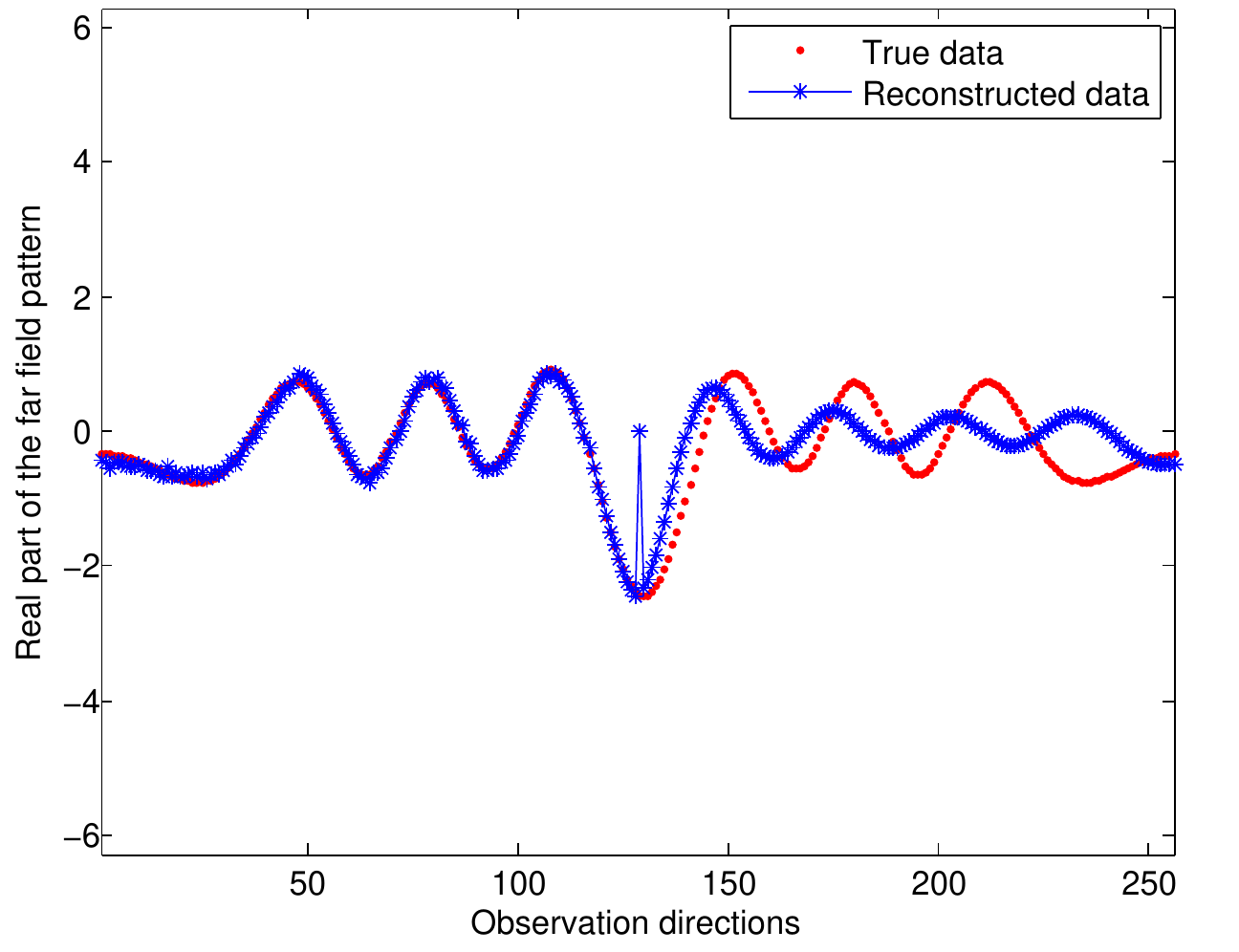}}
  \subfigure[\textbf{DC-IE with $J=39$}]{
    \includegraphics[width=2in]{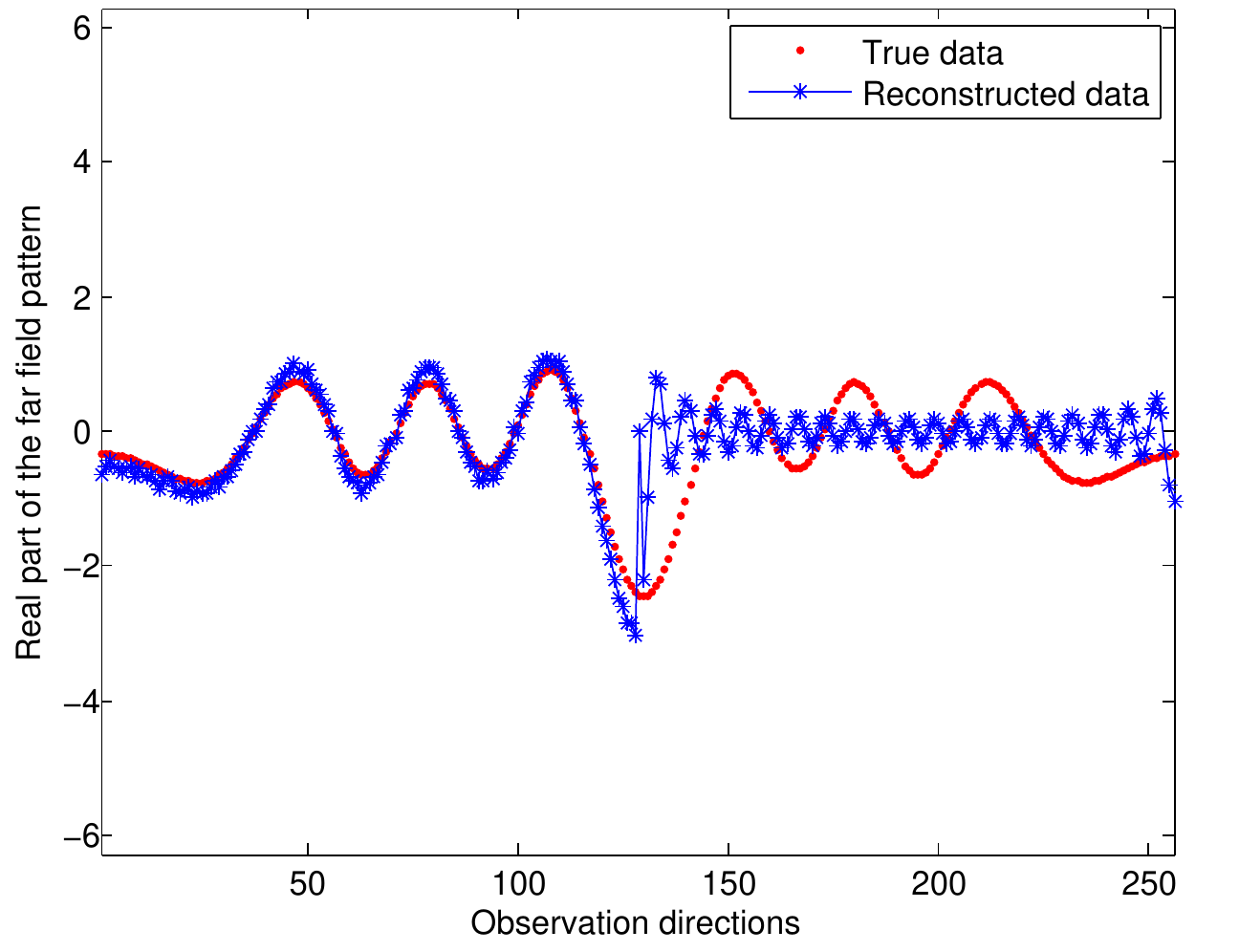}}
\caption{\bf Data completion for the far field pattern with incident direction $d=(-1,0)$. Truncated SVD is used  with the cut-off value $\sigma = 0.1$.}
\label{TSVD-DC-1}
\end{figure}

\subsection{Object reconstruction results and discussions}
We consider the following four object reconstruction methods
\begin{itemize}
  \item DSM using the limited aperture backscattering data directly;
  \item DSM with {\bf DC-FS};
  \item DSM with {\bf DC-IE};
  \item FM with {\bf DC-IE}.
\end{itemize}
For comparison of these reconstruction methods, we take $J=9$ when using the data completion algorithms.

If not otherwise stated, for the data completion algorithm, we use Regularization II \eqref{data completion Tikhonov 1}  with parameter $\eps = 10^{-3}$.
Figure \ref{Peanut-D} shows the reconstructions of a sound-soft peanut with the above mentioned  object reconstruction methods.
Obviously, the reconstructions improve with the help of the data completion algorithms. This can also be found in Figure \ref{Disk-D} for a sound-soft disk.
Numerically, the DSM seems to give better reconstructions, in particular on the illuminated parts.
To illustrate the performance of imaging using different regularizations, Figure \ref{Peanut-D-2} shows the reconstructions using {\bf DC-IE} with TSVD    and Tikhonov regularization (with Morozov discrepancy principle). It is observed that Figure \ref{Peanut-D-2} is comparable to Figure \ref{Peanut-D}.

We emphasize again that our data completion algorithms and object reconstruction methods are independent of the topological and physical properties of the unknown objects. To illustrate this, we show in Figures \ref{Peanut-N}-\ref{Disk-N}  the  reconstruction results with Neumann boundary condition. It is observed that the lower half parts can also be well reconstructed, which further illustrate the potential of the data completion and imaging algorithms.

\begin{figure}[htbp]
  \centering
\subfigure[\textbf{DSM using partial data directly}]{
    \includegraphics[width=2in]{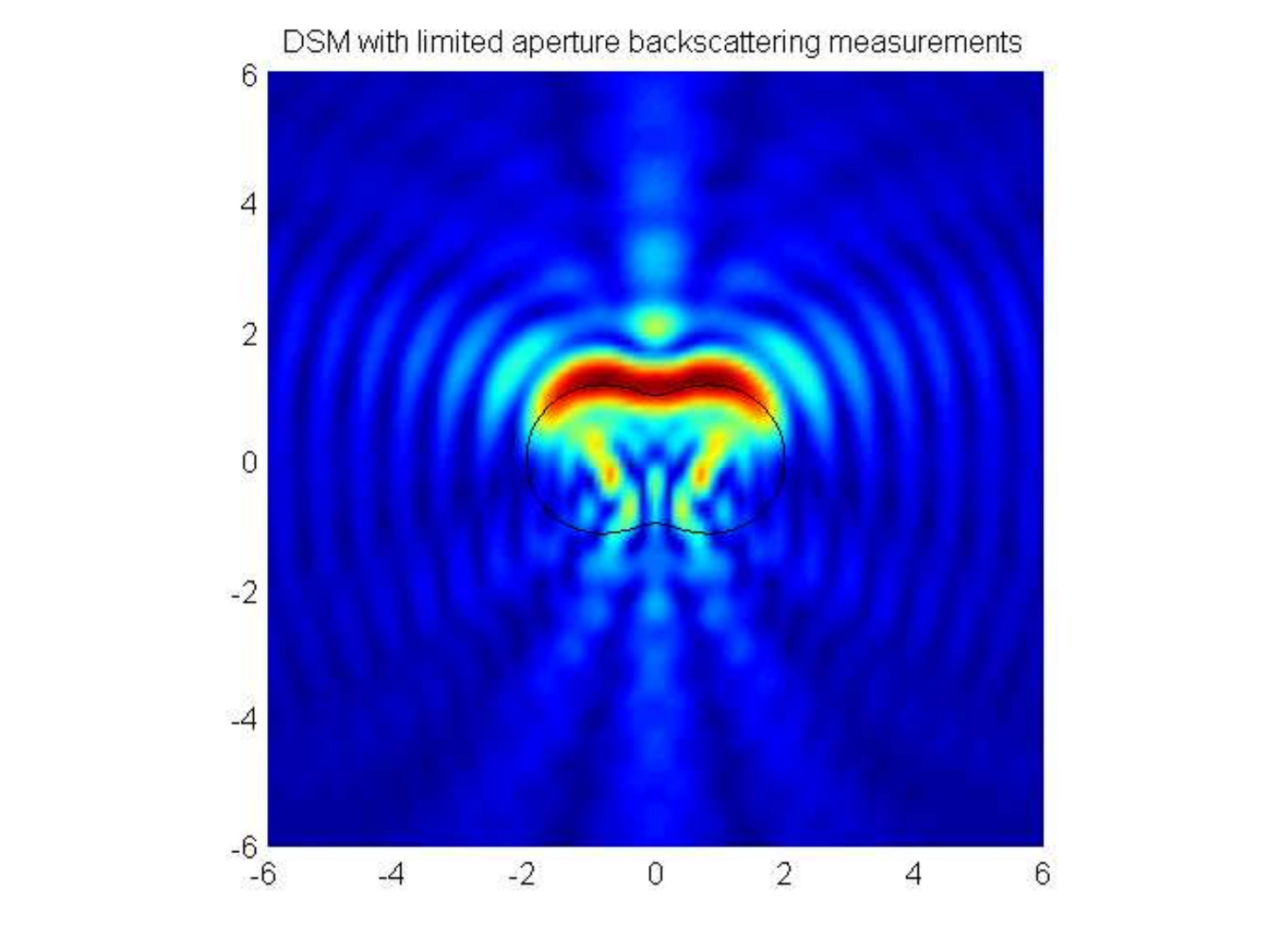}}
  \subfigure[\textbf{DSM with DC-FS}]{
    \includegraphics[width=2in]{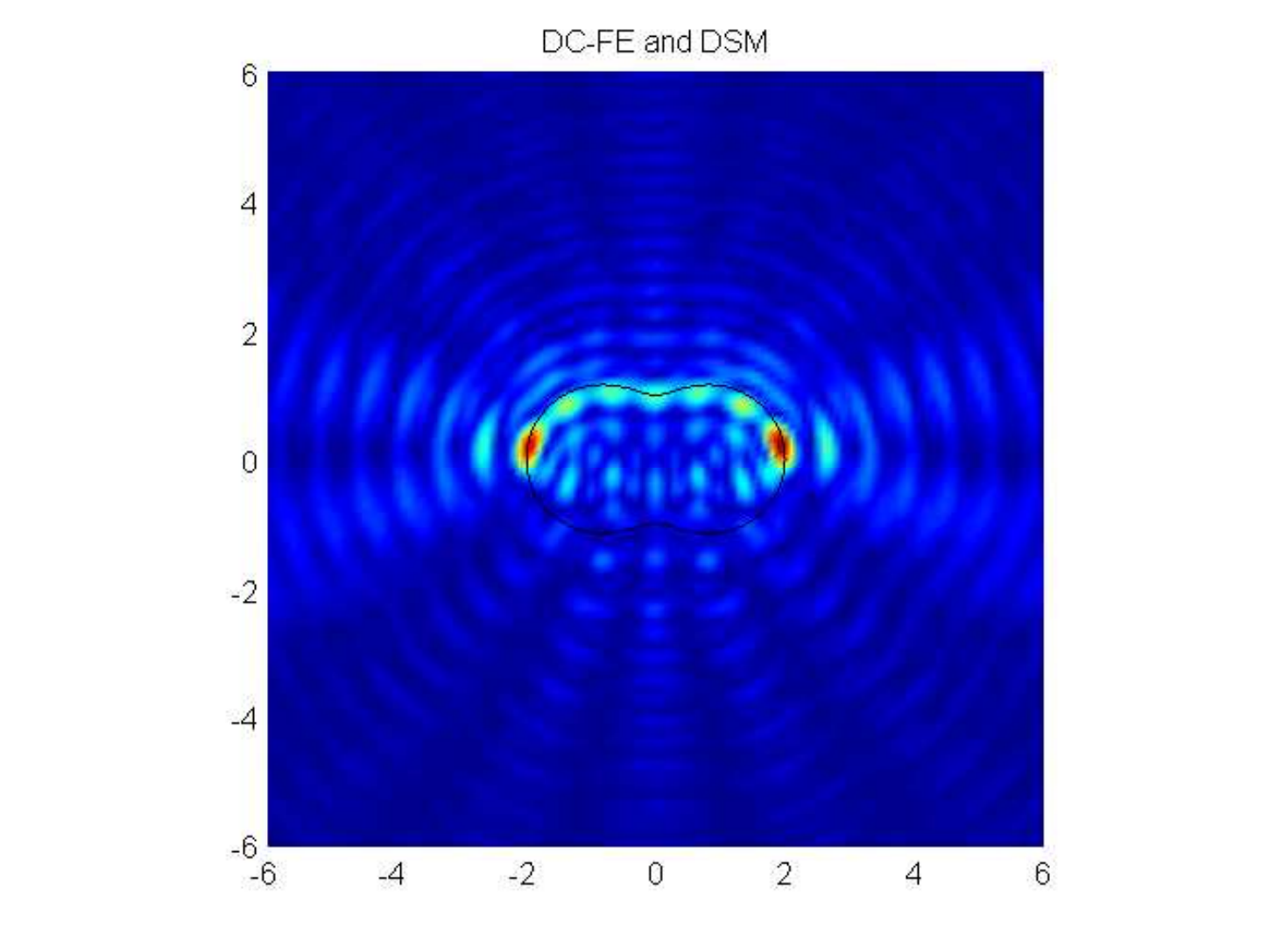}}\\
  \subfigure[\textbf{DSM with DC-IE}]{
    \includegraphics[width=2in]{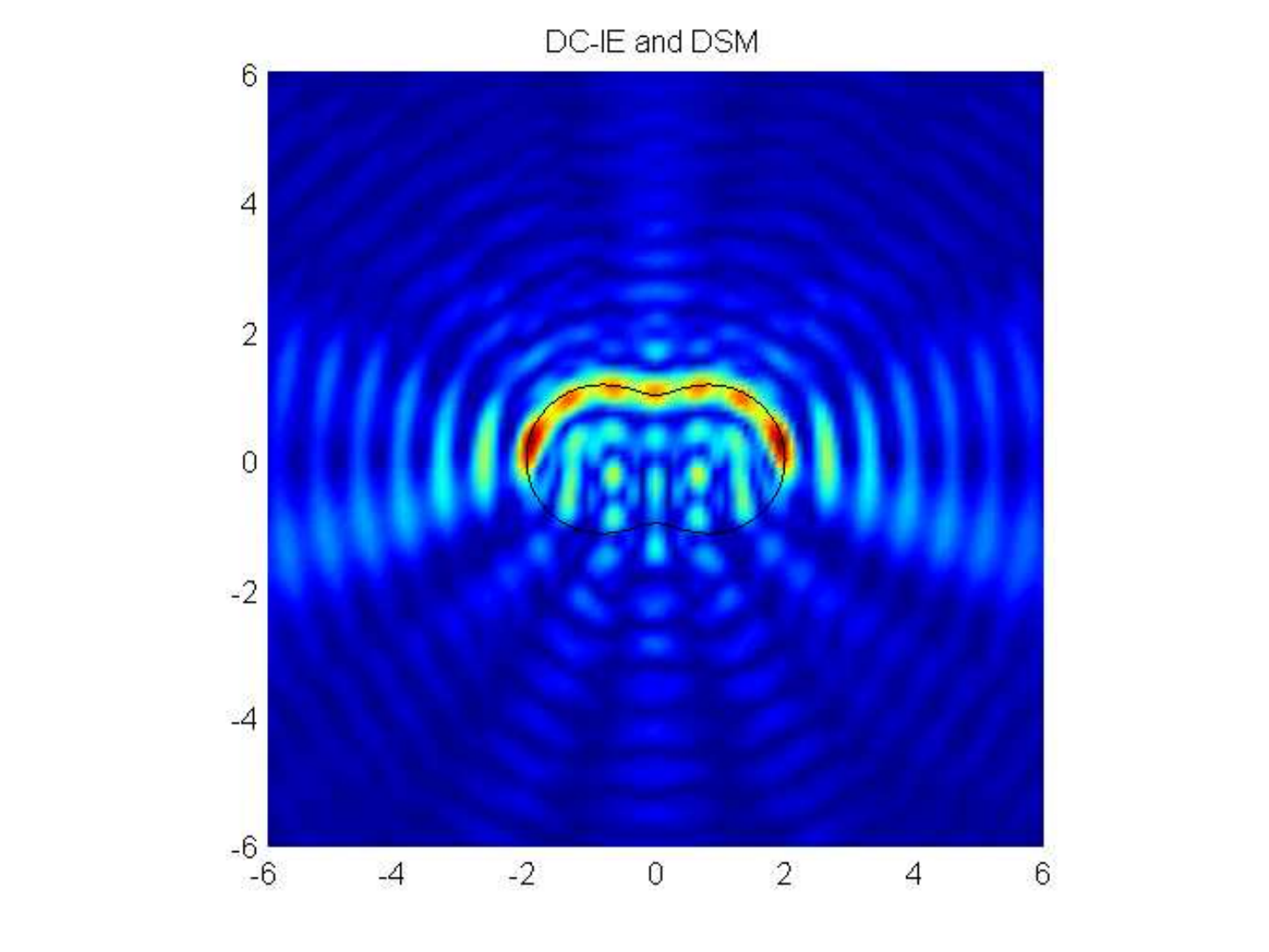}}
  \subfigure[\textbf{FM with DC-IE}]{
    \includegraphics[width=2in]{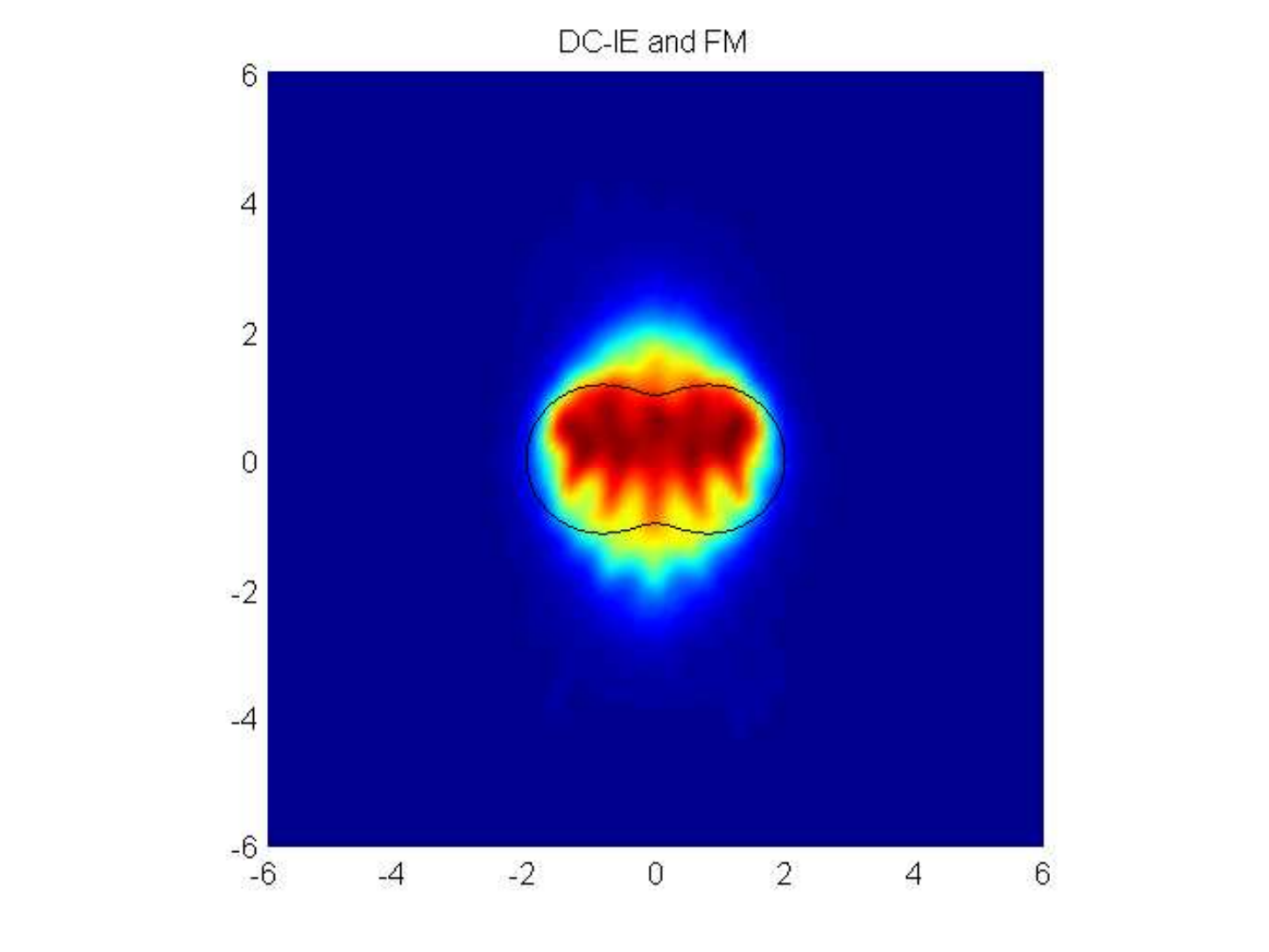}}
\caption{\bf Reconstructions of the sound-soft peanut.}
\label{Peanut-D}
\end{figure}

\begin{figure}[htbp]
  \centering
 \subfigure[\textbf{DSM using DC-IE with TSVD}]{
    \includegraphics[width=2in]{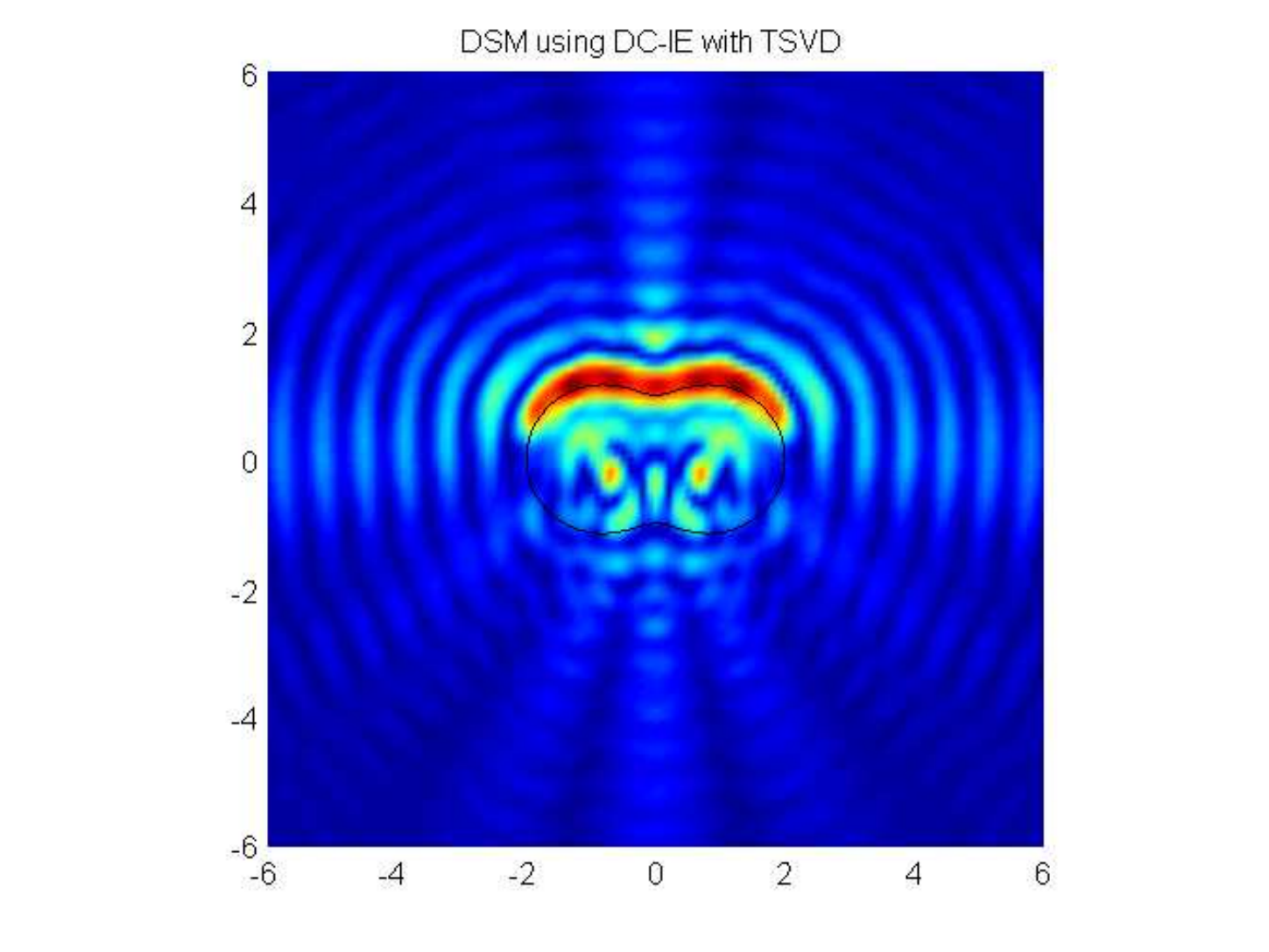}}\qquad
  \subfigure[\textbf{FM using DC-IE with TSVD}]{
    \includegraphics[width=2in]{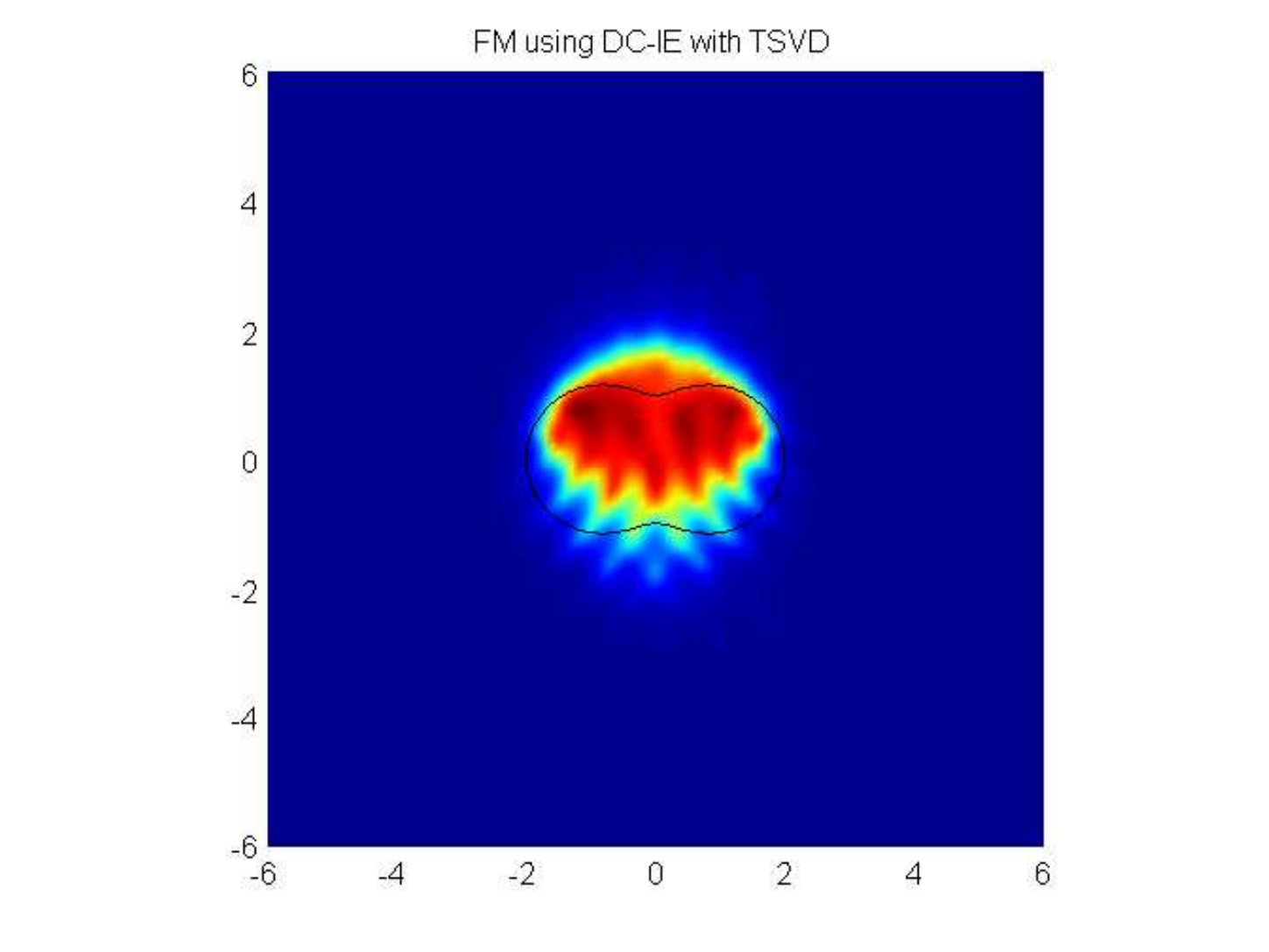}}\\
     \subfigure[\textbf{DSM using DC-IE with Tikhonov}]{
    \includegraphics[width=2in]{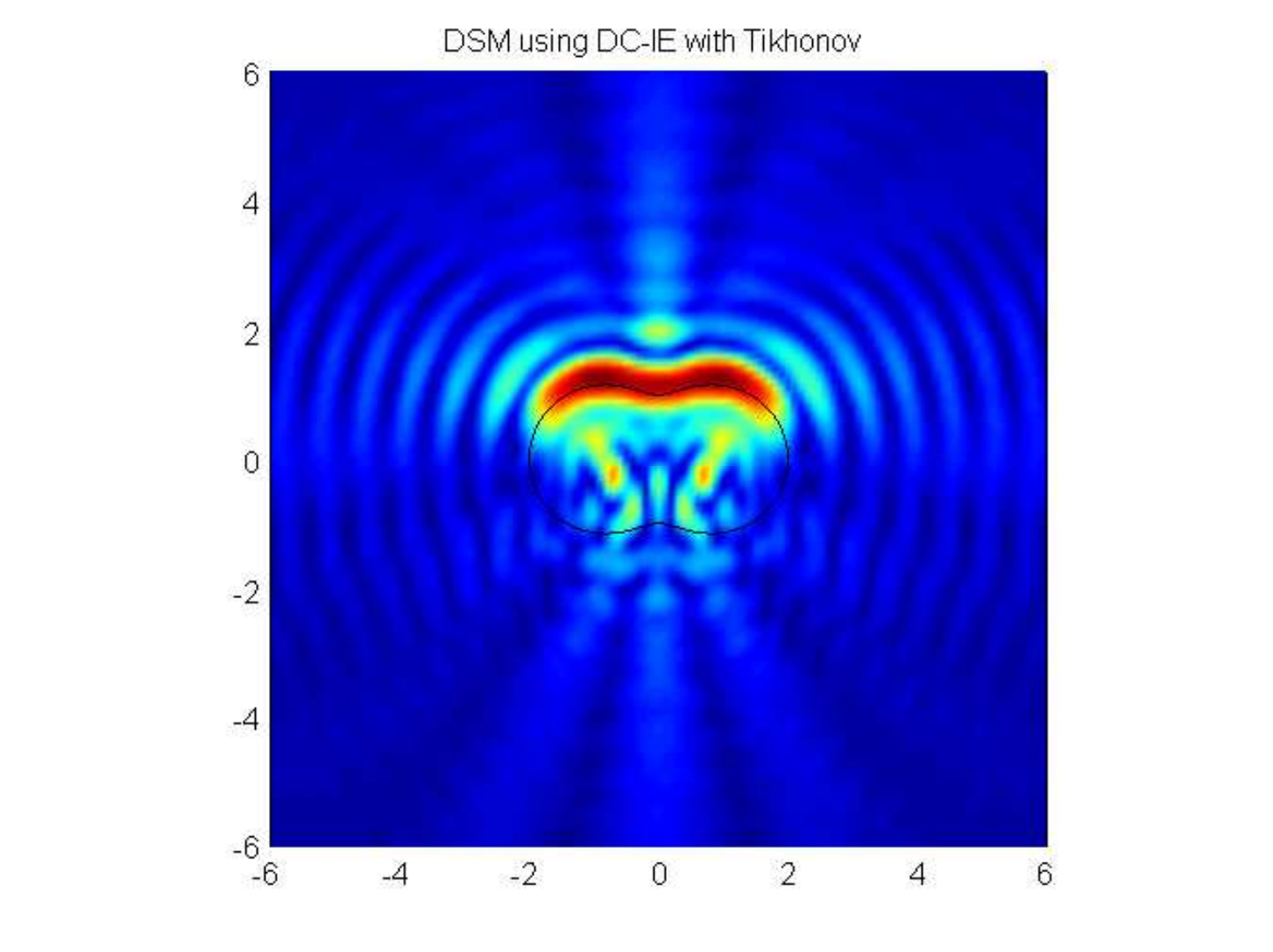}}\qquad
  \subfigure[\textbf{FM using DC-IE with Tikhonov}]{
    \includegraphics[width=2in]{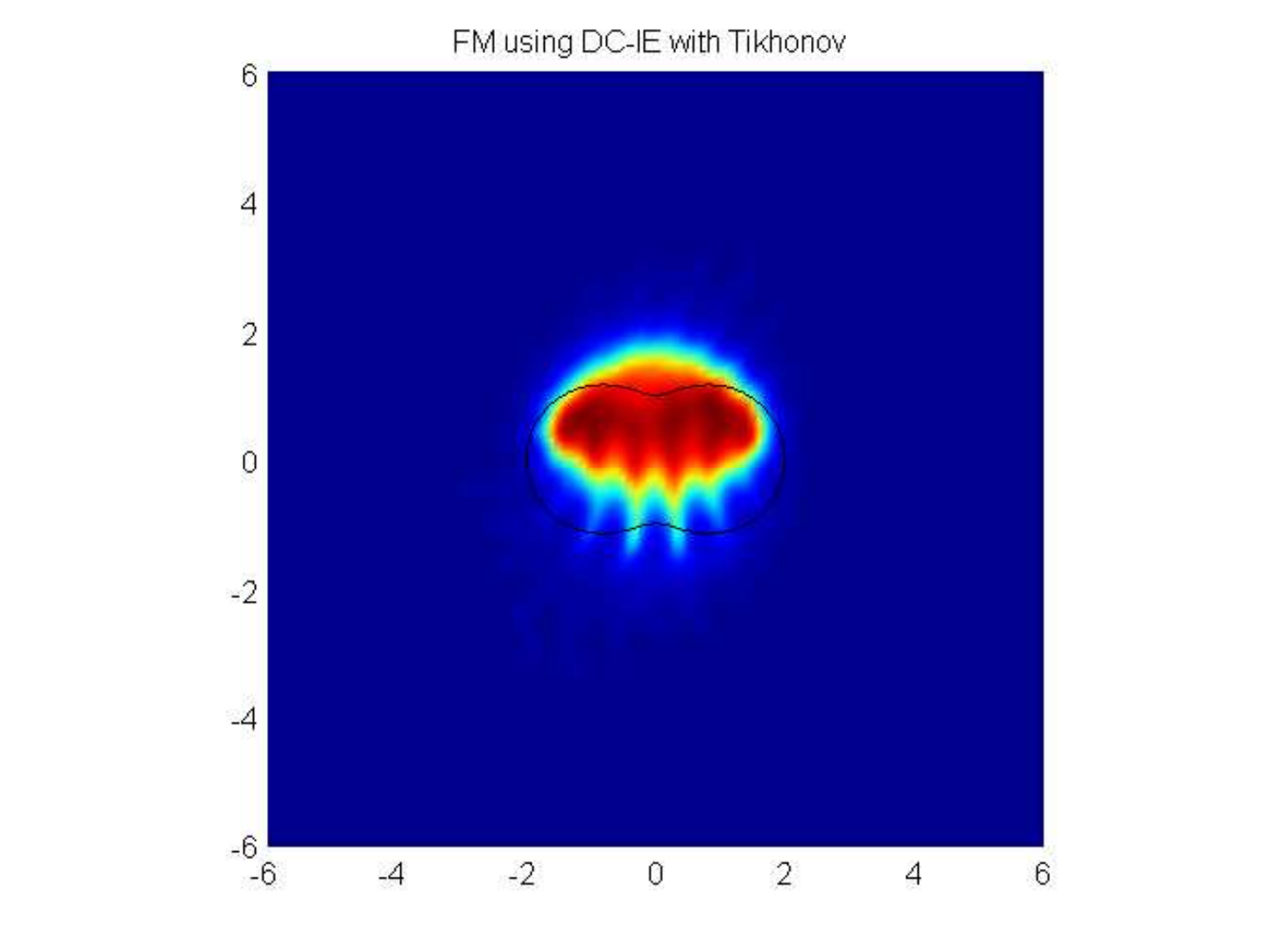}}
\caption{\bf Reconstructions of the sound-soft peanut using TSVD and Tikhonov regularization.}
\label{Peanut-D-2}
\end{figure}
\begin{figure}[htbp]
  \centering
\subfigure[\textbf{DSM using partial data directly}]{
    \includegraphics[width=2in]{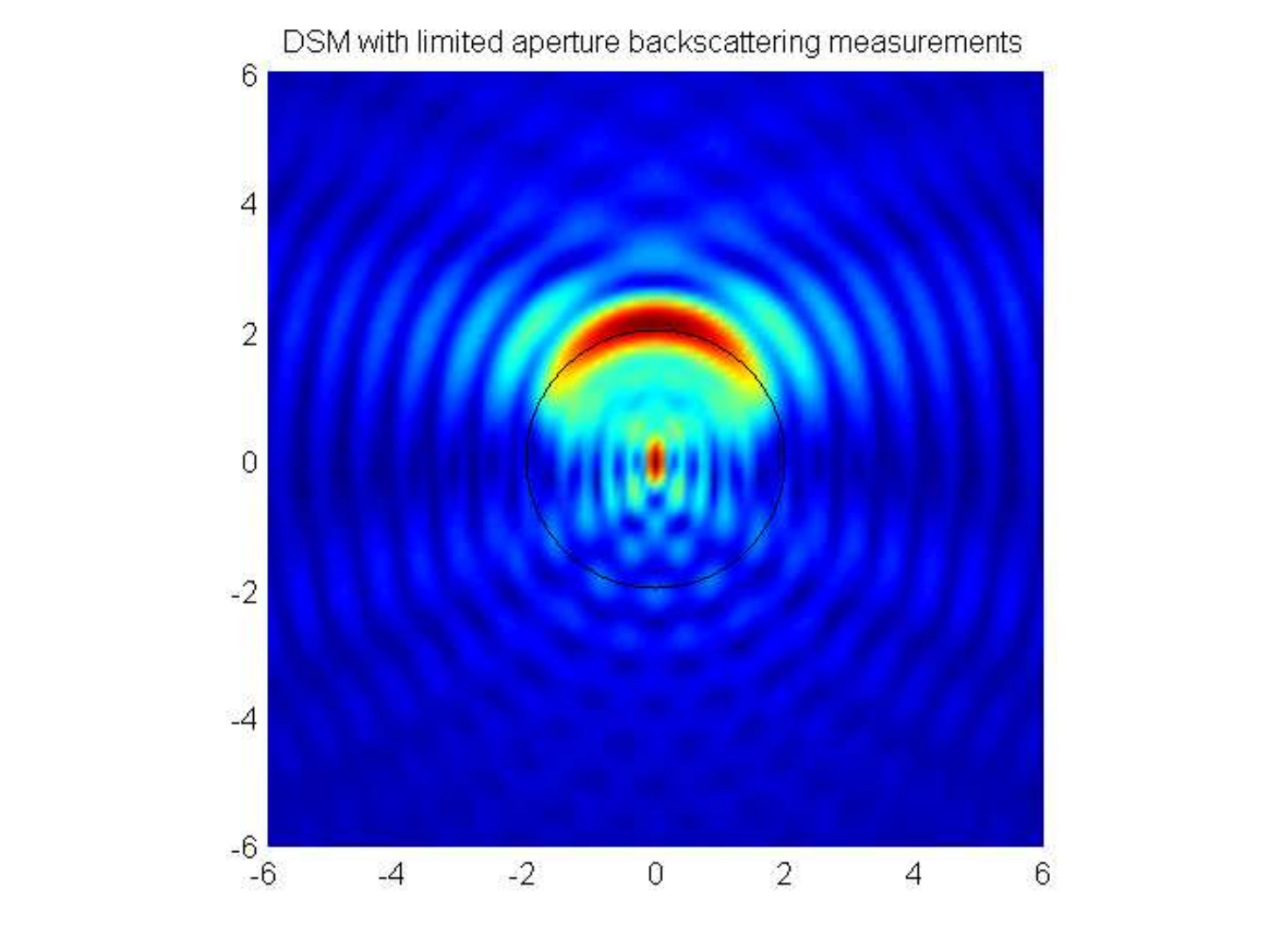}}
  \subfigure[\textbf{DSM with DC-FS}]{
    \includegraphics[width=2in]{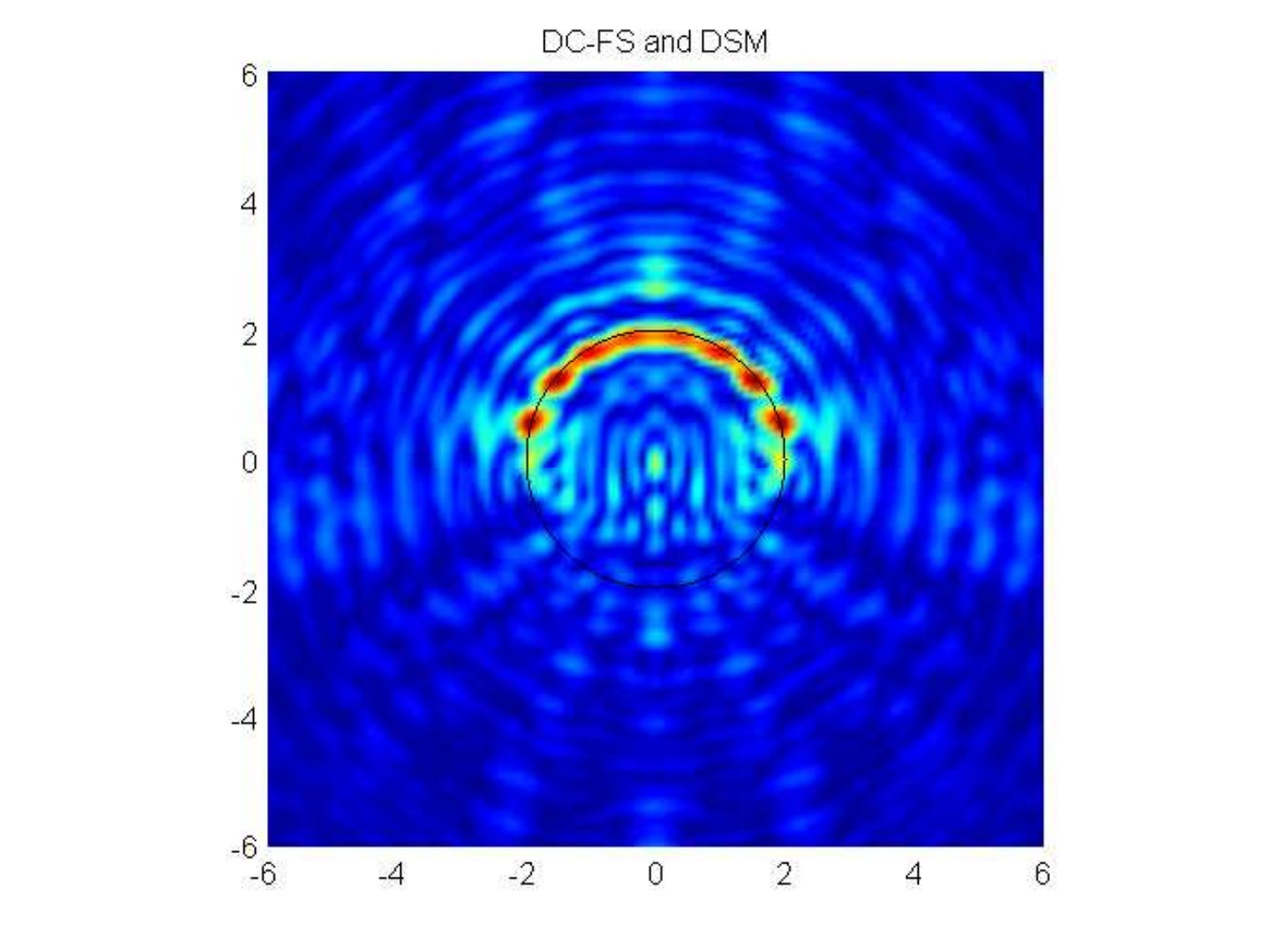}}\\
  \subfigure[\textbf{DSM with DC-IE}]{
    \includegraphics[width=2in]{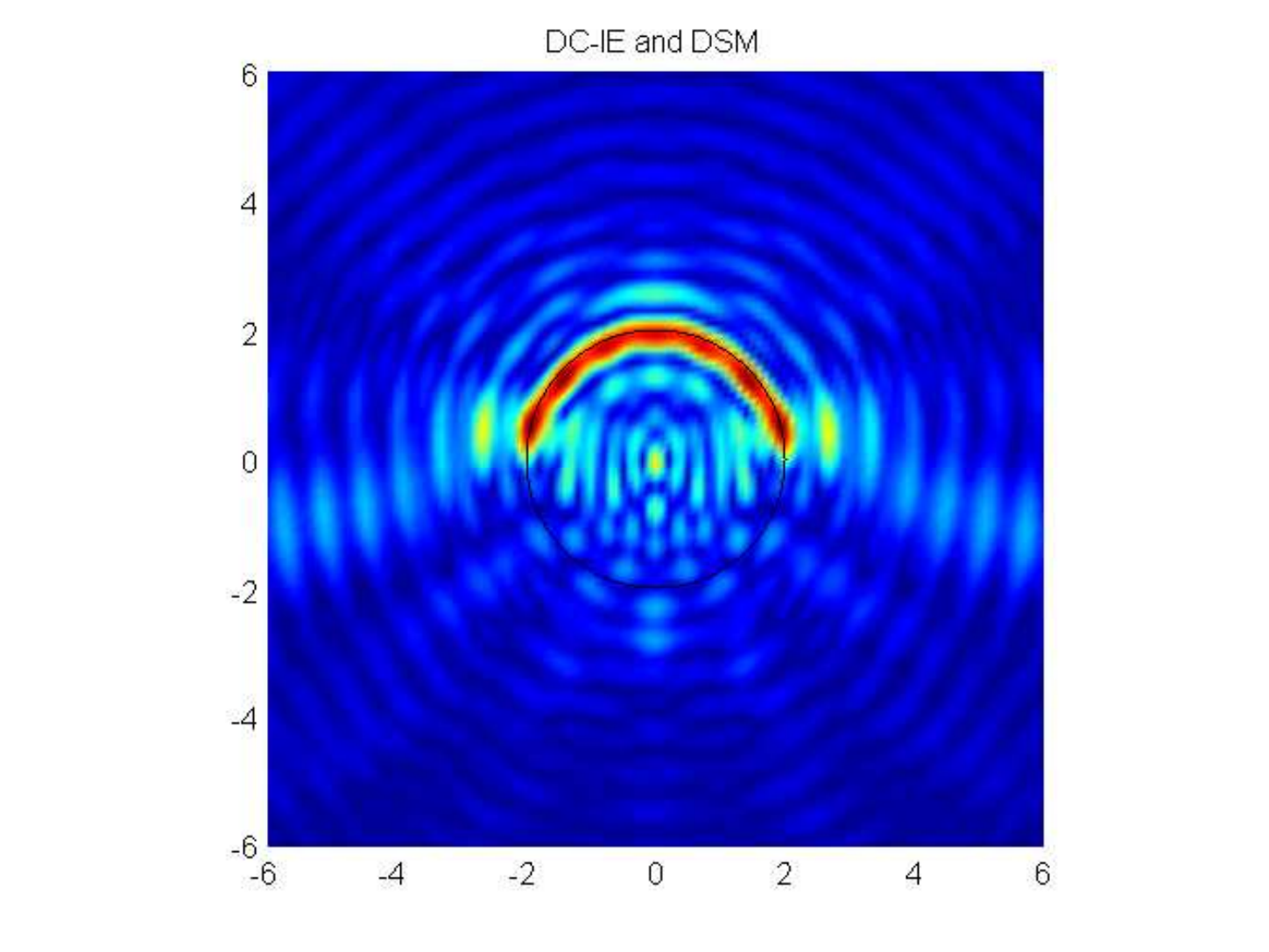}}
  \subfigure[\textbf{FM with DC-IE}]{
    \includegraphics[width=2in]{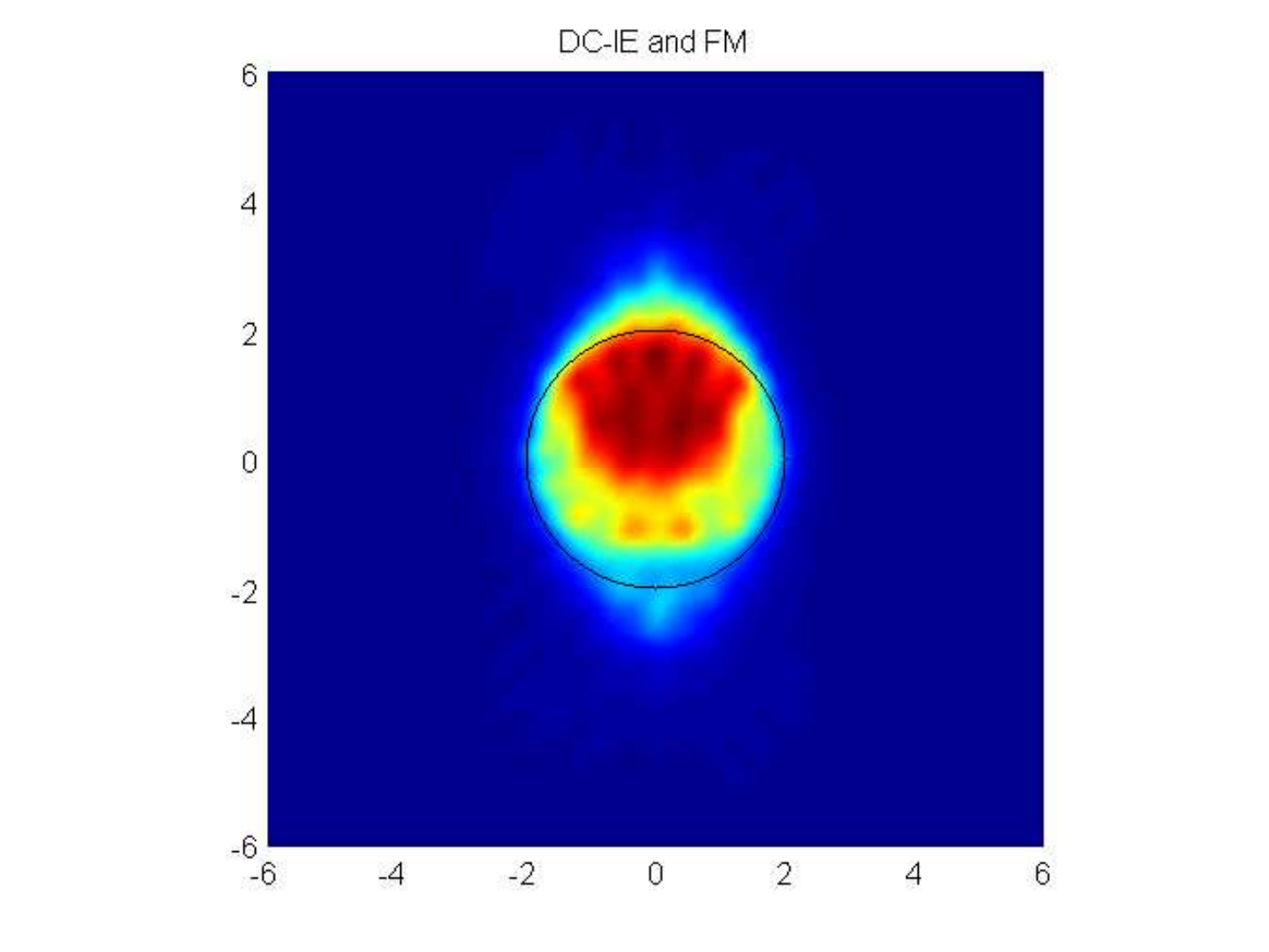}}
\caption{\bf Reconstructions of the sound-soft disk.}
\label{Disk-D}
\end{figure}

\begin{figure}[htbp]
  \centering
\subfigure[\textbf{DSM using partial data directly}]{
    \includegraphics[width=2in]{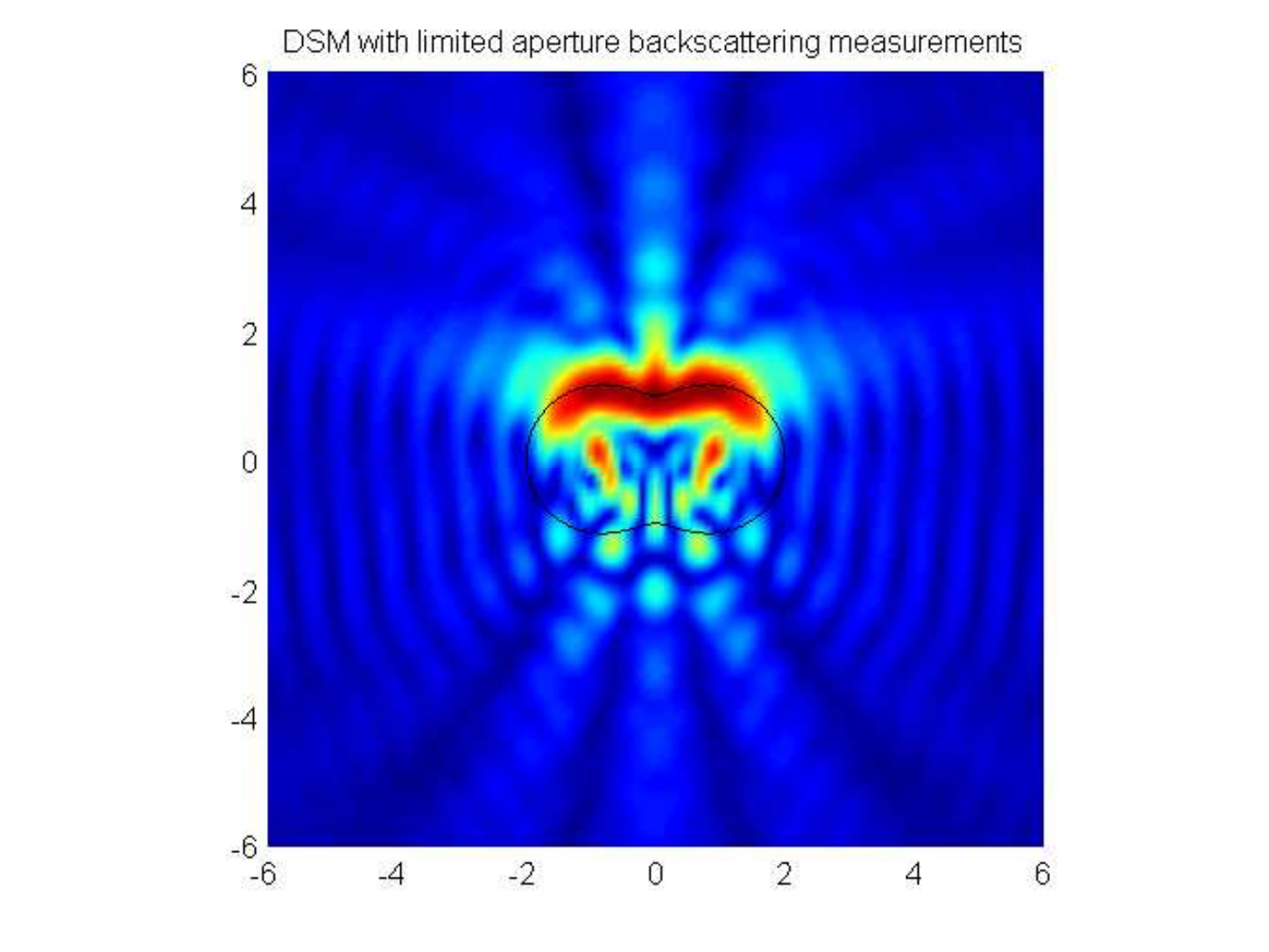}}
  \subfigure[\textbf{DSM with DC-FS}]{
    \includegraphics[width=2in]{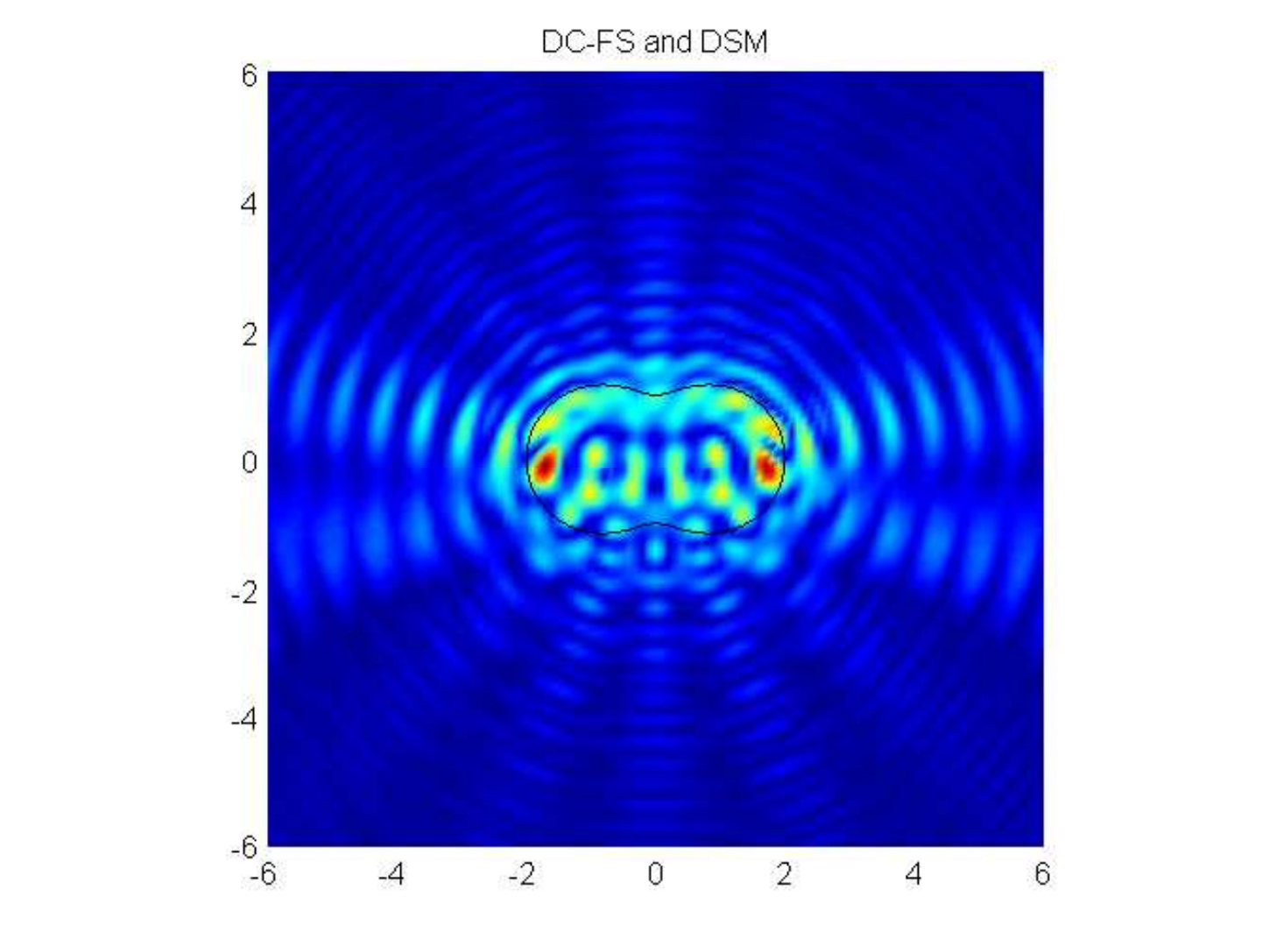}}\\
  \subfigure[\textbf{DSM with DC-IE}]{
    \includegraphics[width=2in]{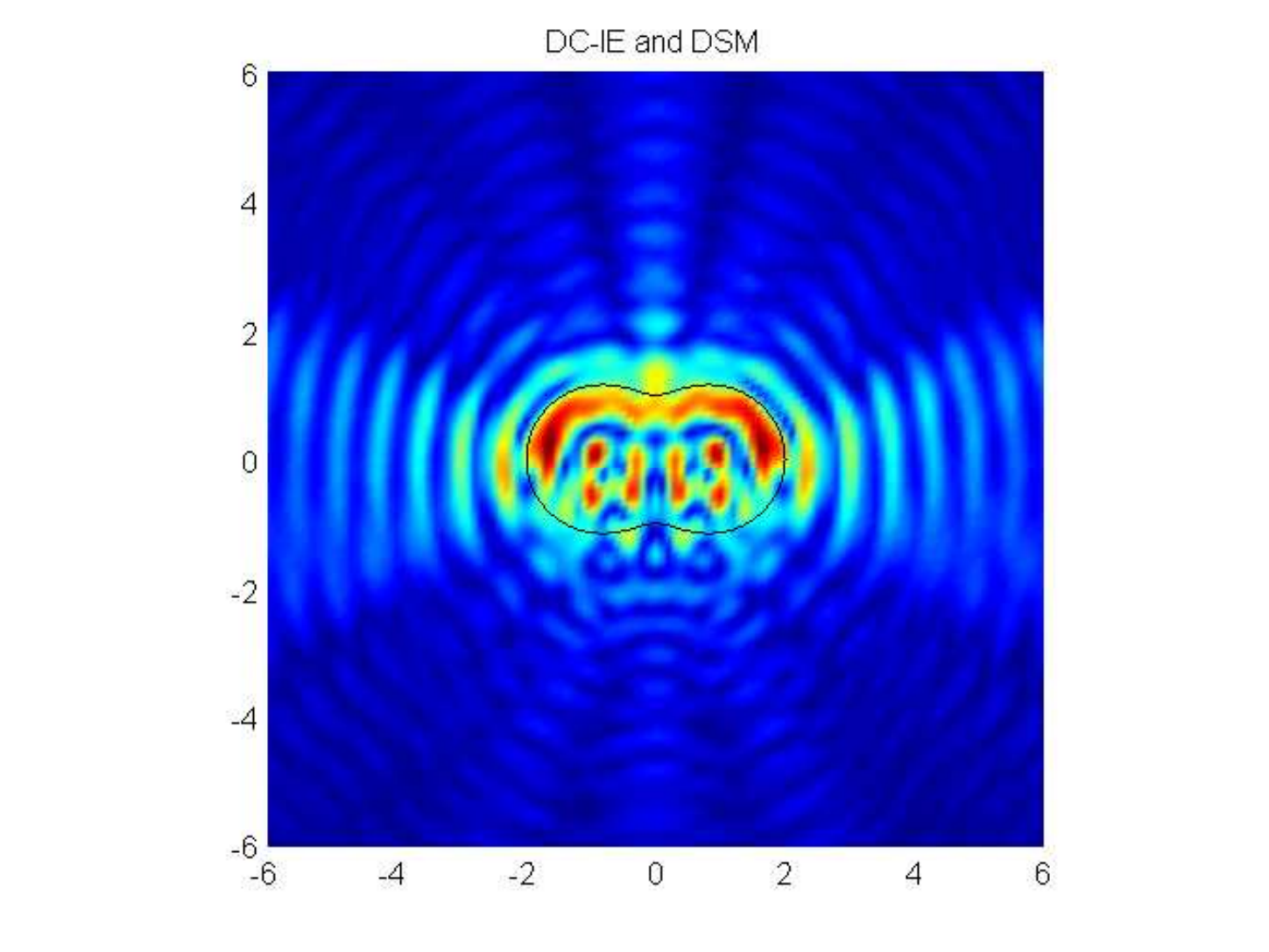}}
  \subfigure[\textbf{FM with DC-IE}]{
    \includegraphics[width=2in]{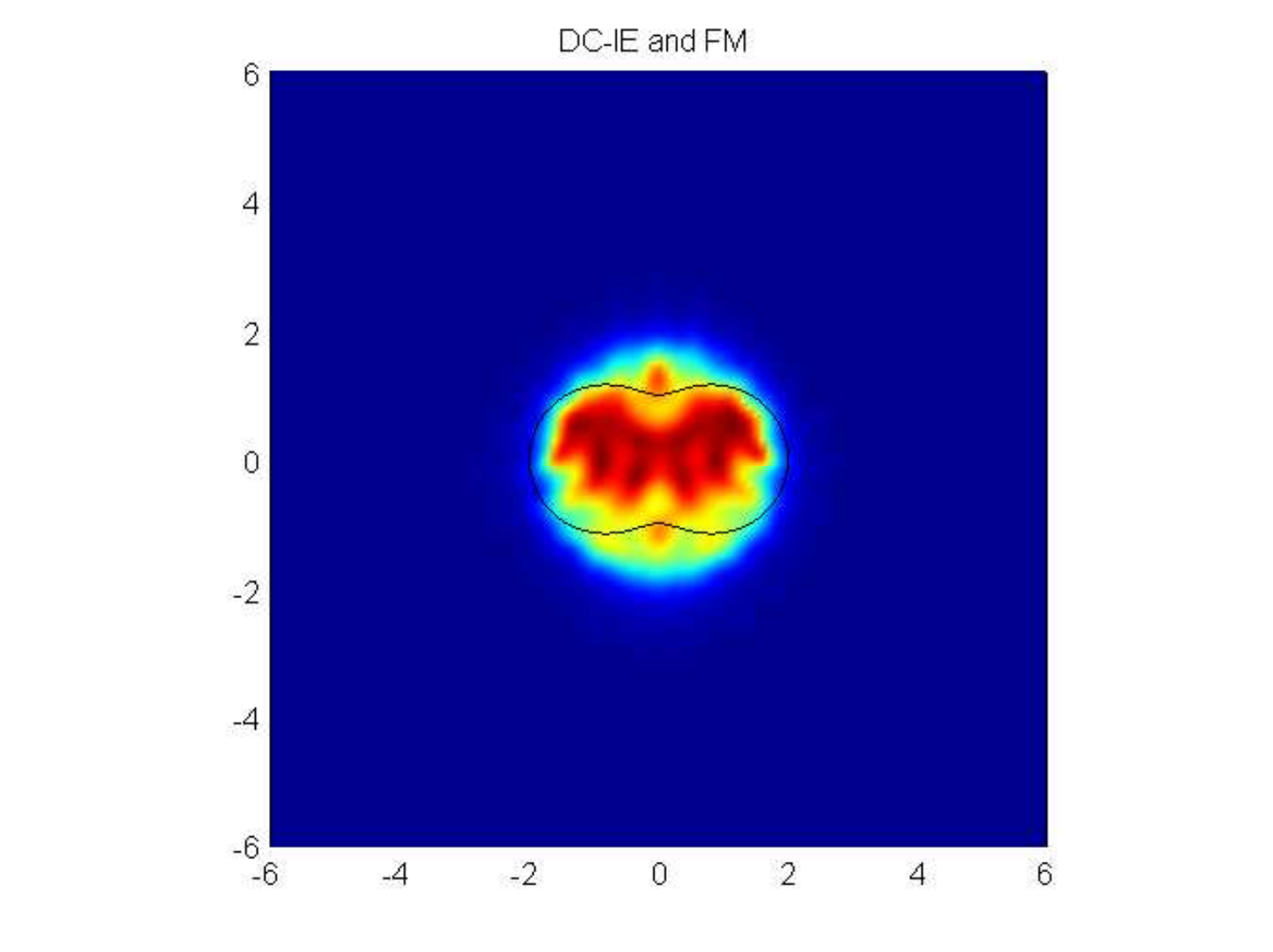}}
\caption{\bf Reconstructions of the sound-hard peanut.}
\label{Peanut-N}
\end{figure}

\begin{figure}[htbp]
  \centering
\subfigure[\textbf{DSM using partial data directly}]{
    \includegraphics[width=2in]{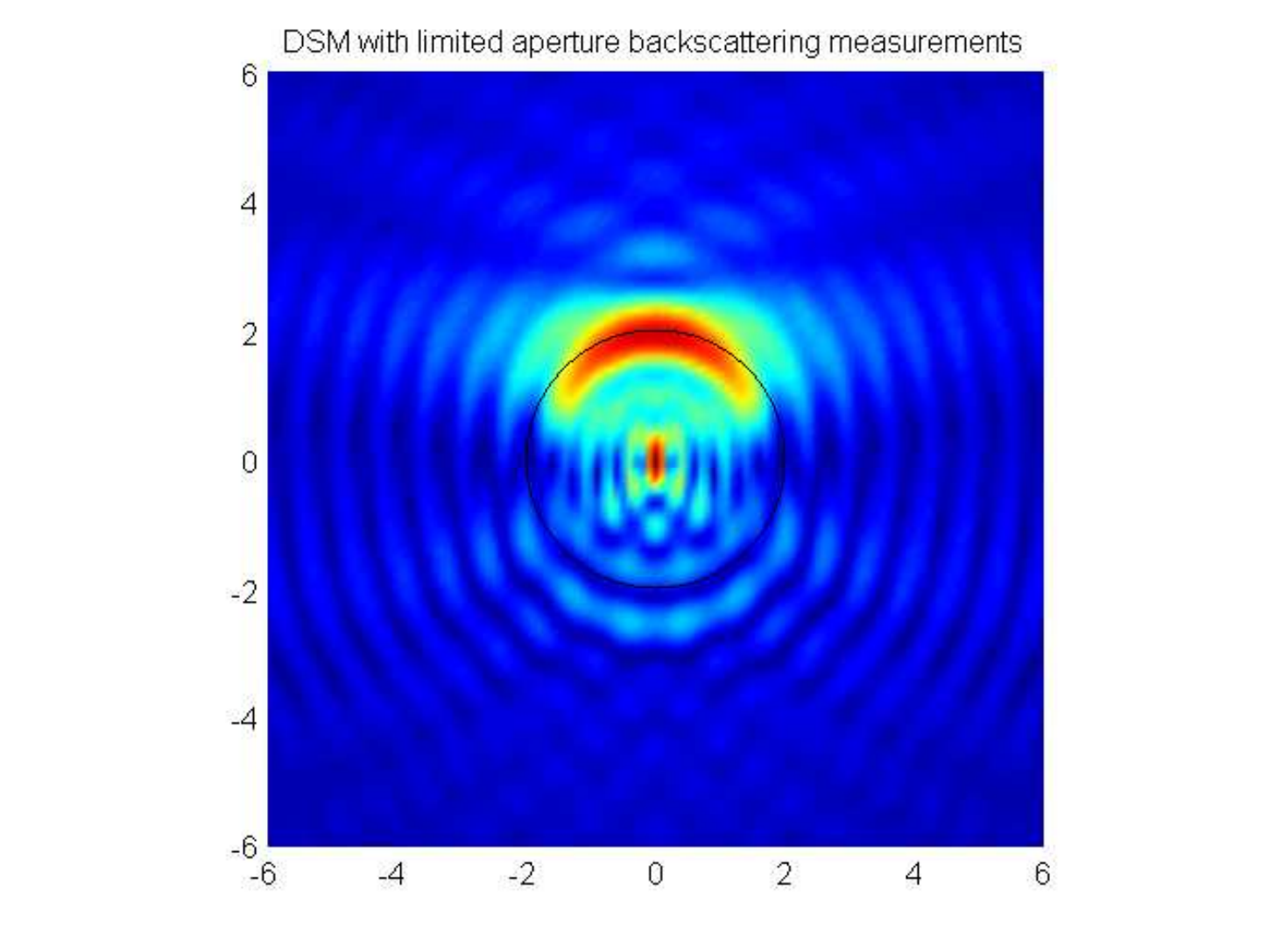}}
  \subfigure[\textbf{DSM with DC-FS}]{
    \includegraphics[width=2in]{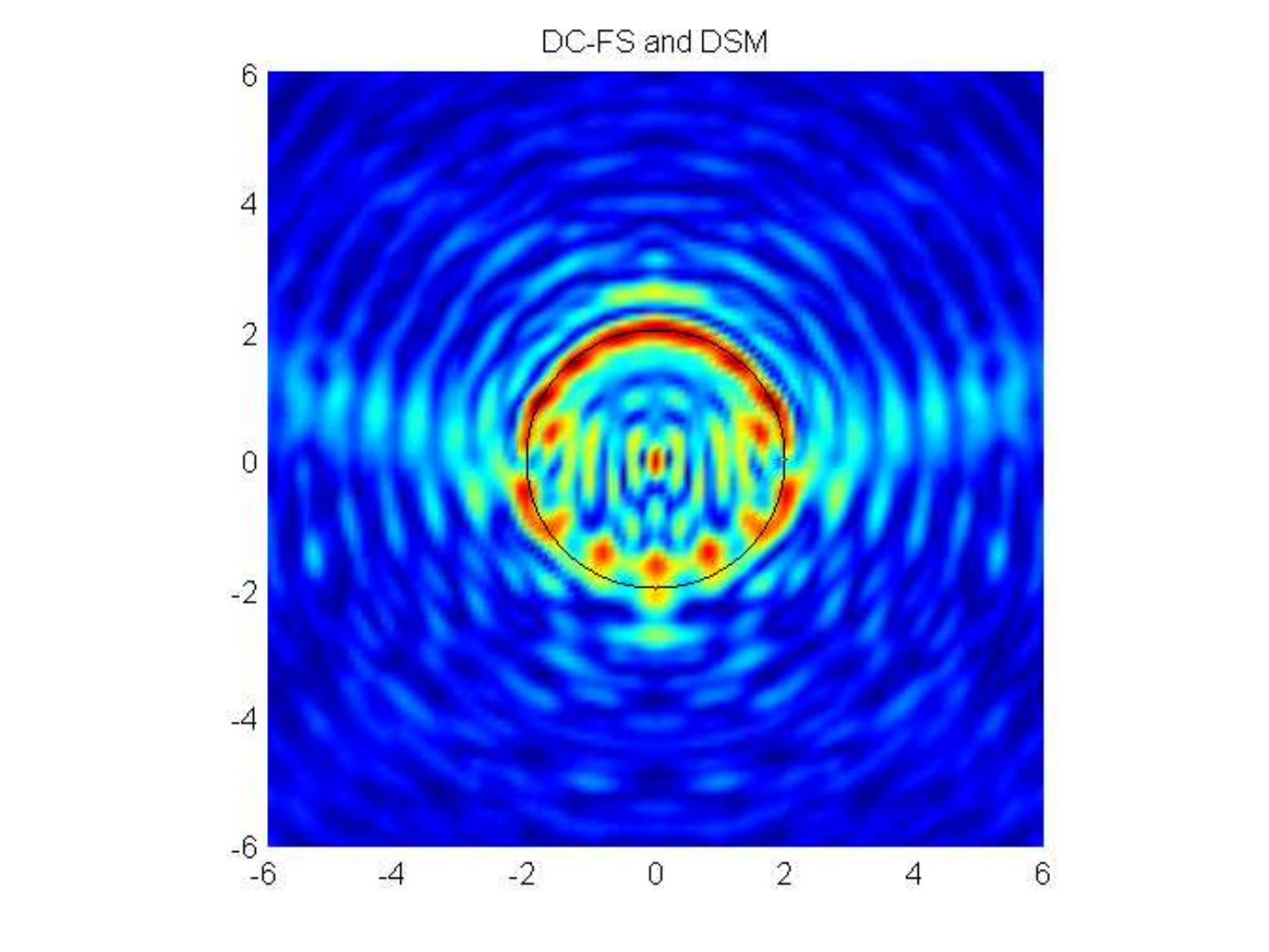}}\\
  \subfigure[\textbf{DSM with DC-IE}]{
    \includegraphics[width=2in]{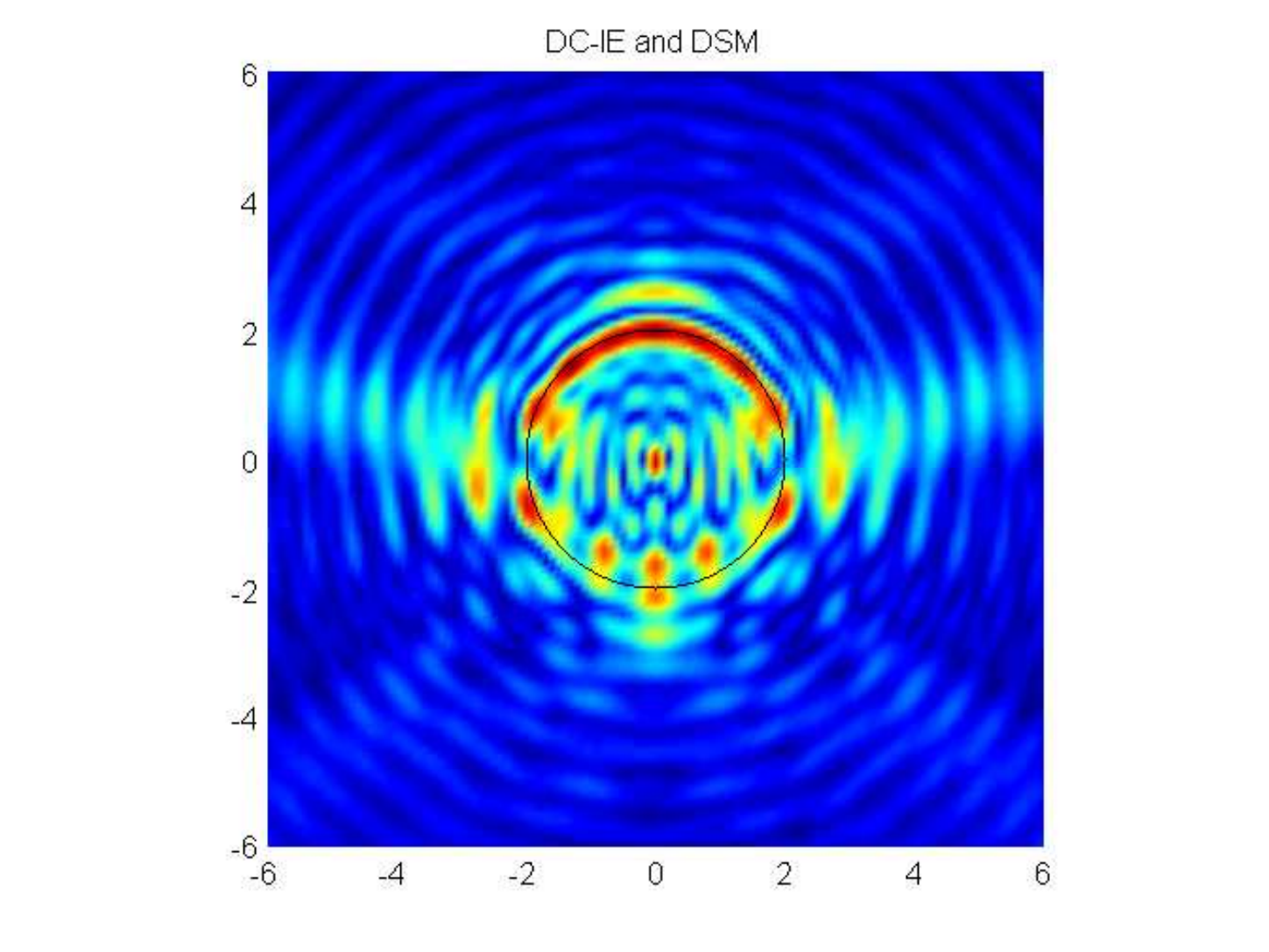}}
  \subfigure[\textbf{FM with DC-IE}]{
    \includegraphics[width=2in]{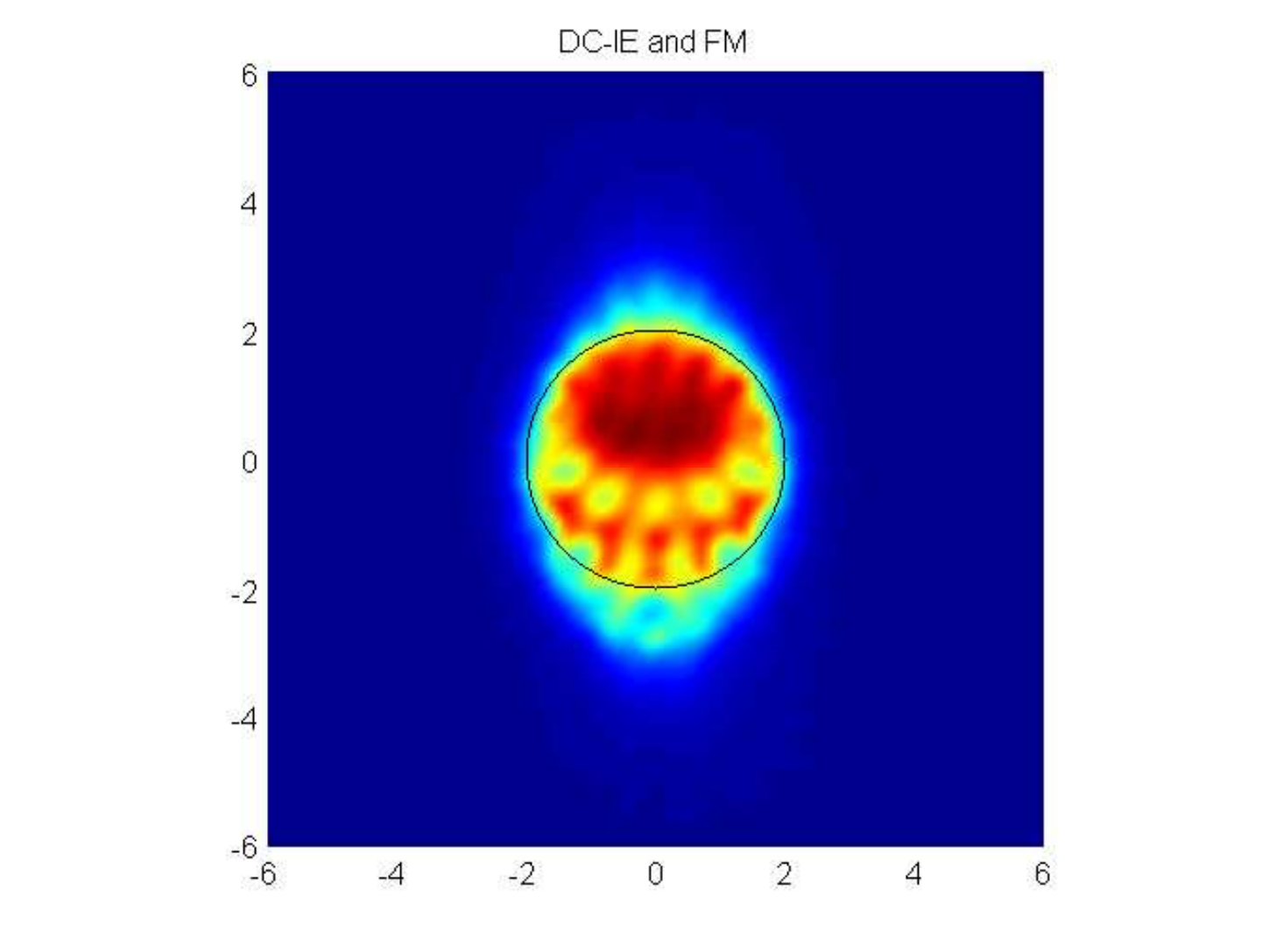}}
\caption{\bf Reconstructions of the sound-hard disk.}
\label{Disk-N}
\end{figure}

We conclude this section with some remarks. We have proposed two simple and fast data completion algorithms {\bf DC-FS} and {\bf DC-IE}. Both  algorithms involve the inversion of the highly ill-conditioned prolate matrix, which  indicate the ill-posedness nature of analytic continuation. The choice of $J$ and the regularization methods play an important role in the data completion algorithms. We have  illustrated the potential of the data completion algorithms and sampling methods.
Broadly speaking, the data completion algorithms are also applicable to many other imaging methods  with the reconstructed full-aperture data. Furthermore, it might still be possible to regularize the inverse of the prolate matrix in some other ways to potentially enhance the performance of the data completion algorithms and to  facilitate the corresponding resolution analysis.

\section*{Acknowledgement}
The research of F. Dou is supported by the NSFC (No. 12071061,11971093), the Applied Fundamental Research Program of Sichuan Province (No. 2020YJ0264), the Fundamental Research Funds for the Central Universities (No. ZYGX2019J094) and the Science  Strength Promotion Programme of UESTC. The research of X. Liu is supported by the NNSF of China grant 11971471 and the Youth Innovation Promotion Association, CAS. The research of B. Zhang is partially supported by the NNSF of China grant 91630309.

%

\bibliographystyle{SIAM}


\end{document}